\documentclass[preprint,3p,times]{elsarticle}
\RequirePackage{multirow,booktabs,subfigure,color,array,hhline,makecell}
\usepackage{amssymb}
\usepackage{amsmath}
\usepackage{graphicx}
\usepackage{subfigure}
\usepackage{amsthm}
\usepackage{bbm}
\usepackage{mathrsfs}
\usepackage{indentfirst}
\usepackage[colorlinks,citecolor=blue,urlcolor=blue]{hyperref}
\allowdisplaybreaks
\usepackage[table]{xcolor}
\usepackage{tikz-network}
\usepackage{pgf}
\usepackage{tikz}
\usepackage{subfigure}
\usepackage{matlab-prettifier}
\usepackage{tabularx}
\usepackage{ragged2e}
\newcolumntype{Y}{>{\RaggedRight\arraybackslash}X}

\usetikzlibrary{arrows, decorations.pathmorphing, backgrounds, positioning, fit, petri, automata}
\usetikzlibrary{shadows,arrows,positioning}
\RequirePackage{algorithm,algpseudocode}
\allowdisplaybreaks
\usepackage{changes}
\usepackage{caption}
\usepackage{bbm}

\newtheorem{thm}{Theorem}
\newtheorem{defin}{Definition}
\newtheorem{lem}{Lemma}
\newtheorem{assum}{Assumption}
\newtheorem{rem}{Remark}

\newtheorem{Ex}{Example}
\allowdisplaybreaks[4]
\AtBeginDocument{%
    \providecommand\BibTeX{%
        \normalfont B\kern-0.5em{\scshape i\kern-0.25em b}\kern-0.8em\TeX}}
\journal{~}
\begin{document}
\captionsetup[figure]{labelfont={bf},labelformat={default},labelsep=period,name={Fig.}}
\begin{frontmatter}
\title{How many asymmetric communities are there in multi-layer directed networks?}
\author[label]{Huan Qing\corref{cor}}
\ead{qinghuan@cqut.edu.cn$\&$qinghuan@u.nus.edu}
\address[label]{School of Economics and Finance, Chongqing University of Technology, Chongqing, 400054, China}
\cortext[cor]{Corresponding author.}
\begin{abstract}
Estimating the asymmetric numbers of communities in multi-layer directed networks is a challenging problem due to the multi-layer structures and inherent directional asymmetry, leading to possibly different numbers of sender and receiver communities. This work addresses this issue under the multi-layer stochastic co-block model, a model for multi-layer directed networks with distinct community structures in sending and receiving sides, by proposing a novel goodness-of-fit test. The test statistic relies on the deviation of the largest singular value of an aggregated normalized residual matrix from the constant 2. The test statistic exhibits a sharp dichotomy: Under the null hypothesis of correct model specification, its upper bound converges to zero with high probability; under underfitting, the test statistic itself diverges to infinity. With this property, we develop a sequential testing procedure that searches through candidate pairs of sender and receiver community numbers in a lexicographic order. The process stops at the smallest such pair where the test statistic drops below a decaying threshold. For robustness, we also propose a ratio-based variant algorithm, which detects sharp changes in the sequence of test statistics by comparing consecutive candidates. Both methods are proven to consistently determine the true numbers of sender and receiver communities under the multi-layer stochastic co-block model. 
\end{abstract}
\begin{keyword}
Goodness-of-fit test\sep multi-layer stochastic co-block model\sep multi-layer directed networks\sep community detection\
\end{keyword}
\end{frontmatter}
\section{Introduction}
Multi-layer directed networks have emerged as a fundamental representation for complex relational systems characterized by multiple, asymmetric interaction patterns. Such networks consist of a common set of nodes and multiple layers, each captured by a directed adjacency matrix. This structure naturally encodes two critical aspects of real-world relational data: the multiplicity of interaction contexts (layers) \citep{kivela2014multilayer,kim2015community,de2016physics,huang2021survey} and the inherent directionality of relationships within each context \citep{malliaros2013clustering,rohe2016co,su2024spectral,qing2025discovering}. By preserving both the variety of interaction types and their directional nature, multi-layer directed networks offer a richer and more faithful representation of complex systems than single-layer or undirected abstractions. Representative examples span diverse fields: international trade networks, where layers correspond to different commodities and directed edges represent export flows \citep{de2015structural}; brain connectivity studies, where layers reflect distinct cognitive tasks and edges model directed neural pathways \citep{bassett2011dynamic,bakken2016comprehensive}; and social communication systems, where separate layers may capture email correspondence, co-authorship, or online social interactions \citep{kim2015community,PengshengAOAS896,ji2022co,huang2021survey}.

A central problem in analyzing such networks is community detection—the identification of groups of nodes that exhibit similar connectivity patterns \citep{leicht2008community,fortunato2010community,malliaros2013clustering,fortunato2016community,javed2018community,paul2021null}. In directed networks, this task naturally decomposes into two complementary problems: identifying sender communities (nodes with similar outgoing connections) and receiver communities (nodes with similar incoming connections) \citep{rohe2016co, zhou2019analysis,wang2020spectral, zhang2022directed}. This distinction reflects the fundamental asymmetry of directed relationships, wherein a node's role in initiating interactions may be substantially different from its role in receiving them. The multi-layer stochastic co-block model (ML‑ScBM) considered in \citep{su2024spectral} provides a principled probabilistic framework for this setting. It extends the stochastic co-block model (ScBM) introduced in \citep{rohe2016co} for single-layer directed networks—which itself generalizes the classical stochastic block model (SBM) \citep{holland1983stochastic} by allowing separate community assignments for senders and receivers—to multiple layers. Notably, the ML-ScBM can also be viewed as a generalization of the multi-layer stochastic block model (ML-SBM) which is extensively studied in the network literature \citep{han2015consistent,paul2016consistent,paul2020spectral,lei2020consistent,xu2023covariate,lei2023bias,lei2024computational,qing2025communityEAAI} in recent years from the undirected to the directed setting. The ML‑ScBM assumes that each node maintains consistent sender and receiver community memberships across all layers, while the connection probabilities between communities can vary from layer to layer. This formulation achieves an elegant balance between flexibility (accommodating layer‑specific connectivity patterns) and parsimony (preserving a common community structure across layers), thereby naturally capturing asymmetric relational structures.

Under the ML‑ScBM, the development of methods for estimating community memberships from observed data remains an active area. A notable and effective approach is the spectral co‑clustering technique, which offers computational efficiency and theoretical guarantees. For instance, \citet{su2024spectral} developed a debiased spectral co‑clustering algorithm that aggregates information across layers via a bias‑adjusted sum of Gram matrices, followed by \(k\)-means applied to the leading eigenvectors. The work in \citep{su2024spectral} extends the bias‑adjusted spectral clustering framework introduced in \citet{lei2023bias} from multi‑layer undirected to multi‑layer directed networks. This method provides estimation consistency for community recovery, as is commonly established for spectral clustering approaches in community detection (see, e.g., \citep{qin2013regularized,lei2015consistency,SCORE,joseph2016impact,zhou2019analysis,paul2020spectral,wang2020spectral,qing2025communityASoC} to name a few). A common requirement shared by this and any prospective method for detecting asymmetric communities in multi-layer directed networks is the prior knowledge of the number of sender communities \(K_s\) and receiver communities \(K_r\). In practice, these quantities are rarely known in advance, making their estimation a fundamental prerequisite for any community detection procedure. This disconnect between methodological requirements and practical application underscores a pressing need for reliable, data‑driven techniques to determine these key parameters.

The problem of estimating the number of communities has been extensively investigated for simpler network models. For single‑layer undirected SBMs, a variety of approaches exist, including likelihood‑ratio tests \citep{LikeAos2017,ma2021determining}, Bayesian information criteria \citep{mcdaid2013improved,saldana2017many,hu2020corrected}, spectrum of the Bethe Hessian matrices \citep{CanMK2022,hwang2024estimation}, network cross-validation \citep{chen2018network,li2020network},  and goodness‑of‑fit tests \citep{fishkind2013consistent,bickel2016hypothesis,lei2016goodness,dong2020spectral,hu2021using,jin2023optimal,wu2024two,wu2024spectral}. However, a direct extension of these techniques to the multi‑layer directed setting faces several inherent challenges. First, the sender and receiver community numbers may differ, requiring their joint estimation rather than independent treatment. Second, the presence of multiple layers induces complex dependencies and necessitates the integration of information across layers in a manner that separates common community signals from layer‑specific noise. Third, the asymmetric nature of the network demands tools from non‑symmetric matrix analysis, moving beyond the symmetric spectral theory used for methods in undirected networks. Consequently, existing single‑layer methods are not directly applicable to the multi‑layer directed case, necessitating the development of novel methodologies for this complex setting.

This paper addresses this gap by introducing the first comprehensive framework for the joint, consistent estimation of asymmetric community numbers \(K_s\) and \(K_r\) in multi‑layer directed networks. Our approach is built upon a novel goodness‑of‑fit test that exhibits a sharp theoretical dichotomy: under a correctly specified model, the upper bound of the test statistic converges to zero in probability, whereas under any underfitted model, the statistic itself diverges to infinity. This dichotomy provides a rigorous statistical foundation for model selection. Leveraging this property, we design two sequential algorithms that efficiently search the two‑dimensional space of candidate pairs \((k_s, k_r)\). Both algorithms are computationally efficient and provably consistent under the ML‑ScBM with mild regularity conditions.

The main contributions of this work are as follows:

\begin{itemize}
    \item \textbf{A novel goodness‑of‑fit test for the ML‑ScBM.} We construct a test statistic based on the largest singular value of a normalized residual matrix and characterize its asymptotic behavior under both null (correctly specified) and alternative (underfitted) hypotheses. We establish that the upper bound of the statistic converges to zero under the null, while the statistic itself diverges to infinity under any form of underfitting, yielding a statistically principled criterion for model selection.

    \item \textbf{Two sequential selection procedures for joint community number estimation.} We develop two computationally efficient algorithms to search the two‑dimensional space of candidate community numbers. The first algorithm employs a level‑crossing rule with a data‑driven threshold, examining candidate pairs in a lexicographic order and stopping when the test statistic first falls below the threshold. The second algorithm employs a ratio‑based strategy: it computes the sequence of ratios of successive test statistics and selects the model at which this sequence exhibits a sharp rise, thereby identifying the transition from underfitted to adequate models.

    \item \textbf{Rigorous theoretical guarantees.} We prove the asymptotic properties of the test statistics and the consistency of both selection algorithms. The analysis hinges on two main theoretical developments. First, we derive sharp concentration bounds for the largest singular value of the aggregated, normalized residual matrix under the null hypothesis, leveraging non-symmetric random matrix theory tailored to the multi-layer directed setting. Second, under underfitting, we identify and quantify a deterministic low-rank signal—arising from the merging of distinct communities—that dominates the stochastic noise asymptotically, which is the key driver behind the test statistic's divergence. The consistency proofs for the sequential algorithms then build on this dichotomy by carefully choosing the threshold sequences to separate these two regimes. Furthermore, our theory covers settings where community numbers grow slowly with network size, and provides explicit conditions on threshold sequences that ensure correct stopping.
\end{itemize}

The remainder of this paper is structured as follows. Section~\ref{sec:model} formally introduces the multi‑layer stochastic co‑block model and states the community‑number estimation problem. Section~\ref{sec:test} develops the goodness‑of‑fit test and establishes its asymptotic properties. Section~\ref{sec:estimation} presents the sequential selection approaches and proves their estimation consistency. Section~\ref{sec:simulation} reports experimental studies that validate the theoretical findings and investigates the performance of the proposed methods. Section~\ref{sec:conclusion} concludes. All technical proofs are provided in the Appendix.
\section{Model and problem stepup}\label{sec:model}
This section formally defines the multi-layer stochastic co-block model (ML-ScBM) and precisely states the core estimation problem. We specify the ML-ScBM's generative mechanism, which incorporates distinct sender and receiver community memberships across multiple layers to model asymmetric edge directionality. The central problem addressed is the joint estimation of the unknown sender and receiver community counts \((K_s, K_r)\) from observed multi-layer directed network data. Necessary theoretical assumptions for establishing theoretical guarantees are also presented in this section.
\subsection{Multi-layer stochastic co-block model (ML-ScBM)}
We study a multi-layer directed network with \(L\) layers on \(n\) vertices. Each layer \(\ell\) is represented by an adjacency matrix \(A^{(\ell)} \in \{0,1\}^{n \times n}\), where \(A^{(\ell)}_{ii}=0\) and \(A^{(\ell)}_{ij}=1\) signifies a directed edge sending from node \(i\) to node \(j\) in layer \(\ell\).  
The \emph{multi-layer stochastic co-block model} (ML-ScBM) partitions the \(n\) vertices into \(K_s\) \emph{sender communities} and \(K_r\) \emph{receiver communities} across all layers. Formally, the model is specified as follows:
\begin{defin}[Multi-layer stochastic co-block model (ML-ScBM)]\label{def:ML-ScBM}
Consider a multi-layer directed network with \(L\) layers and \(n\) nodes per layer. Let \(A^{(\ell)} \in \{0,1\}^{n \times n}\) be the adjacency matrix for layer \(\ell\), where \(A^{(\ell)}_{ii} = 0\) and \(A^{(\ell)}_{ij} = 1\) indicates a directed edge from node \(i\) to node \(j\) in layer \(\ell\). The multi-layer stochastic co-block model (ML-ScBM) is parameterized by:
\begin{itemize}
    \item Sender community labels: \(g^s \in \{1,\dots,K_s\}^n\) (common across layers).
    \item Receiver community labels: \(g^r \in \{1,\dots,K_r\}^n\) (common across layers).
    \item Layer-specific block probability matrices: \(B^{(\ell)} \in [0,1]^{K_s \times K_r}\) for \(\ell = 1,\dots,L\).
\end{itemize}

Given \((g^s, g^r, \{B^{(\ell)}\}_{\ell=1}^L)\), the entries of \(A^{(\ell)}\) are independent with
\[
\mathbb{P}(A^{(\ell)}(i,j)= 1) = B^{(\ell)}(g^s(i), g^r(j)), \quad \forall i \neq j, \quad \ell = 1,\dots,L.
\]

The expected adjacency matrix \(\Omega^{(\ell)}\) for layer \(\ell\) satisfies \(\Omega^{(\ell)}(i,j) = B^{(\ell)}(g^s(i), g^r(j))\) for \(i \neq j\) and \(\Omega^{(\ell)}(i,i)= 0\).
\end{defin}

The sender-receiver asymmetry inherent in the ML‑ScBM—captured by the distinct membership vectors \(g^s\) and \(g^r\) and the possibly different numbers \(K_s\) and \(K_r\)—provides a flexible framework for modeling asymmetric relational patterns in multi‑layer directed networks. This generality allows the model to naturally incorporate direction‑specific community structures that are often observed in real‑world systems. The ML‑ScBM contains several well‑studied network models as special cases, thereby situating our work within a broader literature. When the sender and receiver memberships coincide (\(g^s = g^r\)) and every block probability matrix \(B^{(\ell)}\) is symmetric, the model reduces to the multi‑layer stochastic block model (ML‑SBM) studied in \citep{han2015consistent,paul2016consistent,paul2020spectral,lei2020consistent,xu2023covariate,lei2023bias,lei2024computational,qing2025communityEAAI}. In the single‑layer setting (\(L=1\)), the ML‑ScBM specializes to the stochastic co‑block model (ScBM) introduced by \citep{rohe2016co}. Finally, when \(L=1\) and the network is undirected, the model degenerates to the classical stochastic block model (SBM) of \citep{holland1983stochastic}. Thus, the ML‑ScBM offers a unified probabilistic framework that simultaneously generalizes the ML‑SBM, the ScBM, and the SBM, while preserving the essential directional asymmetry required for analyzing multi‑layer directed interactions.
\subsection{Problem statement: joint estimation of asymmetric community numbers}
In this paper, we address the fundamental problem of \emph{asymmetric community numbers estimation} in ML-ScBMs: given a multi-layer directed network, how can we jointly determine the numbers of sender communities \(K_s\) and receiver communities \(K_r\)? We formulate this as a sequential goodness-of-fit testing problem: for candidate pairs \((K_{s0}, K_{r0})\), we test
\[
H_0: (K_s, K_r) = (K_{s0}, K_{r0}) \quad \text{versus} \quad H_1: K_s > K_{s0} \text{ or } K_r > K_{r0},
\]
where the alternative hypothesis \(H_1\) explicitly encodes \emph{underfitting} scenarios where the hypothesized model lacks sufficient sender or receiver communities. This formulation is statistically challenging and practically significant for three reasons:
\begin{enumerate}
    \item The bivariate nature of the community structure requires a combinatorial search over candidate pairs \((k_s, k_r)\). When an upper bound \(K_{\mathrm{cand}}\) is set for the candidate community numbers as will be specified in Section \ref{sec3point1}, the size of this search space is \(K^{2}_{\mathrm{cand}}\), making efficient testing strategies essential.
    \item Underfitting (failing to reject \(H_0\) when \(H_1\) holds) leads to loss of structural resolution, while overfitting is mitigated algorithmically via ordered search.
    \item Existing goodness-of-fit tests for undirected networks rely on symmetric eigenvalue distributions that do not extend to the asymmetric singular value decompositions required for directed networks, and multi-layer extensions are non-trivial.
\end{enumerate}

Given that few existing methods provide theoretically guaranteed joint estimation of \((K_s, K_r)\) in ML-ScBM for multi-layer directed networks, this work bridges this gap by developing a testing framework based on singular value tail bounds of normalized residual matrices. Our approach leverages the asymptotic behavior of the largest singular value under \(H_0\) versus \(H_1\), enabling consistent community number estimation.
\subsection{Technical assumptions}\label{subsec:assumptions}
To establish theoretical guarantees, we require the following regularity conditions.

\begin{assum}\label{assump:a1}
For each layer \(\ell\), the block probability matrix satisfies \(\delta \leq B^{(\ell)}(k,l) \leq 1 - \delta\) for all \(k,l\), and some \(\delta \in(0,\frac{1}{2})\). 
\end{assum}
Assumption \ref{assump:a1} ensures well-defined variance in the residual matrix defined later and excludes degenerate cases where normalization becomes unstable. 

\begin{assum}\label{assump:a2}
The community sizes satisfy:
\[
\min_{k=1,\dots,K_s} |\{i: g^s(i) = k\}| \geq c_0 \frac{n}{K_s}, \quad 
\min_{l=1,\dots,K_r} |\{j: g^r(j) = l\}| \geq c_0 \frac{n}{K_r}
\]
for some \(c_0 > 0\). 
\end{assum}
Assumption \ref{assump:a2} prevents any community from being asymptotically negligible, guaranteeing that the sample size within every block grows linearly with \(n\). Note that Assumptions \ref{assump:a1} and \ref{assump:a2} are standard in the community detection literature \citep{lei2016goodness,hu2020corrected,hu2021using,wu2024two,wu2024spectral}, representing conventional regularity conditions for estimating the number of communities.
\begin{assum}\label{assump:a3}
$K_{\mathrm{max}}$ and \(L\) satisfy
\[
\frac{K^{2}_{\max} L\log n}{n} \to 0 \quad \text{as} \quad n \to \infty,
\]
where \(K_{\max} = \max(K_s,K_r)\).
\end{assum}

Assumption \ref{assump:a3} governs the asymptotic growth rates of the number of communities and layers relative to the network size. This condition, $\frac{K_{\max}^2 L \log n}{n} \to 0$, is fundamental for ensuring that the model complexity remains manageable as the sample size increases, which is essential for establishing the consistency of the estimation procedure. Its specific form arises directly from the convergence analysis of the normalized residual matrix $\hat{R}$: the spectral-norm error $\|\hat{R} - R\|$ is of order $O_P\bigl(K_{\max}\sqrt{L\log n / n}\bigr)$ provided by Lemma \ref{lem:norm-error} in the appendix, and squaring this bound yields the precise condition needed for the test statistic $\hat{T}_n$ to exhibit its required sharp dichotomy. Here, the term $K_{\max}^2$ captures the complexity from asymmetric sender and receiver communities, the factor $L$ accounts for the linear growth in variance when aggregating residuals across layers, and the $\log n$ factor originates from the concentration inequalities controlling maximum deviations. In practice, when the number of layers $L$ is fixed—a common scenario in multi-layer directed network studies—this condition simplifies to the standard single-layer scaling $K_{\max}^2 \log n / n \to 0$; when $L$ grows, it ensures the additional variance from multiple layers does not obscure the community signal. Thus, Assumption \ref{assump:a3} serves as a necessary scaling law that extends classical community detection theory to multi-layer directed networks, providing the foundation for the theoretical guarantees of the proposed estimation procedures.

\begin{assum}\label{assump:a4}
The community detection algorithm \(\mathcal{M}\) used is consistent under \(H_0\), i.e.,
\[
\mathbb{P}(\hat{g}^s = g^s) \to 1, \quad \mathbb{P}(\hat{g}^r = g^r) \to 1 \quad \text{as} \quad n \to \infty,
\]
where \(\hat{g}^s\) and \(\hat{g}^r\) are the estimated sending and receiving community label vectors returned by \(\mathcal{M}\)  with $K_s$ sending community numbers and $K_r$ receiving community numbers.
\end{assum}

Assumption \ref{assump:a4} is standard in theoretical analyses of community detection methods, analogous to the consistency requirement in \citep{lei2016goodness,wu2024spectral}, and is essential for establishing the asymptotic properties of the proposed goodness-of-fit test. In our context, this assumption ensures that the estimated community labels \(\hat{g}^s, \hat{g}^r\) converge to the true labels \(g^s, g^r\) with high probability as \(n\) grows under \(H_0\). The consistency guarantees that the plug-in estimates of the block probability matrices and the resulting residual matrix \(\hat{R}\) are sufficiently close to their oracle counterparts, thereby enabling the derivation of the sharp dichotomy of the test statistic—convergence to zero under the null and divergence under underfitting—without being obscured by label estimation errors.
\section{A spectral-based goodness-of-fit test}\label{sec:test}
This section develops a theoretically grounded goodness-of-fit test for ML-ScBM. We first introduce an ideal test statistic using oracle parameters, then derive its practical counterpart with estimated parameters, and finally establish its asymptotic behavior under both null and alternative hypotheses. The core innovation lies in leveraging singular value tail bounds of normalized residual matrices to detect community underfitting.
\subsection{Oracle test statistic and its asymptotics}
To formalize the test, we begin with the \emph{ideal residual matrix} \(R\) constructed using true parameters:
\[
R(i,j) = 
\begin{cases} 
\dfrac{\sum_{\ell=1}^L \left(A^{(\ell)}(i,j) - \Omega^{(\ell)}(i,j)\right)}{\sqrt{(n-1) \sum_{\ell=1}^L \Omega^{(\ell)}(i,j) (1 - \Omega^{(\ell)}(i,j))}} & i \neq j, \\
0 & i = j,
\end{cases}
\]
where \(\Omega^{(\ell)}(i,j) = B^{(\ell)}(g^s(i), g^r(j))\) is the true edge probability in layer \(\ell\). This normalization ensures \(\mathbb{E}[R(i,j)] = 0\) and \(\operatorname{Var}(R(i,j)) = \frac{1}{n-1}\) for \(i \neq j\), transforming \(R\) into a generalized random non-symmetric matrix with controlled variance. The ideal test statistic is defined as
\begin{align}\label{idealTestStatistic}
T_n = \sigma_1(R) - 2,
\end{align}
where \(\sigma_1(\cdot)\) denotes the largest singular value. The shift by 2 accounts for the asymptotic behavior of \(\sigma_1(R)\) under \(H_0\), as established below.
\begin{lem}[Asymptotic behavior of $T_n$]\label{ideal0}
When Assumptions \ref{assump:a1} and \ref{assump:a3} hold, for any $\epsilon > 0$, we have
\[
\mathbb{P}(T_n < \epsilon) \to 1 \quad \text{as} \quad n \to \infty.
\]
\end{lem}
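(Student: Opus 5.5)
We need to show \(\sigma_1(R)\le 2+\epsilon\) with probability tending to one. The plan is to treat \(R\) as an \(n\times n\) non-symmetric random matrix with independent, centered entries of variance \(1/(n-1)\). Its entries are uniformly bounded by a quantity that vanishes with \(n\), and we then invoke a sharp spectral-norm bound for such matrices. Under Assumption \ref{assump:a1}, each \(\Omega^{(\ell)}(i,j)(1-\Omega^{(\ell)}(i,j))\ge \delta(1-\delta)\ge \delta/2\) for \(i\neq j\). Hence the denominator of \(R(i,j)\) is at least \(\sqrt{(n-1)L\delta/2}\).

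Write \(R(i,j)=X_{ij}/\sqrt{n-1}\), where
\[
X_{ij}=\frac{\sum_{\ell}(A^{(\ell)}(i,j)-\Omega^{(\ell)}(i,j))}{\sqrt{\sum_\ell \Omega^{(\ell)}(i,j)(1-\Omega^{(\ell)}(i,j))}}.
\]
The \(X_{ij}\) (\(i\neq j\)) are independent across pairs, since layers and entries are independent. Each has mean zero and variance exactly one.

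\textbf{Key steps.} First, I would verify the moment conditions. Each \(X_{ij}\) is a normalized sum of \(L\) independent bounded variables, so \(|X_{ij}|\le L/\sqrt{L\delta/2}=\sqrt{2L/\delta}\). By Bernstein/Hoeffding, \(X_{ij}\) is sub-Gaussian with a constant depending only on \(\delta\), uniformly in \(L\). Consequently \(\max_{i\neq j}|R(i,j)|=O_P(\sqrt{\log n/n})\). Alternatively, one can use the deterministic bound \(\sqrt{2L/(\delta(n-1))}\), which tends to \(0\) by Assumption \ref{assump:a3}.

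Second, I would apply a sharp nonasymptotic spectral-norm bound for matrices with independent, non-identically distributed, centered entries. The natural choice is Bandeira--van Handel (and its rectangular or non-symmetric version, e.g.\ via the Hermitian dilation). It gives
\[
\mathbb{E}\|R\|\le (1+\eta)\left(\max_i\sqrt{\textstyle\sum_j \mathbb{E}R(i,j)^2}+\max_j\sqrt{\textstyle\sum_i \mathbb{E}R(i,j)^2}\right)+C_\eta\,\sigma_*\sqrt{\log n}.
\]
Here both row and column variance sums equal \(1\), since each row has \(n-1\) off-diagonal entries of variance \(1/(n-1)\). For \(\sigma_*\) I would take the uniform entry bound, after truncation if one uses the sub-Gaussian route. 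This yields \(\mathbb{E}\sigma_1(R)\le 2(1+\eta)+C_\eta\sqrt{L\log n/n}\to 2(1+\eta)\).

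Third, I would pass from expectation to high probability. Talagrand's concentration inequality for convex Lipschitz functions of bounded independent entries, applied to \(\sigma_1\) as a function of the entries, gives \(\sigma_1(R)\le\mathbb{E}\sigma_1(R)+t\) with probability at least \(1-e^{-ct^2/\sigma_*^2}\). Since \(\sigma_*\to0\), any fixed \(t\) works. Choosing \(\eta\) and \(t\) with \(2\eta+t<\epsilon\) gives \(\mathbb{P}(T_n<\epsilon)\to1\).

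\textbf{Main obstacle.} The delicate point is obtaining the sharp leading constant \(2\), rather than \(C\cdot 2\), for a non-symmetric, non-identically distributed matrix. I would handle it through the dilation
\[
\begin{pmatrix}0&R\\ R^\top&0\end{pmatrix},
\]
whose norm equals \(\sigma_1(R)\). Bandeira--van Handel applies to this symmetric matrix, but naively yields \(2\sigma\) with \(\sigma=\max(\text{row, column std})=1\), i.e.\ a bound of \(2\). So the dilation gives exactly the needed constant. I would double-check that the \(\sigma_*\sqrt{\log n}\) term is controlled. It is \(O(\sqrt{L\log n/n})\), which vanishes under Assumption \ref{assump:a3} even without the \(K_{\max}^2\) factor. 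I would also confirm that the zero diagonal only lowers the variance sums.
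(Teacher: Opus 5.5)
Your proposal is correct and follows essentially the same route as the paper: both apply the Bandeira--van Handel sharp bound to \(R\) with row/column variance parameter \(\tilde{\sigma}=1\) and entrywise bound \(\tilde{\sigma}_*=O(\sqrt{L/n})\), and use Assumption~\ref{assump:a3} (via \(L\log n/n\to 0\)) to make the \(\tilde{\sigma}_*\sqrt{\log n}\) term vanish. The differences are only in packaging: the paper invokes a rectangular tail-bound version (its Lemma~\ref{Rec312}) directly, whereas you obtain the non-symmetric case through the Hermitian dilation and split the argument into an expectation bound plus Talagrand concentration, which actually justifies the rectangular version the paper simply asserts; your sub-Gaussian/truncation detour is unnecessary, since the deterministic entry bound \(M_n\) already suffices.
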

Lemma \ref{idealTestStatistic} establishes the asymptotic spectral baseline for a correctly specified model. This result provides the essential theoretical anchor: it precisely quantifies the expected magnitude of the largest singular value in the ideal residual matrix under the null hypothesis, thereby forming the deterministic reference point required to later establish statistical power against underfitting.
\subsection{Practical test statistic and its theoretical guarantees}
In practice, model parameters are unknown. We approximate \(T_n\) by estimating the block probability matrices and community labels from the multi-layer adjacency matrices.

Let \(\mathcal{M}\) be a community detection algorithm for multi-layer directed networks. Apply \(\mathcal{M}\) to \(\{A^{(\ell)}\}_{\ell=1}^L\) to partition the \(n\) nodes into \(K_{s0}\) \textit{sender communities} and \(K_{r0}\) \textit{receiver communities}, yielding estimated labels \(\hat{g}^s\) for sender community and \(\hat{g}^r\) for receiver community. Then, for each layer \(\ell\), compute the plug-in estimator \(\hat{B}^{(\ell)} \in [0,1]^{K_{s0} \times K_{r0}}\) via  
\[  
\hat{B}^{(\ell)}(k,l) = \frac{\sum_{i:\hat{g}^s(i) = k} \sum_{j:\hat{g}^r(j) = l} A^{(\ell)}(i,j)}{|\{i: \hat{g}^s(i) = k\}| \cdot |\{j: \hat{g}^r(j) = l\}|}, \quad k=1,\dots,K_{s0}, \, l=1,\dots,K_{r0}.  
\]  

The estimated expected adjacency matrix \(\hat{\Omega}^{(\ell)}\) for layer \(\ell\) has entries \(\hat{\Omega}^{(\ell)}(i,j) = \hat{B}^{(\ell)}(\hat{g}^s(i), \hat{g}^r(j))\) for \(i \neq j\) and \(\hat{\Omega}^{(\ell)}(i,i) = 0\). 
Then we construct the normalized residual matrix \(\hat{R} \in \mathbb{R}^{n \times n}\) as 
    \begin{equation}\label{eq:residual_matrix}
    \hat{R}(i,j) = 
    \begin{cases} 
        \dfrac{\sum_{\ell=1}^L \left(A^{(\ell)}(i,j) - \hat{\Omega}^{(\ell)}(i,j)\right)}{\sqrt{(n-1) \sum_{\ell=1}^L \hat{\Omega}^{(\ell)}(i,j) (1 - \hat{\Omega}^{(\ell)}(i,j))}} & i \neq j, \\
        0 & i = j.
    \end{cases}
    \end{equation}

The practical test statistic is defined as
\begin{align}\label{eq:test_stat}  
\hat{T}_n = \sigma_1(\hat{R}) - 2.  
\end{align}   

The normalization in \(\hat{R}\) ensures \(\mathbb{E}[\hat{R}(i,j)] \approx 0\) and \(\operatorname{Var}[\hat{R}(i,j)] \approx (n-1)^{-1}\) under \(H_0\). The shift by 2  accounts for the asymptotic behavior of \(\sigma_1(\hat{R})\)'s upper bound under a correctly specified model, where \(\sigma_1(\hat{R})\)'s upper bound concentrates near 2 as shown by Theorem \ref{thm:null} given later. Under \(H_1\), \(\sigma_1(\hat{R})\) diverges due to unmodeled community structure guaranteed by Theorem \ref{thm:power} given later.  

While \(\mathcal{M}\) can be any method with consistent community recovery for multi-layer directed networks under \(H_0\), this paper employs the Debiased Sum of Gram matrices (DSoG) algorithm developed in \citep{su2024spectral}, where this algorithm extends the bias-adjusted spectral clustering idea firstly developed in \citep{lei2023bias} from multi-layer undirected networks to multi-layer directed networks.  Algorithm \ref{alg:spectral} details this procedure. 
\begin{algorithm}[H]
\caption{Debiased Sum of Gram matrices (DSoG)}
\label{alg:spectral}
\begin{algorithmic}[1]
\Require Multi-layer adjacency matrices \(\{A^{(\ell)}\}_{\ell=1}^L\), sender community number \(K_{s0}\), receiver community number \(K_{r0}\)
\Ensure Estimated labels \(\hat{g}^s, \hat{g}^r\)
\For{\(\ell = 1\) to \(L\)}
    \State Compute out-degree diagonal matrix \(D_{\ell}^{\text{out}}\) with \(D_{\ell}^{\text{out}}(i,i) = \sum_{j=1}^{n} A^{(\ell)}(i,j)\)
    \State Compute in-degree diagonal matrix \(D_{\ell}^{\text{in}}\) with \(D_{\ell}^{\text{in}}(i,i) = \sum_{j=1}^{n} A^{(\ell)}(j,i)\)
\EndFor
\For{side \( \in \{\)sender, receiver\(\}\)}
    \If{side is sender}
        \State \(S \gets \sum_{\ell=1}^{L} \left( A^{(\ell)} (A^{(\ell)})^{\top} - D_{\ell}^{\text{out}} \right)\)
        \State \(K \gets K_{s0}\)
    \Else
        \State \(S \gets \sum_{\ell=1}^{L} \left( (A^{(\ell)})^{\top} A^{(\ell)} - D_{\ell}^{\text{in}} \right)\)
        \State \(K \gets K_{r0}\)
    \EndIf
    \State Compute top \(\min(K_{s0}, K_{r0})\) eigenvectors of \(S\) as \(U \in \mathbb{R}^{n \times\min(K_{s0}, K_{r0})}\)
    \State Apply \(k\)-means clustering on rows of \(U\) with \(K\) clusters
    \State Assign resulting labels to \(\hat{g}^s\) (if sender) or \(\hat{g}^r\) (if receiver)
\EndFor
\State \Return \(\hat{g}^s, \hat{g}^r\)
\end{algorithmic}
\end{algorithm}

Given that a consistent estimator such as Algorithm \ref{alg:spectral} provides accurate community labels under \(H_0\), the plug-in residual matrix \(\hat{R}\) is asymptotically well-approximated by its oracle version \(R\). This approximation ensures that the spectral behavior of \(\hat{R}\) mirrors that of \(R\), whose largest singular value concentrates near 2 under the correct model. We therefore obtain the following fundamental convergence result for the goodness-of-fit statistic $\hat{T}_n$.
\begin{thm}[Asymptotic behavior of $\hat{T}_n$ under $H_0$]\label{thm:null}
Under \(H_0\) and Assumptions \ref{assump:a1}-\ref{assump:a4}, and let \(\hat{R}\) be obtained using consistent community estimators \(\hat{g}^{s}\) and \(\hat{g}^{r}\), then for any \(\epsilon>0\), we have
\[
\mathbb{P}(\hat{T}_n < \epsilon) \to 1 \quad \text{as} \quad n \to \infty.
\]
\end{thm}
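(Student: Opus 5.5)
We plan to reduce the statement to Lemma \ref{ideal0} through a perturbation argument. By Weyl's inequality for singular values, $\sigma_1(\hat{R}) \le \sigma_1(R) + \|\hat{R}-R\|$, so
\[
\hat{T}_n \le T_n + \|\hat{R}-R\|.
\]
It therefore suffices to show that $\|\hat{R}-R\| = o_P(1)$. Then for any $\epsilon>0$ we have $\mathbb{P}(\hat{T}_n<\epsilon)\ge \mathbb{P}(T_n<\epsilon/2)-\mathbb{P}(\|\hat{R}-R\|\ge \epsilon/2)\to 1$, using Lemma \ref{ideal0} for the first term.

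The first step is to work on the event $E_n=\{\hat{g}^s=g^s,\ \hat{g}^r=g^r\}$, up to label permutation. By Assumption \ref{assump:a4}, $\mathbb{P}(E_n)\to1$. On $E_n$, $\hat{B}^{(\ell)}(k,l)$ is the empirical mean of $n_k^s n_l^r$ independent Bernoulli variables with mean $B^{(\ell)}(k,l)$. The diagonal terms contribute at most $O(1/n)$ to this mean, which is negligible. By Assumption \ref{assump:a2}, each block contains at least $c_0^2 n^2/K_{\max}^2$ entries. Hoeffding's inequality with a union bound over the $K_sK_rL$ triples $(k,l,\ell)$ then gives
\[
\max_{k,l,\ell}|\hat{B}^{(\ell)}(k,l)-B^{(\ell)}(k,l)| = O_P\Bigl(K_{\max}\sqrt{\log(nL)}/n\Bigr).
\]
By Assumption \ref{assump:a3} this bound is $o(\delta)$. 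Consequently, Assumption \ref{assump:a1} implies that $\hat{\Omega}^{(\ell)}(i,j)\in[\delta/2,1-\delta/2]$ with high probability, so the estimated variances $\hat{v}_{ij}=\sum_\ell \hat{\Omega}^{(\ell)}(i,j)(1-\hat{\Omega}^{(\ell)}(i,j))$ are bounded below by $cL$.

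Next, for $i\neq j$, write $X_{ij}=\sum_\ell(A^{(\ell)}(i,j)-\Omega^{(\ell)}(i,j))$ and $\Delta_{ij}=\sum_\ell(\Omega^{(\ell)}(i,j)-\hat{\Omega}^{(\ell)}(i,j))$, with $v_{ij}$ the true variance. We decompose
\[
\hat{R}-R = \underbrace{\Bigl[\tfrac{X_{ij}}{\sqrt{n-1}}\Bigl(\tfrac{1}{\sqrt{\hat v_{ij}}}-\tfrac{1}{\sqrt{v_{ij}}}\Bigr)\Bigr]}_{D_1}+\underbrace{\Bigl[\tfrac{\Delta_{ij}}{\sqrt{(n-1)\hat v_{ij}}}\Bigr]}_{D_2}.
\]
On $E_n$, both $\hat v_{ij}$ and $\Delta_{ij}$ depend on $(i,j)$ only through the block pair $(g^s(i),g^r(j))$. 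Hence $D_2$ is constant on blocks and, apart from its zero diagonal, has rank at most $K_{\max}$. Its Frobenius norm is at most
\[
n\cdot \frac{L\cdot O_P\bigl(K_{\max}\sqrt{\log(nL)}/n\bigr)}{\sqrt{nL}} = O_P\bigl(K_{\max}\sqrt{L\log(nL)/n}\bigr)=o_P(1)
\]
by Assumption \ref{assump:a3}. Removing the diagonal costs at most the maximum entry, which is $o(1)$.

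For $D_1$, the factor $(\hat v_{ij}^{-1/2}-v_{ij}^{-1/2})/v_{ij}^{-1/2}$ is a block-constant multiplier $c_{kl}$ of size $O_P(K_{\max}\sqrt{\log(nL)}/n)$. Thus $D_1$ is a Hadamard product of $R$ with a block-constant matrix. It can be written as $\sum_{k,l} P_k R P'_l\,c_{kl}$, where $P_k$ and $P'_l$ are diagonal indicators of the blocks, so
\[
\|D_1\|\le \sum_{k,l}|c_{kl}|\,\|R\| \le K_{\max}^2\max|c_{kl}|\cdot O_P(1).
\]
By Lemma \ref{ideal0}, $\|R\|\le 3$ with high probability, and the resulting bound is $O_P(K_{\max}^3\sqrt{\log(nL)}/n)=o_P(1)$ by Assumption \ref{assump:a3}. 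A cruder Frobenius bound also suffices here, since $\|R\|_F=O_P(\sqrt n)$ and $\max|c_{kl}|\sqrt n\to0$.

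The step I expect to be the main obstacle is the first: getting uniform control of $\hat B^{(\ell)}$ over all blocks and layers while keeping $\hat v_{ij}$ bounded away from zero. Two details need care there. One is the handling of label permutations and the zero diagonal in the block means. The other is matching the log factor, $\log(nL)$ versus $\log n$. I would first absorb this using $L\le n$, which is implied by Assumption \ref{assump:a3}. Once these are in place, the bounds on $D_1$ and $D_2$ and the Weyl step are routine.
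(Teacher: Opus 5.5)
Your argument is essentially the paper's. Theorem \ref{thm:null} is proved there by Weyl's inequality plus Lemma \ref{ideal0}, and $\|\hat R-R\|=o_P(1)$ is established in Lemma \ref{lem:norm-error}. That lemma uses the same split into a variance-mismatch term (your $D_1$) and a mean-mismatch term (your $D_2$), works on the correct-label event with $\hat\Omega^{(\ell)}\in[\delta/2,1-\delta/2]$, and closes with a Frobenius bound of order $K_{\max}\sqrt{L\log n/n}$.

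\textbf{Correction to the $D_1$ step.} Your spectral bound $\|D_1\|\le K_sK_r\max|c_{kl}|\,\|R\|=O_P(K_{\max}^3\sqrt{\log(nL)}/n)$ is \emph{not} $o_P(1)$ under Assumption \ref{assump:a3} alone. For example, with $L=1$ and $K_{\max}=n^{2/5}$ the assumption holds, yet this bound grows like $n^{1/5}\sqrt{\log n}$.

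Your fallback does close the argument, so it should be the main step: $\|D_1\|_F\le\max|c_{kl}|\,\|R\|_F=O_P(K_{\max}\sqrt{\log(nL)/n})$, using $\|R\|_F=O_P(\sqrt n)$. This is in fact slightly sharper than the paper's entrywise bound via $|S_{ij}|\le L$.

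Alternatively, you can keep a spectral argument with the block-matrix inequality $\|D_1\|^2\le\sum_{k,l}c_{kl}^2\|P_kRP'_l\|^2$. It gives a factor $\sqrt{K_sK_r}$ in place of $K_sK_r$, hence $O_P(K_{\max}^2\sqrt{\log(nL)}/n)=o_P(1)$.

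\textbf{Minor points.} The diagonal bias in the block means is $O(K_{\max}/n)$ rather than $O(1/n)$, but it is still negligible next to $K_{\max}\sqrt{\log n}/n$. Your device of comparing $\hat B$ with the oracle block averages on $E_n$ is cleaner than conditioning on $(\hat g^s,\hat g^r)$ as the paper's Lemma \ref{lem:concentration} does, because $\hat g$ depends on $A$.
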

Theorem \ref{thm:null} guarantees that the upper bound of \(\hat{T}_{n}\) converges to 0 in probability under \(H_0\). This provides the theoretical basis for decision rules: small values of \(\hat{T}_n\) support \(H_0\), while large values signal model inadequacy. The following theorem guarantees that the test statistic diverges for underfitted models where \(K_s > K_{s0}\) or \(K_r > K_{r0}\). 

\begin{thm}[Asymptotic behavior of $\hat{T}_n$ under $H_1$]\label{thm:power}
Under Assumptions \ref{assump:a1}--\ref{assump:a3}, and with the following additional conditions:
\begin{enumerate}
    \item[(A1)] There exists a constant \(\eta > 0\) such that for any two distinct true sender communities \(k \neq k'\), there exists a receiver community \(l\) satisfying
    \[
    \left| \frac{1}{L} \sum_{\ell=1}^L \left( B^{(\ell)}(k, l) - B^{(\ell)}(k', l) \right) \right| \ge \eta.
    \]
     The symmetric version should also hold.
    \item[(A2)] The network size, number of layers, and maximum community number satisfy
    \[
    \frac{nL}{K_{\max}^3} \to \infty \quad \text{as } n \to \infty.
    \]
\end{enumerate}

If either \(K_s > K_{s0}\) or \(K_r > K_{r0}\) (or both), then we have
\[
\hat{T}_n \xrightarrow{P} \infty.
\]
\end{thm}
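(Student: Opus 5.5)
The plan is to bound $\sigma_1(\hat R)$ from below by a deterministic signal term minus a stochastic noise term. The signal should be of order $\sqrt{nL}/K_{\max}^{3/2}$, and (A2) then forces it to infinity. We work on the event that each $\hat B^{(\ell)}$ is computed on some fixed (possibly wrong) partition $(\hat g^s,\hat g^r)$ with $K_{s0}\times K_{r0}$ blocks.

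First, write $A^{(\ell)}-\hat\Omega^{(\ell)} = (A^{(\ell)}-\Omega^{(\ell)}) + (\Omega^{(\ell)}-\bar\Omega^{(\ell)}) + (\bar\Omega^{(\ell)}-\hat\Omega^{(\ell)})$. Here $\bar\Omega^{(\ell)}$ is the blockwise average of $\Omega^{(\ell)}$ over the estimated blocks, i.e.\ the population analogue of $\hat B^{(\ell)}$. By Assumption \ref{assump:a1}, every entry of $\hat\Omega^{(\ell)}$ lies in $[\delta/2,1-\delta/2]$ with high probability. Hence the denominators of $\hat R$ are of order $\sqrt{nL}$ uniformly, bounded above and below by constants depending on $\delta$. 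Thus $\hat R = D\circ(E+\Delta+F)/\sqrt{n-1}$, where:
\begin{itemize}
\item $D$ is a weight matrix with entries in $[c_1,c_2]/\sqrt{L}$;
\item $E=\sum_\ell (A^{(\ell)}-\Omega^{(\ell)})$ is the noise term;
\item $\Delta=\sum_\ell(\Omega^{(\ell)}-\bar\Omega^{(\ell)})$ is the deterministic misfit;
\item $F=\sum_\ell(\bar\Omega^{(\ell)}-\hat\Omega^{(\ell)})$ is the estimation error.
\end{itemize}
A uniform Bernstein bound over the at most $K_{s0}K_{r0}$ blocks, each of size $\gtrsim n^2/K_{\max}^2$, gives $\max|F_{ij}| = O_P(K_{\max}\sqrt{L\log n}/n)$. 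Its contribution to $\sigma_1(\hat R)$ is then at most $O_P(K_{\max}\sqrt{\log n/n})\cdot\sqrt n\cdot$const, which is $o_P(\sqrt{nL}/K_{\max}^{3/2})$ by Assumption \ref{assump:a3}. By Lemma \ref{ideal0} (or a standard bound on $\|E\|$ for independent bounded entries), the noise part has spectral norm $O_P(1)$ after normalization.

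The core step is the signal term. Suppose $K_s>K_{s0}$; the receiver case follows symmetrically using the symmetric version of (A1). By pigeonhole, some estimated sender cluster $\hat k$ contains two true sender communities $k\neq k'$. The candidate labels range over all partitions, so we cannot use Assumption \ref{assump:a4}. Instead we argue that some estimated cluster must contain at least $c\,n/K_{\max}^2$ nodes from each of two distinct true communities. This holds since each true community has size $\ge c_0 n/K_s$ and there are only $K_{s0}<K_s$ estimated clusters. Take $l$ from (A1), and a receiver set $J$ of size $\gtrsim n/K_{\max}^2$ lying within true community $l$ and within one estimated receiver cluster. On these rows, $\frac1L\Delta_{ij}$ equals $\frac1L\sum_\ell B^{(\ell)}(k,l)$ minus a common average for $i$ of type $k$, and $\frac1L\sum_\ell B^{(\ell)}(k',l)$ minus the same average for $i$ of type $k'$. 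Because these two population averages differ by at least $\eta$, at least one of the two row sets $I$ has $|\Delta_{ij}|\ge \eta L/2$ with a constant sign on $I\times J$.

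Now test $\hat R$ against the unit vectors $u=\mathbf 1_I/\sqrt{|I|}$ and $v=\mathbf 1_J/\sqrt{|J|}$. The weights $D$ are bounded below and the signs are constant, so $u^\top \hat R_\Delta v \gtrsim \frac{\eta L}{\sqrt{nL}}\sqrt{|I||J|}\gtrsim \eta\sqrt{nL}/K_{\max}^{2}$. To reach the sharper $K_{\max}^{3/2}$ rate needed to match (A2), we choose $I$ of size $\gtrsim n/K_{\max}$ (an entire true community intersected with a dominant estimated cluster) and $J$ of size $\gtrsim n/K_{\max}^2$. The noise and $F$ terms contribute at most $\|\hat R_E\|+\|\hat R_F\|=O_P(1)+o_P(\sqrt{nL}/K_{\max}^{3/2})$. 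Hence $\sigma_1(\hat R)\ge u^\top\hat R v\to\infty$ in probability, and so does $\hat T_n$.

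The main obstacle is the combinatorial step of extracting large, sign-coherent blocks $I\times J$ that hold uniformly over all possible estimated partitions. The labels are data-dependent, so the bound on $F$ and the noise must be uniform. For the noise, I would bypass this issue by bounding $\|D\circ E\|$ in spectral norm, which needs no knowledge of the labels. For $F$, I would take a union bound over the at most $K_{\max}^{n}$ partitions, or else bound $\hat B^{(\ell)}-\bar B^{(\ell)}$ through $|\mathbf 1_S^\top E^{(\ell)}\mathbf 1_T|\le\|E^{(\ell)}\|\sqrt{|S||T|}$, which is label-free and gives $|F_{ij}|\lesssim K_{\max}\sqrt{L}/\sqrt n$. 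Its spectral contribution is then $O(K_{\max})$, which is again dominated by the signal under (A2).
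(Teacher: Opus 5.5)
Your architecture matches the paper's: pigeonhole to find a merged sender cluster, (A1) to pick $l$, a receiver-side pigeonhole set $J$, and signal versus noise on a single estimated block. The gap is the step where you upgrade your bound $u^\top\hat R_\Delta v\gtrsim\eta\sqrt{nL}/K_{\max}^{2}$ to the rate $\sqrt{nL}/K_{\max}^{3/2}$ that (A2) requires. You take $I$ of size $\gtrsim n/K_{\max}$ as ``an entire true community intersected with a dominant estimated cluster''. But the dominant-cluster pigeonhole you correctly used only gives $|C_k\cap\hat C^s_{s_0}|\ge |C_k|/K_{s0}\ge c_0n/(K_sK_{s0})$, with $C_k=\{i:g^s(i)=k\}$, which is of order $n/K_{\max}^2$. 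With $K_{s0}=K_s-1$, a partition that spreads every true community evenly over the estimated clusters has no intersection larger than that. Moreover, you cannot choose which of the two pieces carries $|\Delta_{ij}|\ge\eta L/2$. If the block $\hat C^s_{s_0}\times\hat C^r_{r_0}$ consists mostly of $k_1$-senders and $l$-receivers, the centering constant is close to $\sum_\ell B^{(\ell)}(k_1,l)$, and the large deviation sits on the small $k_2$-piece. So the intersection argument yields divergence only when $nL/K_{\max}^4\to\infty$ (for instance, fixed $K_{\max}$), not under (A2). The paper reaches $K_{\max}^{3/2}$ with the same receiver-side size as yours, $|T'|\ge c_0n/K_{\max}^2$. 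It gets $\min(s_1,s_2)\ge c_0n/K_{\max}$ by taking $S_1,S_2$ to be \emph{entire} true sender communities contained in the merged cluster $\hat C^s_{s_0}$. That containment holds when the underfitted partition merges whole true communities, but it does not follow from pigeonhole. To match (A2), you must either adopt that structure explicitly or find another source of the missing factor $\sqrt{K_{\max}}$.

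Your treatment of $F=\sum_\ell(\bar\Omega^{(\ell)}-\hat\Omega^{(\ell)})$ also falls short of (A2). Both routes you sketch presume every estimated block has $\gtrsim n^2/K_{\max}^2$ entries. Nothing guarantees this for an arbitrary underfitted partition: Assumption \ref{assump:a2} concerns true communities, and Assumption \ref{assump:a4} is neither assumed here nor meaningful under $H_1$. Even granting it, the label-free version leaves a normalized contribution of order $K_{\max}$, which is $o(\sqrt{nL}/K_{\max}^{3/2})$ only if $nL/K_{\max}^5\to\infty$. The paper sidesteps this term entirely by centering at the realized $\hat\Omega^{(\ell)}$, which equals the single constant $\hat B^{(\ell)}(s_0,r_0)$ on $S\times T'$. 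Hence $d_1-d_2=\sum_\ell\bigl(B^{(\ell)}(k_1,l^*)-B^{(\ell)}(k_2,l^*)\bigr)$ does not involve $\hat B$, and $\max(|d_1|,|d_2|)\ge\eta L/2$ holds whatever $\hat B$ is. For the same reason you do not need $\hat\Omega^{(\ell)}\in[\delta/2,1-\delta/2]$, which Lemma \ref{lem:bound-Omega2} gives only under $H_0$. On a single estimated block the normalization is one scalar $\hat d^{-1}\ge 2/\sqrt{(n-1)L}$ multiplying signal minus noise, and this trivial lower bound suffices once the signal dominates. On the other hand, controlling the noise through the label-free $\|W\|$ (via $\|W_{I,J}\|\le\|W\|$) is sound. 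It is an improvement over the paper, which applies Lemma \ref{Rec312} directly to $W_{S,T'}$ even though $S,T'$ are data-dependent.
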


This divergence property ensures that whenever the hypothesized model lacks sufficient sender or receiver communities, \(\hat{T}_n\) will exceed any fixed threshold with probability approaching 1. Combined with Theorem \ref{thm:null}, this guarantees asymptotically separation between correctly specified and underfitted models.

\begin{rem}
(Proof intuition for Theorem \ref{thm:power}). The divergence of $\hat{T}_n$ under underfitting is driven by a \textit{structural bias} that persists no matter how the parameters are estimated within the underspecified model. This bias arises when distinct true communities---say, in sender roles---are forced into a single estimated block. Condition (A1) guarantees the existence of a receiver community where the average connectivity of these merged groups differs by a fixed amount $\eta > 0$. This systematic gap creates a deterministic, low-rank signal in the residual matrix. The proof hinges on comparing the spectral norms of this signal and the random noise: the signal grows as $O_P(L n / K_{\max}^{3/2})$, while the noise is only $O_P(\sqrt{n L})$. After the variance normalization in $\hat{R}$, the signal term remains of order $\sqrt{n L}/K_{\max}^{3/2}$, whereas the noise is $O_P(1)$. Condition (A2) ($nL/K_{\max}^3 \to \infty$) ensures the signal dominates asymptotically, forcing $\sigma_1(\hat{R})$---and hence $\hat{T}_n$---to diverge. This sharp contrast between bounded fluctuations under a correct specification (Theorem \ref{thm:null}) and divergent growth under underfitting provides the rigorous foundation for the model-selection procedures developed in Section \ref{sec:estimation}.
\end{rem}
\section{Sequential testing algorithms for asymmetric community numbers selection}\label{sec:estimation}
Building on the goodness-of-fit test developed in Section \ref{sec:test}, we now address the core problem of joint community numbers estimation. The sequential testing framework leverages the asymptotic behavior of \(\hat{T}_n\) established in Theorems \ref{thm:null} and \ref{thm:power} to systematically identify the true \((K_s, K_r)\) while maintaining computational efficiency. This approach transforms model selection into an ordered exploration of candidate pairs, where the test statistic's dichotomous behavior—convergence to zero under correct specification versus divergence under underfitting—provides reliable stopping criteria. We further propose a ratio-based variant of this approach to enhance robustness in practical settings.
\subsection{The MLDiGoF algorithm and its estimation consistency}\label{sec3point1}
The estimation procedure evaluates candidate pairs \((k_s, k_r)\) in a lexicographical order that first compares the total number of communities \(k_s + k_r\), with smaller totals being prioritized. For pairs with an equal total, the pair with the smaller sender community number \(k_s\) is examined first. Formally, we define this search order by the following order:
\[
(k_s, k_r) < (k'_s, k'_r) \quad \text{if and only if} \quad 
\begin{cases}
k_s + k_r < k'_s + k'_r, \text{ or} \\
k_s + k_r = k'_s + k'_r \ \text{and} \ k_s < k'_s.
\end{cases}
\]

This strategy guides the search from simpler to more complex models. Let \(\mathcal{P} = \{(k_s^{(m)}, k_r^{(m)})\}_{m=1}^M\) be the complete sequence of candidate pairs from (1,1) to \((K_{\text{cand}}, K_{\text{cand}})\), listed in the order defined above, where \(K_{\text{cand}}\) is the maximum candidate number of communities, \(M = K_{\text{cand}}^2\), and the index \(m\) denotes the position of the candidate pair \((k_s, k_r)\) in the ordered sequence \(\mathcal{P}\). The detailed mapping between the index $m$ and the candidate pair $(k_s,k_r)$ for $K_{\text{cand}}=10$ can be found in the following example. 
\begin{Ex}\label{example:K10}
Consider searching over candidate sender community numbers \(k_s\) from 1 to 10 and receiver community numbers \(k_r\) from 1 to 10. Table \ref{tab:lex_order_K10} lists the candidate pairs \(\mathcal{P}\) in the order defined in this paper (first by increasing \(k_s+k_r\), then by increasing \(k_s\)), with the index \(m\) running from 1 to 100. 
\begin{table}[htbp]
\centering
\caption{Search order of candidate pairs $\mathcal{P}$ for $K_{\text{cand}}=10$}
\scriptsize
\setlength{\tabcolsep}{2pt}
\label{tab:lex_order_K10}
\begin{tabular}{|cc|cc|cc|cc|cc|cc|cc|cc|cc|cc|}
\toprule
$m$ & $(k_s, k_r)$ & $m$ & $(k_s, k_r)$ & $m$ & $(k_s, k_r)$ & $m$ & $(k_s, k_r)$ & $m$ & $(k_s, k_r)$ & $m$ & $(k_s, k_r)$ & $m$ & $(k_s, k_r)$ & $m$ & $(k_s, k_r)$ & $m$ & $(k_s, k_r)$ & $m$ & $(k_s, k_r)$ \\
\midrule
1 & (1,1) & 11 & (1,5) & 21 & (6,1) & 31 & (3,6) & 41 & (5,5) & 51 & (6,5) & 61 & (7,5) & 71 & (9,4) & 81 & (6,9) & 91 & (7,10) \\
2 & (1,2) & 12 & (2,4) & 22 & (1,7) & 32 & (4,5) & 42 & (6,4) & 52 & (7,4) & 62 & (8,4) & 72 & (10,3) & 82 & (7,8) & 92 & (8,9) \\
3 & (2,1) & 13 & (3,3) & 23 & (2,6) & 33 & (5,4) & 43 & (7,3) & 53 & (8,3) & 63 & (9,3) & 73 & (4,10) & 83 & (8,7) & 93 & (9,8) \\
4 & (1,3) & 14 & (4,2) & 24 & (3,5) & 34 & (6,3) & 44 & (8,2) & 54 & (9,2) & 64 & (10,2) & 74 & (5,9) & 84 & (9,6) & 94 & (10,7) \\
5 & (2,2) & 15 & (5,1) & 25 & (4,4) & 35 & (7,2) & 45 & (9,1) & 55 & (10,1) & 65 & (3,10) & 75 & (6,8) & 85 & (10,5) & 95 & (8,10) \\
6 & (3,1) & 16 & (1,6) & 26 & (5,3) & 36 & (8,1) & 46 & (1,10) & 56 & (2,10) & 66 & (4,9) & 76 & (7,7) & 86 & (6,10) & 96 & (9,9) \\
7 & (1,4) & 17 & (2,5) & 27 & (6,2) & 37 & (1,9) & 47 & (2,9) & 57 & (3,9) & 67 & (5,8) & 77 & (8,6) & 87 & (7,9) & 97 & (10,8) \\
8 & (2,3) & 18 & (3,4) & 28 & (7,1) & 38 & (2,8) & 48 & (3,8) & 58 & (4,8) & 68 & (6,7) & 78 & (9,5) & 88 & (8,8) & 98 & (9,10) \\
9 & (3,2) & 19 & (4,3) & 29 & (1,8) & 39 & (3,7) & 49 & (4,7) & 59 & (5,7) & 69 & (7,6) & 79 & (10,4) & 89 & (9,7) & 99 & (10,9) \\
10 & (4,1) & 20 & (5,2) & 30 & (2,7) & 40 & (4,6) & 50 & (5,6) & 60 & (6,6) & 70 & (8,5) & 80 & (5,10) & 90 & (10,6) & 100 & (10,10) \\
\bottomrule
\end{tabular}
\end{table}
\end{Ex}

The estimator \((\hat{K}_s, \hat{K}_r)\) is then the first pair in the search sequence for which the goodness-of-fit test does not reject:
\[
(\hat{K}_s, \hat{K}_r) = (k_s^{(\hat{m})}, k_r^{(\hat{m})}), \quad \text{where} \quad \hat{m} = \min\{ m : \hat{T}_n(k_s^{(m)}, k_r^{(m)}) < t_n \}.
\]

Here, \(t_n = n^{-\varepsilon}\) for some \(\varepsilon \in (0, 0.5)\) is a threshold that decays to zero with the network size \(n\). Algorithm \ref{alg:DiGoF} below summarizes the details of this sequential testing procedure.
\begin{algorithm}[H]
\caption{MLDiGoF}\label{alg:DiGoF}
\begin{algorithmic}[1]
\Require Multi-layer adjacency matrices \(\{A^{(\ell)}\}_{\ell=1}^L\), significance threshold \(t_n\) (default: \(n^{-1/5}\)), maximum candidate number \(K_{\mathrm{cand}}\) (default: \(\lfloor \sqrt{n / \log n} \rfloor\)), where \(n\) is the number of nodes
\Ensure Estimated community numbers \((\hat{K}_s, \hat{K}_r)\)
\State Generate candidate sequence \(\mathcal{P} = \{(k_s, k_r)\}_{m=1}^M\) with \(M = K_{\mathrm{cand}}^2\)
\For{\(m = 1\) to \(M\)}
    \State Let \((k_s, k_r) \gets \mathcal{P}(m)\)
    \State Compute \(\hat{T}_n(k_s, k_r)\) via Equation (\ref{eq:test_stat}) using Algorithm \ref{alg:spectral} for community estimation
    \If{\(\hat{T}_n(k_s, k_r) < t_n\)}
        \State \Return \((\hat{K}_s, \hat{K}_r) = (k_s, k_r)\)
    \EndIf
\EndFor
\State \Return \((\hat{K}_s, \hat{K}_r) = \mathcal{P}(M)\) \Comment{If no candidate satisfies \(\hat{T}_n < t_n\), return the largest candidate}
\end{algorithmic}
\end{algorithm}

\begin{rem}
The specification of \(K_{\text{cand}}\), the maximum candidate number of communities, follows naturally from the asymptotic regime imposed by Assumption \ref{assump:a3}. This assumption, requiring \(K_{\text{max}}^2 L \log n / n \to 0\), provides an implicit upper bound for the true community numbers. A theoretically coherent and computationally feasible choice is therefore \(K_{\text{cand}} = \lfloor \sqrt{n / \log n} \rfloor\). This selection guarantees that, with probability tending to one, the true pair \((K_s, K_r)\) resides within the candidate set \(\{(k_s, k_r): 1 \le k_s, k_r \le K_{\text{cand}}\}\) for all sufficiently large \(n\). Consequently, the sequential testing procedure remains consistent. Furthermore, it bounds the total search space by \(O(n / \log n)\) candidate pairs, ensuring the algorithm's practicality without compromising its theoretical foundations.
\end{rem}

The following theorem guarantees that the sequential procedure achieves joint consistency in recovering the true community numbers under ML-ScBM, provided the threshold sequence \(t_n\) satisfies certain conditions that balance the convergence rates under the null and the divergence rates under underfitting.

\begin{thm}[Consistency of the MLDiGoF algorithm]\label{thm:consistency}
Let the multi-layer directed network be generated from ML-ScBM with true parameters $(K_s, K_r, g^s, g^r, \{B^{(\ell)}\}_{\ell=1}^L)$. 
Assume:
\begin{enumerate}
    \item Assumptions \ref{assump:a1}-\ref{assump:a4} hold.
    \item Conditions (A1) and (A2) of Theorem \ref{thm:power} hold.
\end{enumerate}

Define \(\alpha_n \coloneqq \sqrt{\frac{K_{\max}^2 L \log n}{n}}\) and $\beta_n\coloneqq\frac{\sqrt{nL}}{K^{3/2}_{\max}}$. Let $\{t_n\}_{n \ge 1}$ be a sequence of positive thresholds satisfying:
\begin{enumerate}
    \item[(C1)] $\alpha_n = o(t_n)$ as $n \to \infty$.
    \item[(C2)] $t_n=o(\beta_n)$ as $n \to \infty$.
\end{enumerate}

Then the output $(\hat{K}_s, \hat{K}_r)$ of Algorithm \ref{alg:DiGoF} satisfies
\[
\lim_{n \to \infty} \mathbb{P}\bigl( (\hat{K}_s, \hat{K}_r) = (K_s, K_r) \bigr) = 1.
\]
\end{thm}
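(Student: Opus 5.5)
The event $\{(\hat{K}_s,\hat{K}_r)=(K_s,K_r)\}$ is the intersection of two events. Let $m^\ast$ be the index of $(K_s,K_r)$ in $\mathcal{P}$. By Assumption \ref{assump:a3}, $K_{\max}\le K_{\mathrm{cand}}$ for large $n$, so $m^\ast\le M$ is well defined. The first event is $E_1=\{\hat{T}_n(K_s,K_r)<t_n\}$: the procedure does not skip the truth. The second event is $E_2=\{\hat{T}_n(k_s^{(m)},k_r^{(m)})\ge t_n \text{ for all } m<m^\ast\}$: the procedure does not stop early. If $E_1\cap E_2$ holds, Algorithm \ref{alg:DiGoF} returns exactly $(K_s,K_r)$, so it suffices to show $\mathbb{P}(E_1)\to1$ and $\mathbb{P}(E_2^c)\to0$.

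\textbf{The event $E_1$.} Theorem \ref{thm:null} only gives $\hat{T}_n<\epsilon$ for fixed $\epsilon$, which is too weak because $t_n\to0$. I would sharpen it using the decomposition behind that theorem. On the event $\{\hat g^s=g^s,\hat g^r=g^r\}$, which has probability tending to one by Assumption \ref{assump:a4}, we have $\sigma_1(\hat R)\le \sigma_1(R)+\|\hat R-R\|$. Lemma \ref{lem:norm-error} bounds the second term by $O_P(\alpha_n)$. For the first term I would invoke the non-asymptotic form of Lemma \ref{ideal0}, giving $\sigma_1(R)\le 2+O_P(\alpha_n)$. Concretely, this form comes from a Bandeira--van Handel type bound $\mathbb{E}\|R\|\le \sigma+\sigma_\ast+C\sigma_\ast\sqrt{\log n}$ with $\sigma\approx 2$ up to $O(n^{-1/2})$ corrections, $\sigma_\ast=O(\sqrt{L/n})$ (using Assumption \ref{assump:a1}), together with Talagrand concentration; both corrections are $o(\alpha_n)$ or $O(\alpha_n)$. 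Hence $\hat T_n(K_s,K_r)=O_P(\alpha_n)$ from above, and (C1) yields $\mathbb{P}(\hat T_n(K_s,K_r)\ge t_n)\to0$. I expect this step to be the main obstacle. One must check that the rate hidden in Theorem \ref{thm:null} is genuinely $O_P(\alpha_n)$ rather than merely $o_P(1)$, including the $2+o(1)$ term for $\sigma_1(R)$ itself. If the sharp $2$ is only available as $2+o(1)$ without a rate, I would restrict the argument to the rate stated in Lemma \ref{lem:norm-error} and note that (C1) is the assumption calibrated precisely to this rate.

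\textbf{The event $E_2$.} Every pair $(k_s,k_r)$ with $m<m^\ast$ has $k_s+k_r\le K_s+K_r$ and is not equal to $(K_s,K_r)$. Therefore $k_s<K_s$ or $k_r<K_r$, so each such pair is underfitted in the sense of Theorem \ref{thm:power}. Pointwise divergence is not enough, because the number of such pairs, up to $m^\ast\le (K_s+K_r)^2$, may grow. I would therefore use the quantitative content of the proof of Theorem \ref{thm:power}: uniformly over all underfitted $(k_s,k_r)$ and all label assignments, $\sigma_1(\hat R)\ge c\beta_n-\|\text{noise}\|$. Here the signal bound is deterministic given (A1) and the pigeonhole merging of two true communities. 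The noise bound should be controlled by a single high-probability event $\|\sum_\ell (A^{(\ell)}-\Omega^{(\ell)})\|\le C\sqrt{nL}$ that does not depend on the candidate pair, with the normalization kept bounded via Assumption \ref{assump:a1} and the fact that $\hat\Omega\in[0,1]$ being away from $0,1$ w.h.p. Since this event involves no union over candidates, we get, on one event of probability tending to one, $\min_{m<m^\ast}\hat T_n\ge c\beta_n-C-2$. By (A2), $\beta_n\to\infty$, and by (C2), $t_n=o(\beta_n)$, so the minimum exceeds $t_n$ for large $n$.

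Finally, combining the two parts gives $\mathbb{P}(E_1\cap E_2)\ge 1-\mathbb{P}(E_1^c)-\mathbb{P}(E_2^c)\to1$, which proves the theorem.
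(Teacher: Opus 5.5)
Your proposal is correct relative to the ingredients the paper itself uses. For $E_1$ it is the paper's Part~1. Weyl gives $\hat T_n(K_s,K_r)\le T_n+\|\hat R-R\|$, and the proof of Lemma~\ref{lem:norm-error} gives $O_P(\alpha_n)$ for the second term. The ``non-asymptotic form of Lemma~\ref{ideal0}'' you want is exactly \eqref{eqp1}. The paper evaluates it at $x=t_n/3$ and shows it vanishes because $t_n^2 n/(L\log n)\to\infty$, which follows from (C1).

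For $E_2$ you take a genuinely different route. The paper works candidate by candidate. It applies \eqref{eqp13} to $W_{S,T'}$ to get $\mathbb{P}(\hat T_n(k_s,k_r)<t_n)\le n\exp(-c\,n^2/K_{\max}^3)=o(K_{\max}^{-2})$, then union-bounds over the $O(K_{\max}^2)$ candidates preceding the truth. You instead use $\|W_{S,T'}\|\le\|W\|$, so a single event $\{\|W\|\le C\sqrt{nL}\}$ controls the noise for every candidate at once. That event follows from Lemma~\ref{Rec312} applied to the full $W$, exactly as in Step~6 of the proof of Theorem~\ref{thm:power}. Meanwhile the signal is deterministic given the labels, because $d_1-d_2$ does not involve $\hat B^{(\ell)}$. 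This removes the union bound and the need to count candidates. More substantively, it is immune to the fact that $T'$ is determined by estimated labels computed from the same adjacency matrices. Applying the fixed-index bound \eqref{eqp13} to such a data-dependent submatrix, as the paper does, is not directly justified. What the paper's route buys in exchange is an explicit exponential tail for each candidate.

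A few caveats.

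\begin{itemize}
\item Your fallback for $E_1$ would not work. If $\sigma_1(R)$ were only known to be $2+o_P(1)$, no rate on $\|\hat R-R\|$ could force $\hat T_n<t_n\to0$. A quantitative oracle tail at a vanishing level is indispensable.
\item The bound $\mathbb{E}\|R\|\le 2+C\sigma_*\sqrt{\log n}$ you quote is stronger than what Bandeira--van Handel state. Their bounds carry a factor $1+\epsilon$ with constants that degenerate as $\epsilon\to0$. The paper obtains the same strength by taking the constant in Lemma~\ref{Rec312} independent of $\eta$, so relative to the paper nothing is lost.
\item In $E_2$ the normalization needs only the deterministic bound $\hat D_{ij}\le\frac12\sqrt{(n-1)L}$ from Step~8. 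You do not need $\hat\Omega$ to avoid $0$ and $1$, which Lemma~\ref{lem:bound-Omega2} gives only under $H_0$.
\item Your uniformity ``over all label assignments'' rests on Step~1 of the proof of Theorem~\ref{thm:power}, which assumes whole true communities lie inside one estimated community. For arbitrary labels the pigeonhole only guarantees overlaps of size at least $c_0 n/K_{\max}^2$. This is harmless for bounded $K_{\max}$ but changes the power of $K_{\max}$ in $\beta_n$. It is a weakness you inherit from the paper rather than introduce.
\end{itemize}
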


\begin{rem}[Interpretation of conditions]\label{rem:conditions}
Conditions (C1) and (C2) are necessary for MLDiGoF's estimation consistency.
\begin{itemize}
  \item Condition (C1) requires that the threshold \(t_n\) decays slower than the estimation error rate \(\alpha_n = \sqrt{K_{\max}^2 L \log n / n}\). This ensures that the difference between the practical test statistic \(\hat{T}_n\) and the oracle statistic \(T_n\) (which is of order \(O_P(\alpha_n)\)) is asymptotically negligible compared to \(t_n\) under the true model. 
  \item Condition (C2) requires that \(t_n\) grows slower than the divergence rate \(\beta_n = \sqrt{nL}/K_{\max}^{3/2}\) under underfitting. This guarantees that \(\hat{T}_n\) will eventually exceed \(t_n\) for any underfitted candidate. Together, conditions (C1) and (C2) ensure that the threshold sequence asymptotically separates the true model (where \(\hat{T}_n\) is small) from underfitted models (where \(\hat{T}_n\) is large).
  \item In practical scenarios, the number of communities \(K_{\max}\) and the number of layers \(L\) are typically bounded or grow very slowly with \(n\). In many applications, one may treat \(K_{\max}\) and \(L\) as constants (i.e., \(O(1)\)) for asymptotic analysis. Under this common setting, we have $\alpha_n = O\left(\sqrt{\frac{\log n}{n}}\right) = o(1),~~\beta_n = O\left(\sqrt{n}\right) \to \infty$. Then condition (C1) becomes \(t_n \gg \sqrt{\frac{\log n}{n}}\), and condition (C2) becomes \(t_n \ll \sqrt{n}\). A simple and convenient choice that satisfies both is \(t_n = n^{-\varepsilon}\) for any \(\varepsilon \in (0, 1/2)\). Indeed, for any $\varepsilon \in (0, 1/2)$, we have 
$\sqrt{\frac{\log n}{n}} = o(n^{-\varepsilon})$
and $n^{-\varepsilon} = o(\sqrt{n}) $. Therefore, under the typical assumption that \(K_{\max}\) and \(L\) are bounded, the default choice \(t_n = n^{-1/5}\) (i.e., \(\varepsilon = 1/5\)) in Algorithm \ref{alg:DiGoF} satisfies both (C1) and (C2). This choice provides a practical balance: it decays slowly enough to avoid premature stopping under the true model, yet shrinks to zero sufficiently fast to ensure that underfitted models are eventually rejected.
\end{itemize}
\end{rem}

Theorem \ref{thm:consistency} provides the formal guarantee that our MLDiGoF procedure is consistent—meaning it recovers the true sender and receiver community counts with probability tending to one as the network size grows. This result is crucial because it transforms the conceptually appealing sequential testing idea into a rigorously justified estimation tool. The proof carefully balances two competing rates: the convergence speed of the test statistic under the true model and its divergence rate under underfitted models. By choosing a threshold sequence that lies between these rates, the algorithm avoids two key pitfalls—premature stopping at an underfitted model and failure to stop at the true one. In practice, this theorem assures users that, under the stated regularity conditions, the method will not be misled by finite-sample fluctuations and will asymptotically locate the correct pair \((K_s, K_r)\). Thus, this theorem not only establishes theoretical credibility but also provides clear guidance for implementing the algorithm in applications where the true community structure is unknown.
\subsection{A ratio-based variant: the MLRDiGoF algorithm}\label{subsec:ratio}
Building on the established dichotomy of the test statistic, we now introduce a ratio-based estimator that identifies the transition point directly within the sequence of candidate models. By examining the ratio of successive test statistics, this method detects a clear peak corresponding to the point where the model first captures the true community structure. We refer to this alternative approach as MLRDiGoF, which provides a different operational perspective rooted in the same theoretical framework. Formally, we define the ratio statistic and present the complete algorithm as follows.

For each \(m \in \{1, 2,\dots,M\}\), let \(\hat{T}_n(m)\) denote the \(\hat{T}_{n}\) computed for the \(m\)-th candidate pair in \(\mathcal{P}\) using Equation (\ref{eq:test_stat}). Define the ratio statistic for the \(m\)-th candidate pair as
\begin{equation}\label{eq:ratio}
r_{m} = \left| \frac{\hat{T}_n(m-1)}{\hat{T}_n(m)} \right|, \quad m=2,3,\ldots, M,
\end{equation}
where the absolute value addresses potential negative value of \(\hat{T}_{n}\) for the true model \((K_s, K_r)\).

Theorems \ref{thm:null} and \ref{thm:power} guarantee that under the true model \((K_s, K_r)\) (which corresponds to a specific position \(m_*\) in the ordered candidate sequence \(\mathcal{P}\)), the upper bound of \(\hat{T}_n(m_*)\) converges to zero with high probability, while \(\hat{T}_n(m_*-1)\) diverges to infinity under underfitting when \(m_* > 1\). Consequently, we shall expect that \(r_{m_*} = |\hat{T}_n(m_*-1)/\hat{T}_n(m_*)|\) is the first significant peak (transition point) in the ratio sequence \(\{r_m\}^{M}_{m=2}\). We refer to this method as multi-layer Ratio-DiGoF (MLRDiGoF for short), which identifies the first peak in the sequence \(\{(m, r_m)\}_{m=2}^M\). The complete MLRDiGoF algorithm is summarized in Algorithm \ref{alg:RDIGoF}.

\begin{algorithm}[H]
\caption{MLRDiGoF}\label{alg:RDIGoF}
\begin{algorithmic}[1]
\Require Multi-layer adjacency matrices \(\{A^{(\ell)}\}_{\ell=1}^L\), thresholds \(t_n>0\) (default: \(n^{-1/5}\)) and \(\tau_n>0\) (default: \(8\log n\)), maximum candidate number \(K_{\mathrm{cand}}\) (default: \(\lfloor \sqrt{n / \log n} \rfloor\)), where \(n\) is the number of nodes
\Ensure Estimated community numbers \((\hat{K}_s, \hat{K}_r)\)
\State Generate candidate sequence \(\mathcal{P} = [(k_s, k_r)]_{m=1}^M\) with \(M = K_{\mathrm{cand}}^2\)
\State Compute \(\hat{T}_n(1)\) for candidate \((1,1)\) via Equation (\ref{eq:test_stat})
\If{\(\hat{T}_n(1) < t_n\)}
    \State \Return \((\hat{K}_{s},\hat{K}_{r})=(1,1)\)
\EndIf
\For{\(m = 2\) to \(M\)}
    \State Compute ratio statistic \(r_m\) via Equation (\ref{eq:ratio})
    \If{\(r_m > \tau_n\)}
        \State \Return \((\hat{K}_s, \hat{K}_r) = \mathcal{P}(m)\)
    \EndIf
\EndFor
\State \Return \((\hat{K}_s, \hat{K}_r) = \mathcal{P}(M)\) \Comment{If no candidate satisfies \(r_m > \tau_n\), return the largest candidate}
\end{algorithmic}
\end{algorithm}

The following theoretical results establish the asymptotic properties and estimation consistency of MLRDiGoF. Theorem \ref{thm:ratio} characterizes the dichotomous behavior of the ratio statistic $r_m$: it diverges at the true model due to the sharp transition of the underlying test statistic, while remaining uniformly bounded for underfitted models. This result, analogous to the asymptotic behavior of the test statistic established in Theorems \ref{thm:null} and \ref{thm:power}, provides the theoretical foundation for distinguishing the true community structure. Building upon this, Theorem \ref{thm:RDiGoF-consistency} establishes the estimation consistency of the MLRDiGoF algorithm, showing that it correctly identifies the true pair $(K_s, K_r)$ with probability tending to one, analogous to Theorem \ref{thm:consistency}.

\begin{thm}[Asymptotic behavior of $r_m$]\label{thm:ratio}
Under Assumptions \ref{assump:a1}--\ref{assump:a4} and condition (A1) of Theorem \ref{thm:power}, with \(K_s\) and \(K_r\) fixed (independent of \(n\)), let \(\{\hat{T}_n(m)\}_{m=1}^M\) be the sequence of test statistics computed via Equation (\ref{eq:test_stat}) for the lexicographically ordered candidate pairs \(\mathcal{P} = \{(k_s^{(m)}, k_r^{(m)})\}_{m=1}^M\) with \(M = K_{\mathrm{cand}}^2\) and \(K_{\mathrm{cand}} \ge \max(K_s, K_r)\). Let \(m_*\) be the index of the true pair \((K_s, K_r)\) in \(\mathcal{P}\). Define the ratio statistic for \(m = 2,\dots,M\) as in Equation (\ref{eq:ratio}). Then, we have
\begin{enumerate}
    \item (Divergence at the true model) For any fixed \(M_0 > 0\),
        \[
        \lim_{n \to \infty} \mathbb{P}\bigl( r_{m_*} > M_0 \bigr) = 1.
        \]
    \item (Boundedness under underfitting) There exists a constant \(C > 0\) (depending only on \(\delta, c_0, \eta\) from Assumptions \ref{assump:a1}, \ref{assump:a2} and condition (A1), but not on \(n\), \(L\), or the candidate index) such that for every \(m < m_*\),
        \[
        \lim_{n \to \infty} \mathbb{P}\bigl( r_m > C \bigr) = 0.
        \]
\end{enumerate}
\end{thm}

\begin{thm}[Consistency of the MLRDiGoF algorithm]\label{thm:RDiGoF-consistency}
Assume the conditions of Theorem \ref{thm:ratio} hold, i.e., Assumptions \ref{assump:a1}--\ref{assump:a4} and condition (A1) of Theorem \ref{thm:power} hold, with \(K_s\) and \(K_r\) fixed (independent of \(n\)). Let \(\tau_n\) be the threshold used in Algorithm \ref{alg:RDIGoF}. Suppose \(\tau_n\) satisfies the following three conditions:
\begin{enumerate}
    \item[(D1)] There exists a constant \(C_0 > 0\) and \(n_0 \in \mathbb{N}\) such that for all \(n \ge n_0\), \(\tau_n > C_0\), where \(C_0\) is any constant greater than the constant \(C\) from Theorem \ref{thm:ratio} part 2 (i.e., \(C_0 > C\)).
    \item[(D2)] \(\displaystyle \tau_n = o\left( \sqrt{\frac{n}{\log n}} \right)\).
\end{enumerate}
Then the output \((\hat{K}_s, \hat{K}_r)\) of Algorithm \ref{alg:RDIGoF} satisfies
\[
\lim_{n \to \infty} \mathbb{P}\bigl( (\hat{K}_s, \hat{K}_r) = (K_s, K_r) \bigr) = 1.
\]
\end{thm}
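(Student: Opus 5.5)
We prove consistency of Algorithm~\ref{alg:RDIGoF} by intersecting a fixed, finite number of high-probability events. Because \(K_s,K_r\) are fixed, the true index \(m_*\) is a fixed integer independent of \(n\), so only the finitely many candidates \(m\le m_*\) matter. The output equals \((K_s,K_r)\) exactly when three things hold. First, the early exit at \(m=1\) fires if and only if \(m_*=1\). Second, \(r_m\le\tau_n\) for all \(2\le m<m_*\). Third, \(r_{m_*}>\tau_n\).

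The plan is to show that each of these finitely many events has probability tending to one. A union bound over at most \(m_*+1\) events then gives the claim.

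\textbf{Step 1: the initial check at \(m=1\).} If \(m_*=1\), i.e.\ \((K_s,K_r)=(1,1)\), the argument in the proof of Theorem~\ref{thm:consistency} applies. We need \(\hat{T}_n(1)\le \sigma_1(R)-2+\|\hat R-R\| \), with \(\sigma_1(R)-2\le O_P(\sqrt{\log n/n})\) (from the proof of Lemma~\ref{ideal0}) and \(\|\hat R-R\|=O_P(\sqrt{L\log n/n})\) by Lemma~\ref{lem:norm-error}. Then \(\hat{T}_n(1)<t_n\) with probability tending to one for the default \(t_n=n^{-1/5}\). If \(m_*>1\), candidate \((1,1)\) underfits, so Theorem~\ref{thm:power} gives \(\hat{T}_n(1)\xrightarrow{P}\infty\), and the algorithm does not stop at \(m=1\). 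Condition (A2) holds automatically here because \(K_{\max}\) is fixed.

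\textbf{Step 2: no premature stop for \(2\le m<m_*\).} By Theorem~\ref{thm:ratio} part 2, \(\mathbb{P}(r_m>C)\to0\) for each such \(m\). Condition (D1) gives \(\tau_n>C_0>C\) for \(n\ge n_0\), so \(\mathbb{P}(r_m>\tau_n)\le\mathbb{P}(r_m>C)\to0\). Since there are fewer than \(m_*\) such indices, a union bound finishes this step.

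\textbf{Step 3: stopping at \(m_*\).} This is the main obstacle, because \(\tau_n\) may diverge, whereas Theorem~\ref{thm:ratio} part 1 only gives divergence past every \emph{fixed} \(M_0\). I would therefore quantify the two rates directly.

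For the numerator, \(m_*-1\) underfits. The proof of Theorem~\ref{thm:power} shows \(\hat{T}_n(m_*-1)\ge c\,\beta_n=c\sqrt{nL}\) with probability tending to one, for some \(c>0\), since \(K_{\max}\) is fixed.

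For the denominator, we need an upper bound on \(|\hat{T}_n(m_*)|\). The upper tail follows from Step 1: \(\hat{T}_n(m_*)\le O_P(\sqrt{L\log n/n})\). The lower side is delicate because \(\hat{T}_n(m_*)\) may be negative. I would use the lower bound \(\sigma_1(\hat R)\ge\sigma_1(R)-\|\hat R-R\|\) together with the standard fact that \(\sigma_1(R)\to2\) in probability for a non-symmetric matrix with independent, variance-\(1/(n-1)\), bounded-ratio entries. This gives \(|\hat{T}_n(m_*)|=o_P(1)\), which I would strengthen to \(O_P(\sqrt{\log n/n})\) at least along the event needed.

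The ratio then satisfies
\[
r_{m_*}\ \ge\ \frac{c\sqrt{nL}}{O_P(\sqrt{\log n/n})}\ \gg\ \sqrt{n/\log n},
\]
and by (D2) this exceeds \(\tau_n\) with probability tending to one. Even a cruder denominator bound of \(o_P(1)\) yields \(r_{m_*}\gtrsim\sqrt{n}\cdot\omega_n\) with \(\omega_n\to\infty\), which still dominates \(\tau_n=o(\sqrt{n/\log n})\). So (D2) is amply sufficient, provided the numerator's \(\sqrt{nL}\) growth rate is taken from the proof of Theorem~\ref{thm:power}.

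Combining Steps 1–3 by a union bound over at most \(m_*+1\) events yields \(\mathbb{P}((\hat K_s,\hat K_r)=(K_s,K_r))\to1\).
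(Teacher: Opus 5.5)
Your proposal is correct and is essentially the paper's proof: the same three events (no exit at $m=1$ via Theorem~\ref{thm:power}, $r_m\le\tau_n$ for $2\le m<m_*$ via Theorem~\ref{thm:ratio} part 2 and (D1), and $r_{m_*}>\tau_n$ from a numerator of order $\sqrt{nL}$ over a small denominator), combined by a union bound over the fixed number of candidates up to $m_*$.

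Two refinements are worth making. First, at $m_*$ you need neither $\sigma_1(R)\to 2$ nor any rate: $\hat{T}_n(m_*)\ge -2$ deterministically, and $\hat{T}_n(m_*)<2$ with probability tending to one by Theorem~\ref{thm:null}, so $r_{m_*}\ge c\sqrt{nL}/2\gg\tau_n$. This also avoids the paper's two-sided bound $|\hat{T}_n(m_*)|\le M_0\sqrt{L\log n/n}$, whose lower tail \eqref{eqp1} does not provide. Second, for $m_*=1$ the upper-tail rate from \eqref{eqp1} and Lemma~\ref{lem:norm-error} is $\sqrt{L\log n/n}$, not $\sqrt{\log n/n}$, so you should state explicitly the condition $\sqrt{L\log n/n}=o(t_n)$, which the theorem leaves implicit. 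The default $t_n=n^{-1/5}$ meets this only when $L\log n=o(n^{3/5})$, and the paper's shortcut of setting $\epsilon=t_n$ in the fixed-$\epsilon$ Theorem~\ref{thm:null} does not justify it either.
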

\begin{rem}[Interpretation of conditions]\label{rem:conditions-RDiGoF}
Conditions (D1) and (D2) are minimal requirements that together guarantee the threshold sequence \(\tau_n\) separates underfitted models from the true model. 
\begin{itemize}
  \item Condition (D1) demands that \(\tau_n\) eventually exceed the uniform bound \(C\) established in Theorem \ref{thm:ratio} for underfitted ratios. This ensures that, with high probability, no underfitted candidate will produce a ratio \(r_m\) larger than \(\tau_n\), thereby preventing false early stops. The strict inequality \(C_0>C\) avoids the degenerate situation where \(\tau_n\) exactly equals the bound, which could lead to unstable stopping behavior in finite samples.
  \item Condition (D2) requires that \(\tau_n\) grow slower than \(\sqrt{n/\log n}\). This is because the ratio at the true model satisfies \(r_{m_*}\gtrsim n/\sqrt{\log n}\) asymptotically (see the proof of Theorem \ref{thm:RDiGoF-consistency}). If \(\tau_n\) grew as fast as or faster than this rate, it could dominate \(r_{m_*}\) and prevent the algorithm from stopping at the true pair. Condition (D2) therefore guarantees that the exploding signal at the true model eventually surpasses the threshold.
  \item The default choice \(\tau_n=8\log n\) satisfies both conditions. Since \(\log n\to\infty\), it eventually exceeds any fixed constant \(C\) (satisfying condition (D1)). Moreover, \(\log n = o\bigl(\sqrt{n/\log n}\bigr)\) for any positive constant, so condition (D2) holds. This choice is simple, practical, and meets the theoretical requirements across a wide range of network sizes.
\end{itemize}
\end{rem}
Theorem \ref{thm:RDiGoF-consistency} provides the consistency guarantee for the ratio‑based MLRDiGoF algorithm under the assumption of a fixed number of communities. It confirms that the sharp transition in the ratio statistic—bounded for underfitted models and divergent at the true model—serves as a reliable selection criterion. While analogous to Theorem \ref{thm:consistency} in establishing consistency, Theorem \ref{thm:RDiGoF-consistency} validates a distinct, intuitive approach that detects a relative peak rather than an absolute threshold, offering a complementary perspective for model selection in the common scenario where community counts are small relative to network size.
\begin{rem}
The assumption that \(K_{\max}\) is fixed (i.e., does not grow with \(n\)) in Theorems \ref{thm:ratio} and \ref{thm:RDiGoF-consistency} is made to preserve the analytical sharpness of the ratio-based procedure. In the proofs of Theorems \ref{thm:ratio} and \ref{thm:RDiGoF-consistency}, the uniform bound \(C\) for underfitted ratios and the separation rate governing the divergence of \(r_{m_*}\) both depend on constants derived from the model parameters (such as \(\delta\), \(c_0\), and \(\eta\)), which are independent of \(K_{\max}\) only when \(K_{\max}\) is fixed. If \(K_{\max}\) were allowed to increase with \(n\), these key quantities would become functions of \(n\), intricately coupling the growth rates of the signal and noise terms in the ratio statistic. Consequently, the clean phase transition at the true model—where \(r_m\) stays bounded for all underfitted models and explodes only at the true pair—would be obscured. Moreover, choosing a single threshold sequence \(\tau_n\) that robustly separates these two regimes across all candidate pairs would require balancing multiple orders of growth, making the theory unnecessarily complex. Fixing \(K_{\max}\) isolates the core logic of the ratio test and delivers a transparent consistency guarantee, which matches the typical practical setting where the numbers of asymmetric communities are small relative to the network size.
\end{rem}

Both the MLDiGoF and MLRDiGoF procedures provide consistent estimators for the asymmetric community numbers under their respective theoretical conditions, as established in Theorems \ref{thm:consistency} and \ref{thm:RDiGoF-consistency}. However, the ratio-based MLRDiGoF tends to be more robust in practice. This robustness stems from a fundamental difference in the underlying detection logic. MLDiGoF relies on a level-crossing rule: it selects the model where the goodness-of-fit statistic $\hat{T}_n$ first falls below a vanishing threshold $t_n$. This decision is sensitive to the precise finite-sample value of $\hat{T}_n$, which itself is an estimate perturbed from its oracle counterpart by an error of order $\|\hat{R} - R\|$. Consequently, any systematic estimation bias can shift the crossing point. In contrast, MLRDiGoF implements a change-point detection strategy. It monitors the ratio statistic $r_m = |\hat{T}_n(m-1)/\hat{T}_n(m)|$, seeking a pronounced peak. Theorem \ref{thm:ratio} justifies this approach by showing that $r_m$ remains stochastically bounded for every underfitted model, yet diverges at the true model. Thus, instead of judging a noisy statistic against a decaying benchmark, MLRDiGoF identifies a clear structural break in the sequence of fits---a criterion that is invariant to any common bias affecting all $\hat{T}_n(m)$ similarly. For the common setting where the numbers of asymmetric communities are small, detecting this structural break often provides a more stable empirical criterion than judging the absolute level of a noisy statistic against a vanishing threshold.
\section{Numerical Experiments}\label{sec:simulation}
In this section, we evaluate the performance of the MLDiGoF and MLRDiGoF algorithms by simulations under the multi-layer stochastic co-block model and one real data example. 

We generate multi-layer directed networks from the ML-ScBM defined in Definition \ref{def:ML-ScBM} with the following detailed specifications. For all simulations, the sender (and the receiver) community assignments are generated by letting
each node belong to each sender (receiver) community with equal probability for all experiments.  

For each layer \(\ell = 1, \dots, L\), we generate a completely independent block probability matrix \(B^{(\ell)} \in [0,1]^{K_s \times K_r}\) as follows. First, we generate layer‑specific parameters independently for each layer: a diagonal strength parameter \(\alpha^{(\ell)} \sim \text{Uniform}[0.6, 0.8]\); an off‑diagonal base strength parameter \(\beta^{(\ell)} \sim \text{Uniform}[0.1, 0.3]\); and an additional medium strength parameter \(\gamma^{(\ell)} \sim \text{Uniform}[0.4, 0.6]\). Then, we define the base matrix \(H^{(\ell)}_0 \in \mathbb{R}^{K_s \times K_r}\) by
\[
H^{(\ell)}_0(k,l) = 
\begin{cases}
\alpha^{(\ell)}, & \text{if } k = l, \\[4pt]
\gamma^{(\ell)}, & \text{if } l = \bigl(k + K_s - 1 \bigr) \bmod K_r + 1, \\[4pt]
\beta^{(\ell)}, & \text{otherwise},
\end{cases}
\]
where \(\bmod\) denotes the modulo operation. This construction guarantees that, besides the diagonal, each sender community has a designated receiver community (cyclically shifted by \(K_s\)) that receives edges with medium probability, thereby creating distinct connectivity profiles for different receiver communities and ensuring that both the sender and receiver versions of condition (A1) are satisfied with high probability. To preserve asymmetry even when \(K_s = K_r\), we add an asymmetric perturbation. Define a matrix \(H^{(\ell)}_1 \in \mathbb{R}^{K_s \times K_r}\) whose entries are independent draws from \(\text{Uniform}[-0.1, 0.1]\). We then form \(\widetilde{B}^{(\ell)}(k,l) = H^{(\ell)}_0(k,l) + H^{(\ell)}_1(k,l)\) and clip every entry of \(\widetilde{B}^{(\ell)}\) to the interval \([0,1]\). 

Finally, let \(\rho \in (0,1)\) be a global sparsity parameter that controls the overall edge density across all layers. We scale the matrices by \(\rho\) to obtain the block‑probability matrices
\(B^{(\ell)}(k,l) = \rho \cdot \widetilde{B}^{(\ell)}(k,l), \ell = 1,\dots,L .\) We consider \(\rho \in \{0.05, 0.1, 0.2, 0.3, 0.4\}\) to examine networks with varying edge densities, from very sparse (\(\rho = 0.05\)) to moderately dense (\(\rho = 0.4\)). Then, for each layer \(\ell = 1, \dots, L\), we generate the adjacency matrix \(A^{(\ell)}\) with independent entries by Definition \ref{def:ML-ScBM}.

For all experiments, we use the following consistent evaluation metrics and default settings. The primary evaluation metric is the \textbf{accuracy}, defined as the proportion of Monte Carlo replications in which the algorithm correctly estimates both community numbers:
\[
\text{Accuracy} = \frac{\text{Number of replications with } (\hat{K}_s, \hat{K}_r) = (K_s, K_r)}{\text{Total number of replications}}.
\]
\subsection{Experiment 1: Behavior of test statistic $\hat{T}_n$ under null and alternative hypotheses}\label{subsec:exp1}
This experiment systematically verifies the theoretical properties of the test statistic \(\hat{T}_n\) under a comprehensive set of hypothesis testing scenarios as stated in Theorems \ref{thm:null} and \ref{thm:power}. We fix the number of layers \(L = 20\) and the global sparsity parameter \(\rho = 0.2\). 

We consider the true asymmetric community structure \((K_s, K_r) = (3, 5)\), which provides a representative case of moderate asymmetry with \(K_s < K_r\). To thoroughly validate the theoretical predictions, we examine the following four hypothesis testing scenarios that cover all underfitting cases implied by Theorem \ref{thm:power}.

\begin{itemize}
    \item Null hypothesis \(H_0\) (correct specification): candidate pair equals the true structure, \((K_{s0}, K_{r0}) = (3, 5)\).
    \item Alternative \(H_1\): sender-only underfitting: candidate has insufficient sender communities but correct receiver communities, \((K_{s0}, K_{r0}) = (2, 5)\).
    \item Alternative \(H_1\): receiver-only underfitting: candidate has insufficient receiver communities but correct sender communities, \((K_{s0}, K_{r0}) = (3, 4)\).
    \item Alternative \(H_1\): both-side underfitting: candidate has insufficient sender and receiver communities, \((K_{s0}, K_{r0}) = (2, 4)\).
\end{itemize}

We vary the network size \(n \in \{200,400, 600, 800, 1000\}\). For each \(n\) and each hypothesis scenario, we generate 200 independent networks and report the mean and standard deviation of \(\hat{T}_n\) over the 200 replications.

The results, detailed in Table \ref{tab:stat_behavior_comprehensive}, strongly support Theorems \ref{thm:null} and \ref{thm:power}. Under the correctly specified null hypothesis \((K_s, K_r) = (3,5)\), the absolute value of the test statistic \(\hat{T}_n\) converges to zero as \(n\) increases, with standard deviations also decreasing. Under all three underfitting alternatives, \(\hat{T}_n\) diverges. The mean values increase monotonically with \(n\). The divergence is most pronounced for both-side underfitting, followed by sender-only underfitting, while receiver-only underfitting exhibits a slower divergence. These results validate Theorem \ref{thm:power}, demonstrating that \(\hat{T}_n\) diverges under any form of underfitting, ensuring a clear separation from the null.

\begin{table}[!ht]
\centering
\caption{Behavior of the test statistic \(\hat{T}_n\) under various hypotheses for true structure \((K_s,K_r)=(3,5)\). Values are mean (standard deviation) over 200 replications.}
\label{tab:stat_behavior_comprehensive}
\begin{tabular}{c|cccc}
\hline
 & \multicolumn{4}{c}{\(\hat{T}_n\)} \\
\cline{2-5}
\(n\) & \(H_0\): (3,5) & \(H_1\): (2,5) & \(H_1\): (3,4) & \(H_1\): (2,4) \\ \hline
200 & -0.014(0.024) & 3.259 (0.229) & 0.165 (0.112) & 3.324 (0.277)\\
400 & -0.012(0.014) & 5.270 (0.339) & 0.626 (0.357) & 5.458 (0.408)\\
600 & -0.008 (0.010) & 6.900 (0.437) & 0.893 (0.262) & 7.187 (0.489)\\
800 & -0.007 (0.010) & 8.282 (0.516) & 1.247 (0.322) & 8.629 (0.546)\\
1000 & -0.007 (0.006) & 9.438 (0.627) & 1.493 (0.384) & 9.769 (0.709) \\ \hline
\end{tabular}
\end{table}

\subsection{Experiment 2: Statistical discrimination power and robustness analysis}\label{subsec:exp2}
This experiment evaluates the statistical discrimination power of the test statistic \(\hat{T}_n\) across a comprehensive set of asymmetric community structures. The primary goal is to verify that this statistic perfectly discriminates between correctly specified models and underfitted models in practical settings. Fix the network size \(n = 800\), number of layers \(L = 15\), and global sparsity parameter \(\rho = 0.2\). We consider eight true asymmetric community structures covering various asymmetry patterns:
\[
(K_s, K_r) \in \{(2,3), (2,4), (3,2), (3,4), (3,5), (4,3), (4,5), (5,4)\},
\]
where these structures include cases like \(K_s < K_r\), \(K_s > K_r\), and \(|K_s - K_r| > 1\). For each true structure, we evaluate \(\hat{T}_n\) under four scenarios:
\begin{enumerate}
    \item Correct specification (\(H_0\)): candidate pair equals the true structure.
    \item Sender-only underfitting (\(H_1^{(s)}\)): candidate has one fewer sender community.
    \item Receiver-only underfitting (\(H_1^{(r)}\)): candidate has one fewer receiver community.
    \item Both-side underfitting (\(H_1^{(b)}\)): candidate has one fewer sender and receiver communities.
\end{enumerate}

For each configuration (true community structure $\times$ hypothesis scenario), we generate 200 independent networks. The evaluation of the test statistic \(\hat{T}_n\) follows the following decision rule as defined in the MLDiGoF algorithm:

\begin{itemize}
    \item For the $\hat{T}_n$ statistic (used in MLDiGoF), we compute $\hat{T}_n$ for the specified candidate pair $(K_{s0}, K_{r0})$ via Equation (\ref{eq:test_stat}) using Algorithm \ref{alg:spectral}. The decision is correct if either:
    \begin{itemize}
        \item Under $H_0$ (correct specification): $\hat{T}_n < t_n$, where $t_n = n^{-1/5} \approx 0.0043$ for $n=800$.
        \item Under $H_1$ (any underfitting): $\hat{T}_n \geq t_n$.
    \end{itemize}
    We record the empirical probability of correct decisions over the 200 replications, along with its standard error.
\end{itemize}

Table \ref{tab:Tn_discrimination} presents the discrimination performance. Across all eight asymmetric community structures and all three underfitting types, \(\hat{T}_n\) achieves perfect discrimination, with empirical probabilities of 1.00. This provides strong evidence that:
\begin{itemize}
    \item Under correct specification, \(\hat{T}_n\) consistently falls below the threshold \(t_n = n^{-1/5} \approx 0.0043\).
    \item Under any form of underfitting (sender-only, receiver-only, or both), \(\hat{T}_n\) consistently exceeds \(t_n\).
    \item The discrimination power is robust to the direction and degree of asymmetry between sender and receiver communities.
\end{itemize}

Thus, \(\hat{T}_n\) perfectly distinguishes between correctly specified and underfitted models in these settings.
\begin{table}[!ht]
\centering
\caption{Discrimination power of the \(\hat{T}_n\) statistic. Values are empirical probabilities (standard errors) over 200 replications.}
\label{tab:Tn_discrimination}
\scalebox{0.85}{
\begin{tabular}{c|c|c|c|c}
\hline
True \((K_s, K_r)\) & Hypothesized \((K_{s0}, K_{r0})\) & Underfitting type & \(\mathbb{P}(\hat{T}_n < t_n)\) for \(H_0\) & \(\mathbb{P}(\hat{T}_n \geq t_n)\) for \(H_1\) \\ \hline
\multirow{4}{*}{\((2,3)\)} & \((2,3)\) & True model (\(H_0\)) & 1.00 (0.000) & -- \\
& \((1,3)\) & Sender only (\(H_1^{(s)}\)) & -- & 1.00 (0.000) \\
& \((2,2)\) & Receiver only (\(H_1^{(r)}\)) & -- & 1.00 (0.000) \\
& \((1,2)\) & Both (\(H_1^{(b)}\)) & -- & 1.00 (0.000) \\ \hline
\multirow{4}{*}{\((2,4)\)} & \((2,4)\) & True model (\(H_0\)) & 1.00 (0.000) & -- \\
& \((1,4)\) & Sender only (\(H_1^{(s)}\)) & -- & 1.00 (0.000) \\
& \((2,3)\) & Receiver only (\(H_1^{(r)}\)) & -- & 1.00 (0.000) \\
& \((1,3)\) & Both (\(H_1^{(b)}\)) & -- & 1.00 (0.000) \\ \hline
\multirow{4}{*}{\((3,2)\)} & \((3,2)\) & True model (\(H_0\)) & 1.00 (0.000) & -- \\
& \((2,2)\) & Sender only (\(H_1^{(s)}\)) & -- & 1.00 (0.000) \\
& \((3,1)\) & Receiver only (\(H_1^{(r)}\)) & -- & 1.00 (0.000) \\
& \((2,1)\) & Both (\(H_1^{(b)}\)) & -- & 1.00 (0.000) \\ \hline
\multirow{4}{*}{\((3,4)\)} & \((3,4)\) & True model (\(H_0\)) & 1.00 (0.000) & -- \\
& \((2,4)\) & Sender only (\(H_1^{(s)}\)) & -- & 1.00 (0.000) \\
& \((3,3)\) & Receiver only (\(H_1^{(r)}\)) & -- & 1.00 (0.000) \\
& \((2,3)\) & Both (\(H_1^{(b)}\)) & -- & 1.00 (0.000) \\ \hline
\multirow{4}{*}{\((3,5)\)} & \((3,5)\) & True model (\(H_0\)) & 1.00 (0.000) & -- \\
& \((2,5)\) & Sender only (\(H_1^{(s)}\)) & -- & 1.00 (0.000) \\
& \((3,4)\) & Receiver only (\(H_1^{(r)}\)) & -- & 1.00 (0.000) \\
& \((2,4)\) & Both (\(H_1^{(b)}\)) & -- & 1.00 (0.000) \\ \hline
\multirow{4}{*}{\((4,3)\)} & \((4,3)\) & True model (\(H_0\)) & 1.00 (0.000) & -- \\
& \((3,3)\) & Sender only (\(H_1^{(s)}\)) & -- & 1.00 (0.000) \\
& \((4,2)\) & Receiver only (\(H_1^{(r)}\)) & -- & 1.00 (0.000) \\
& \((3,2)\) & Both (\(H_1^{(b)}\)) & -- & 1.00 (0.000) \\ \hline
\multirow{4}{*}{\((4,5)\)} & \((4,5)\) & True model (\(H_0\)) & 1.00 (0.000) & -- \\
& \((3,5)\) & Sender only (\(H_1^{(s)}\)) & -- & 1.00 (0.000) \\
& \((4,4)\) & Receiver only (\(H_1^{(r)}\)) & -- & 1.00 (0.000) \\
& \((3,4)\) & Both (\(H_1^{(b)}\)) & -- & 1.00 (0.000) \\ \hline
\multirow{4}{*}{\((5,4)\)} & \((5,4)\) & True model (\(H_0\)) & 1.00 (0.000) & -- \\
& \((4,4)\) & Sender only (\(H_1^{(s)}\)) & -- & 1.00 (0.000) \\
& \((5,3)\) & Receiver only (\(H_1^{(r)}\)) & -- & 1.00 (0.000) \\
& \((4,3)\) & Both (\(H_1^{(b)}\)) & -- & 1.00 (0.000) \\ \hline
\end{tabular}
}
\end{table}
\subsection{Experiment 3: Estimation accuracy under varied network sizes and sparsity levels}
This experiment evaluates the estimation accuracy of MLDiGoF and MLRDiGoF across a wide range of network sizes, global sparsity levels, and asymmetric community structures. We set $L=15$, network sizes \(n \in \{200, 400, 600, 800, 1000\}\), global sparsity parameters \(\rho \in \{0.1, 0.2, 0.3, 0.4,0.5\}\), and several true asymmetric community structures: \((K_s, K_r) \in \{(1,1), (1,3), (2,2), (2,3), (2,4), (3,4), (3,5), (4,4), (4,5)\}\). For each combination of parameters, we generate 200 independent networks and run both MLDiGoF and MLRDiGoF with their default settings. For each replication, we record the estimated community numbers \((\hat{K}_s, \hat{K}_r)\) and compute the accuracy as defined earlier.

The accuracy results are shown in Table \ref{tab:accuracy_summary}. Several key trends emerge:
\begin{itemize}
    \item For fixed sparsity \(\rho\) and true community structure \((K_s, K_r)\), accuracy improves monotonically with network size \(n\), confirming the consistency results in Theorems \ref{thm:consistency} and \ref{thm:RDiGoF-consistency}. 
    \item For fixed \(n\) and \((K_s, K_r)\), accuracy improves with increasing global sparsity \(\rho\). The improvement is particularly dramatic for complex asymmetric structures. For instance, for \((K_s, K_r) = (3,5)\) with \(n=600\), accuracy jumps from 0.28 at \(\rho=0.1\) to 0.95 at \(\rho=0.2\) and 1.00 at \(\rho \ge 0.3\) for the MLDiGoF method.
    \item Structures with larger total communities \(K_s + K_r\) are inherently more challenging to estimate. For example, at \(n=600\) and \(\rho=0.2\), the accuracy for the relatively simple structure \((2,3)\) is 1.00, while for the more complex \((3,5)\) it is 0.95 for MLDiGoF and 1.00 for MLRDiGoF.
    \item MLRDiGoF generally matches or slightly outperforms MLDiGoF, particularly in sparse and challenging settings. For \((3,5)\) with \(n=600\) and \(\rho=0.2\), MLRDiGoF achieves an accuracy of 1.00 compared to 0.95 for MLDiGoF. For \((2,4)\) with \(n=200\) and \(\rho=0.3\), MLRDiGoF accuracy is 0.95 while MLDiGoF is 0.68, highlighting the robustness of the ratio-based approach.
\end{itemize}

Overall, both algorithms demonstrate consistent estimation performance across diverse asymmetric community structures, with accuracy approaching 1 as either \(n\) or \(\rho\) increases, confirming the consistency theorems.
\begin{table}[htbp]
\centering
\caption{Estimation accuracy of MLDiGoF and MLRDiGoF under selected challenging asymmetric settings. Values are accuracy over 100 replications.}\label{tab:accuracy_summary}
\scriptsize
\setlength{\tabcolsep}{2pt}
\begin{tabular}{@{}cc ccccc ccccc@{}}
\toprule
\multirow{2}{*}{$(K_s, K_r)$} & \multirow{2}{*}{$n$} & \multicolumn{5}{c}{MLDiGoF} & \multicolumn{5}{c}{MLRDiGoF} \\
\cmidrule(lr){3-7} \cmidrule(lr){8-12}
 & & $\rho=0.1$ & $\rho=0.2$ & $\rho=0.3$ & $\rho=0.4$& $\rho=0.5$ & $\rho=0.1$ & $\rho=0.2$ & $\rho=0.3$ & $\rho=0.4$ & $\rho=0.5$\\
\midrule
\multirow{5}{*}{ (1,1) } 
& 200  & 1.00 & 1.00 & 1.00 & 1.00 & 1.00 & 1.00 & 1.00 & 1.00 &1.00&1.00\\
& 400  & 1.00 & 1.00 & 1.00 & 1.00 & 1.00 & 1.00 & 1.00 & 1.00 &1.00&1.00\\
& 600  & 1.00 & 1.00 & 1.00 & 1.00 & 1.00 & 1.00 & 1.00 & 1.00 &1.00&1.00\\
& 800  & 1.00 & 1.00 & 1.00 & 1.00 & 1.00 & 1.00 & 1.00 & 1.00 &1.00&1.00\\
& 1000 & 1.00 & 1.00 & 1.00 & 1.00 & 1.00 & 1.00 & 1.00 & 1.00 &1.00&1.00\\
\addlinespace
\hline
\multirow{5}{*}{ (1,3) } 
& 200  & 0.68 & 0.99 & 1.00 & 1.00 & 1.00 & 1.00 & 1.00 & 1.00 &1.00&1.00\\
& 400  & 0.98 & 1.00 & 1.00 & 1.00 & 1.00 & 1.00 & 1.00 & 1.00 &1.00&1.00\\
& 600  & 1.00 & 1.00 & 1.00 & 1.00 & 1.00 & 1.00 & 1.00 & 1.00 &1.00&1.00\\
& 800  & 1.00 & 1.00 & 1.00 & 1.00 & 1.00 & 1.00 & 1.00 & 1.00 &1.00&1.00\\
& 1000 & 1.00 & 1.00 & 1.00 & 1.00 & 1.00 & 1.00 & 1.00 & 1.00 &1.00&1.00\\
\addlinespace
\hline   
\multirow{5}{*}{ (2,2) } 
& 200  & 1.00 & 1.00 & 1.00 & 1.00 & 1.00 & 1.00 & 1.00 & 1.00 &1.00&1.00\\
& 400  & 1.00 & 1.00 & 1.00 & 1.00 & 0.98 & 1.00 & 1.00 & 1.00 &1.00&1.00\\
& 600  & 1.00 & 1.00 & 1.00 & 1.00 & 1.00 & 1.00 & 1.00 & 1.00 &1.00&1.00\\
& 800  & 1.00 & 1.00 & 1.00 & 1.00 & 1.00 & 1.00 & 1.00 & 1.00 &1.00&1.00\\
& 1000 & 1.00 & 1.00 & 1.00 & 1.00 & 1.00 & 1.00 & 1.00 & 1.00 &1.00&1.00\\
\addlinespace
\hline   
\multirow{5}{*}{ (2,3) } 
& 200  & 0.90 & 1.00 & 1.00 & 1.00 & 1.00 & 0.96 & 1.00 & 1.00 &1.00&1.00\\
& 400  & 1.00 & 1.00 & 1.00 & 1.00 & 1.00 & 1.00 & 1.00 & 1.00 &1.00&1.00\\
& 600  & 1.00 & 1.00 & 1.00 & 1.00 & 1.00 & 1.00 & 1.00 & 1.00 &1.00&1.00\\
& 800  & 1.00 & 1.00 & 1.00 & 1.00 & 1.00 & 1.00 & 1.00 & 1.00 &1.00&1.00\\
& 1000 & 1.00 & 1.00 & 1.00 & 1.00 & 1.00 & 1.00 & 1.00 & 1.00 &1.00&1.00\\
\addlinespace
\hline
\multirow{5}{*}{ (2,4) } 
& 200  & 0.00 & 0.36 & 0.68 & 0.95 & 1.00 & 0.15 & 0.85 & 0.95 &1.00&1.00\\
& 400  & 0.32 & 0.94 & 1.00 & 0.99 & 1.00 & 0.50 & 1.00 & 1.00 &1.00&1.00\\
& 600  & 0.85 & 0.99 & 1.00 & 1.00 & 1.00 & 1.00 & 1.00 & 1.00 &1.00&1.00\\
& 800  & 0.96 & 1.00 & 1.00 & 1.00 & 1.00 & 1.00 & 1.00 & 1.00 &1.00&1.00\\
& 1000 & 1.00 & 1.00 & 1.00 & 1.00 & 1.00 & 1.00 & 1.00 & 1.00 &1.00&1.00\\
\addlinespace
\hline
\multirow{5}{*}{ (3,4) } 
& 200  & 0.85 & 1.00 & 1.00 & 1.00 & 1.00 & 0.79 & 1.00 & 1.00 &1.00&1.00\\
& 400  & 1.00 & 1.00 & 1.00 & 1.00 & 1.00 & 1.00 & 1.00 & 1.00 &1.00&1.00\\
& 600  & 1.00 & 1.00 & 1.00 & 1.00 & 1.00 & 1.00 & 1.00 & 1.00 &1.00&1.00\\
& 800  & 1.00 & 1.00 & 1.00 & 1.00 & 1.00 & 1.00 & 1.00 & 1.00 &1.00&1.00\\
& 1000 & 1.00 & 1.00 & 1.00 & 1.00 & 1.00 & 1.00 & 1.00 & 1.00 &1.00&1.00\\
\addlinespace
\hline
\multirow{5}{*}{ (3,5) } 
& 200  & 0.02 & 0.02 & 0.27 & 0.68 & 0.90 & 0.20 & 0.40 & 0.85 &0.95&1.00\\
& 400  & 0.01 & 0.63 & 0.91 & 0.990 & 1.00 & 0.45 & 1.00 & 1.00 &1.00&1.00\\
& 600  & 0.28 & 0.95 & 1.00 & 1.00 & 1.00 & 0.75 & 1.00 & 1.00 &1.00&1.00\\
& 800  & 0.62 & 1.00 & 1.00 & 1.00 & 1.00 & 1.00 & 1.00 & 1.00 &1.00&1.00\\
& 1000 & 0.87 & 1.00 & 1.00 & 1.00 & 1.00 & 1.00 & 1.00 & 1.00 &1.00&1.00\\
\addlinespace
\hline
\multirow{5}{*}{ (4,4) } 
& 200  & 0.89 & 1.00 & 1.00 & 1.00 & 1.00 & 0.13 & 0.82 & 0.92 &0.96&0.98\\
& 400  & 1.00 & 1.00 & 1.00 & 1.00 & 1.00 & 0.92 & 1.00 & 1.00 &1.00&1.00\\
& 600  & 1.00 & 1.00 & 1.00 & 1.00 & 1.00 & 1.00 & 1.00 & 1.00 &1.00&1.00\\
& 800  & 1.00 & 1.00 & 1.00 & 1.00 & 1.00 & 1.00 & 1.00 & 1.00 &1.00&1.00\\
& 1000 & 1.00 & 1.00 & 1.00 & 1.00 & 1.00 & 1.00 & 1.00 & 1.00 &1.00&1.00\\
\addlinespace
\hline
\multirow{5}{*}{ (4,5) } 
& 200  & 0.21 & 0.98 & 1.00 & 1.00 & 1.00 & 0.00 & 0.71 & 0.97 &1.00&1.00\\
& 400  & 0.98 & 1.00 & 1.00 & 1.00 & 1.00 & 0.56 & 1.00 & 1.00 &1.00&1.00\\
& 600  & 1.00 & 1.00 & 1.00 & 1.00 & 1.00 & 1.00 & 1.00 & 1.00 &1.00&1.00\\
& 800  & 1.00 & 1.00 & 1.00 & 1.00 & 1.00 & 1.00 & 1.00 & 1.00 &1.00&1.00\\
& 1000 & 1.00 & 1.00 & 1.00 & 1.00 & 1.00 & 1.00 & 1.00 & 1.00 &1.00&1.00\\
\addlinespace
\bottomrule
\end{tabular}
\end{table}
\subsection{Experiment 4: Sensitivity to threshold parameters}
This experiment examines the sensitivity of MLDiGoF to the decay parameter \(\varepsilon\) in \(t_n = n^{-\varepsilon}\), and of MLRDiGoF to the threshold \(\tau_n\). We fix the network size \(n = 800\), the number of layers \(L = 15\), the global sparsity parameter \(\rho = 0.2\), and the true asymmetric community structure \((K_s, K_r) = (3,5)\). For MLDiGoF, we vary \(\varepsilon\) from 0.1 to 1.00 in steps of 0.1. For each \(\varepsilon\), we generate 200 independent networks, run MLDiGoF with threshold \(t_n = n^{-\varepsilon}\), and record the accuracy. For MLRDiGoF, we consider two types of threshold sequences: constant thresholds \(\tau \in \{2, 4, 6, 8, 10, 12, 14, 16, 18, 20\}\) and growing thresholds \(\tau_n = a \log n\) with \(a \in \{0.5, 1.0, 1.5, 2.0, 2.5, 3.0, 3.5, 4, 4.5, 5\}\). For each threshold setting, we generate 200 independent networks and run MLRDiGoF, recording the accuracy.

The sensitivity analysis results are presented in Figure \ref{tab:threshold_sensitivity}. For MLDiGoF, it maintains high accuracy (\(\ge 0.9\)) for the decay parameter \(\varepsilon\) in the range \((0.05, 0.70]\). Its accuracy begins to decline for \(\varepsilon \geq0.70\). This pattern aligns precisely with Remark \ref{rem:conditions}, which requires \(0 < \varepsilon < 0.5\) for the threshold conditions (C1) and (C2) to hold. The default choice \(\varepsilon = 0.2\) yields almost perfect accuracy in this experiment. For MLRDiGoF with constant thresholds $\tau$, accuracy is poor for $\tau\leq6$ but improves as $\tau$ increases. With growing thresholds $\tau_n = a \log n$, accuracy is larger than 0.9 for $a\geq4$. The algorithm's default $\tau_n = 8 \log n$ (corresponding to $a = 8$) performs alsmost perfectly, as expected. These results confirm Theorem \ref{thm:RDiGoF-consistency}, which requires $\tau_n$ to exceed a certain constant (Condition D1) and grow slower than $\sqrt{n/\log n}$ (Condition D2). Both constant thresholds $\tau \ge 8$ and growing thresholds with $a \ge 8$ satisfy these conditions.
\begin{figure}[H]
\centering
{\includegraphics[width=0.66\textwidth]{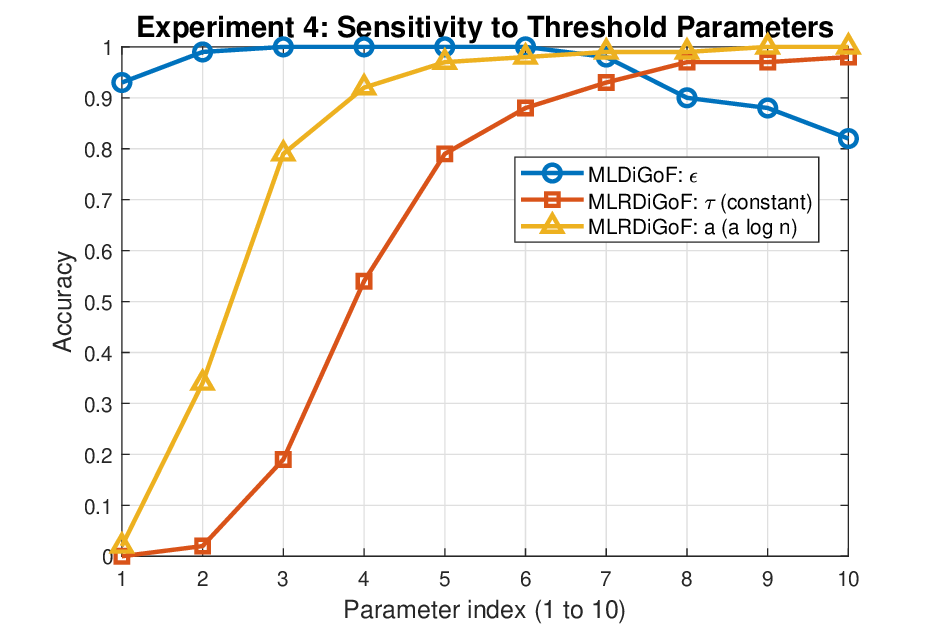}}
\caption{Sensitivity analysis of MLDiGoF and MLRDiGoF to threshold parameters ($n=800$, $L=15$, $\rho=0.2$, $(K_s,K_r)=(3,5)$). For each method, the $x$-axis index $1$ to $10$ corresponds to an increasing sequence of the associated threshold parameter: $\varepsilon$ for MLDiGoF (blue circles), constant $\tau$ for MLRDiGoF (red squares), and scale factor $a$ for MLRDiGoF with $\tau_n = a\log n$ (yellow triangles). Accuracy is computed over independent replications.}
\label{tab:threshold_sensitivity} 
\end{figure}
\subsection{Real data example}
We consider a multi-layer directed network built from the Food and Agriculture Organization (FAO) Multiplex Trade Network, available at \url{https://manliodedomenico.com/data.php}. In this dataset, nodes represent countries and each layer corresponds to a specific agricultural product. The data are for the year 2010 and record annual import and export values among countries. We select the 30 products with the largest total trade volume. For each selected product, we construct a layer by placing a directed edge from the exporter to the importer if the trade value between the two countries reaches or exceeds 100. The resulting network consists of 213 countries and 30 layers, covering a range of major food and agricultural commodities. This dataset provides a realistic multi-layer directed network for evaluating our community number estimation methods.

Figure~\ref{fig:FAO} displays the test statistics for the FAO network with \(K_{\mathrm{cand}}=10\), which is chosen to be larger than the default \(\lfloor\sqrt{n/\log n}\rfloor\approx 6\) (for \(n=213\)) to expand the search space.  The left panel of Figure~\ref{fig:FAO} shows that \(\hat{T}_n(m)\) remains above \(10\) for all \(m\), far exceeding the MLDiGoF threshold \(t_n=n^{-0.2}\approx 0.3422\). Consequently, MLDiGoF never encounters a candidate with \(\hat{T}_n(m)<t_n\) and returns the largest candidate pair \((K_{\mathrm{cand}},K_{\mathrm{cand}})=(10,10)\). This indicates that the FAO data lacks a sufficiently strong community signal to drive \(\hat{T}_n\) below the prescribed decay threshold.  The right panel plots the ratio statistic \(r_m\). Its global maximum is below \(1.8\), which is much smaller than the default threshold \(\tau_n=8\log n\) used in MLRDiGoF. Hence, with the default threshold, MLRDiGoF would also terminate at \((10,10)\). However, by detecting the peak of \(r_m\), one can select a smaller threshold to stop at the global maximum located at \(m=42\), corresponding to the candidate pair \((k_s,k_r)=(6,4)\) from Table \ref{tab:lex_order_K10}. This yields an estimated community structure of \((6,4)\) for the FAO network.  

The contrast between the two algorithms highlights a fundamental advantage of the ratio‑based approach. MLDiGoF relies on an absolute threshold \(t_n\); its stopping decision is sensitive to the precise finite‑sample value of \(\hat{T}_n\) and fails when the test statistic never drops below that threshold. In contrast, MLRDiGoF monitors the relative changes in consecutive test statistics. Even when all \(\hat{T}_n(m)\) are large, a sharp transition in the ratio sequence—appearing as a clear peak—can signal the move from underfitted to adequately specified models. This relative measure makes MLRDiGoF more robust in real‑world settings where clear community structure may be absent. By detecting this peak rather than an absolute level, MLRDiGoF provides a meaningful estimate even when the absolute goodness‑of‑fit measure remains uniformly high.
\begin{figure}[htbp]
\centering
\resizebox{\columnwidth}{!}{
{\includegraphics[width=3\textwidth]{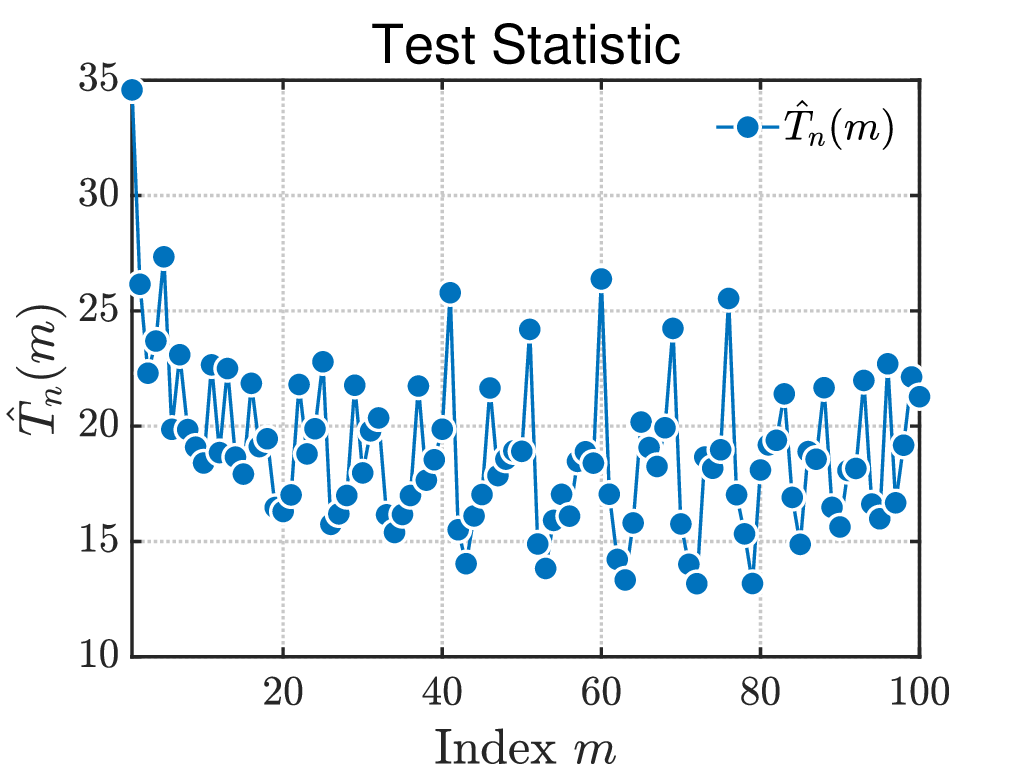}}
{\includegraphics[width=3\textwidth]{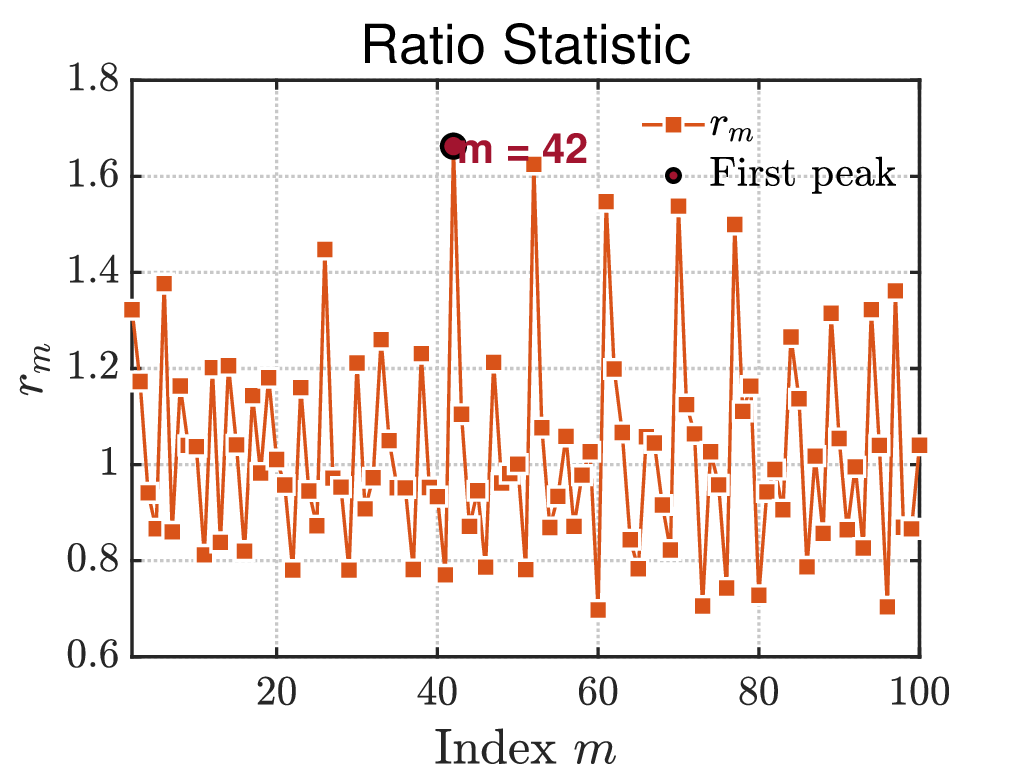}}
}
\caption{Test statistic \(\hat{T}_n(m)\) and ratio statistic \(r_m\) for ordered candidate pairs \(1 \leq m \leq 100\) (i.e., \(K_{\mathrm{cand}}=10\)) for the FAO network. The red circle in the right panel marks the global maximum of \(r_m\) at \(m=42\), corresponding to the candidate pair \((k_s,k_r)=(6,4)\).}
\label{fig:FAO}
\end{figure}
\section{Conclusion}\label{sec:conclusion}
This paper addresses the fundamental challenge of jointly estimating the number of sender and receiver communities in multi-layer directed networks under the multi-layer stochastic co-block model. By introducing a novel goodness-of-fit test based on the largest singular value of a normalized residual matrix, we establish a sharp dichotomy: under the null model, the test statistic’s upper bound converges to 0 with high probability, while itself diverges to infinity under underfitting. This theoretical insight enables the design of two efficient sequential testing algorithms MLDiGoF and its ratio-based variant MLRDiGoF that lexicographically search through candidate pairs of community numbers and terminate at the smallest adequate model. Both methods are proven to consistently recover the true asymmetric community counts. Extensive numerical experiments validate the efficacy and accuracy of the proposed methods, demonstrating their robustness across varying network sparsity levels and community structures. Real data applications further confirm the practical utility of the ratio-based approach in recovering meaningful asymmetric community structures in complex multi-layer directed networks.

The proposed framework can be extended in several meaningful directions. Promising model-based extensions include developing analogous goodness-of-fit tests for multi-layer degree-corrected ScBMs to account for degree heterogeneity \citep{karrer2011stochastic,ma2021determining}, for multi-layer bipartite networks \citep{zhou2019analysis} to consider different types of nodes, and for multi-layer mixed-membership ScBMs to allow for overlapping asymmetric communities \citep{MMSB,mao2021estimating,jin2024mixed,qing2025discovering}. Methodologically, relaxing conditions such as balanced community sizes or perfect label recovery would enhance robustness. Further adaptations could address networks with weighted edges or dynamic settings where community memberships evolve smoothly across layers. From a computational perspective, accelerating the methodology via efficient  randomized algorithms developed in \citep{guo2023randomized,deng2024subsampling}  for large-scale multi-layer directed networks remains an important challenge for theory and practice.
\section*{CRediT authorship contribution statement}
\textbf{Huan Qing} is the sole author of this article.
\section*{Declaration of competing interest}
The authors declare no competing interests.

\section*{Data availability}
Data will be made available on request.


\appendix
\section{Behaviors of the test statistics}\label{sec:proofs}
Throughout the appendix, we use the notation introduced in the main text. \(\|\cdot\|\) denotes the spectral norm, \(\|\cdot\|_F\) the Frobenius norm, 
and \(\sigma_1(\cdot)\) the largest singular value. For two sequences \(a_n,b_n\) we write \(a_n\ll b_n\) when \(a_n=o(b_n)\). The symbols \(C\) and \(c\) denote generic positive constants whose values may change from line to line.
\subsection{Proof of Lemma \ref{ideal0}}
\begin{proof}
By Assumption \ref{assump:a1}, we have
\[
\sum_{\ell=1}^L \Omega^{(\ell)}(i,j)(1 - \Omega^{(\ell)}(i,j)) \geq L \delta(1 - \delta),
\]
which gives
  \[
  |R(i,j)| \leq \frac{L}{\sqrt{(n-1) L \delta(1-\delta)}} = \sqrt{\frac{L}{(n-1)\delta(1-\delta)}} =: M_n.
  \]

The entries \(\{R(i,j): i \neq j\}\) are mutually independent (because edges are independent across layers and pairs). The diagonal entries are zero. The non-symmetric version of Corollary 3.12 in
 \citep{Afonso2016}) is helpful for our proof. We state it below:
\begin{lem}\label{Rec312}
(Rectangular version of Corollary 3.12 in \citep{Afonso2016}) Let \(X\) be an \(n_1 \times n_2\) random matrix with independent entries \(X_{ij}\) satisfying \(|X_{ij}| \leq \tilde{\sigma}_{*}\). Define
\[
\tilde{\sigma} = \max\left\{ \max_{i} \sqrt{\sum_{j} \mathbb{E}[X_{ij}^2]}, \max_{j} \sqrt{\sum_{i} \mathbb{E}[X_{ij}^2]} \right\}.
\]

Then there exists a universal constant \(C > 0\) such that for any \(0 < \eta \leq 1/2\) and \(t \geq 0\),
\[
\mathbb{P}\left( \|X\| \geq (1+\eta) 2\tilde{\sigma} + t \right) \leq (n_1\wedge n_2) \exp\left(-\frac{t^2}{C \tilde{\sigma}^2_{*}}\right).
\]
\end{lem}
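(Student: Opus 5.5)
The plan is to reduce the rectangular statement to the symmetric one in \citep{Afonso2016} through the Hermitian dilation
\[
Y=\begin{pmatrix}0 & X\\ X^{\top} & 0\end{pmatrix}\in\mathbb{R}^{(n_1+n_2)\times(n_1+n_2)}.
\]
Three properties of $Y$ do the work. First, its eigenvalues are $\pm\sigma_k(X)$ together with zeros, so $\|Y\|=\|X\|$. Second, the upper-triangular entries of $Y$ are exactly the entries $X_{ij}$, which are independent and centered (in our application, $R(i,j)$ is centered). They are also bounded by $\tilde\sigma_*$, and the diagonal of $Y$ vanishes. Third, the variance of row $i\le n_1$ of $Y$ is $\sum_j\mathbb{E}[X_{ij}^2]$, and the variance of row $n_1+j$ is $\sum_i\mathbb{E}[X_{ij}^2]$. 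Hence the parameter $\sigma=\max_i\sqrt{\sum_j\mathbb{E}Y_{ij}^2}$ of \citep{Afonso2016} equals $\tilde\sigma$, and $\sigma_*=\max|Y_{ij}|\le\tilde\sigma_*$. Applying Corollary 3.12 of \citep{Afonso2016} to $Y$ then gives, for $0<\eta\le 1/2$,
\[
\mathbb{P}\bigl(\|X\|\ge(1+\eta)2\tilde\sigma+t\bigr)\le (n_1+n_2)\,e^{-t^2/(C\tilde\sigma_*^2)} .
\]
Because $\eta$ is restricted to $(0,1/2]$, the $\eta$-dependent constant in their corollary can be replaced by a single universal $C$.

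It remains to improve the prefactor from $n_1+n_2$ to $n_1\wedge n_2$; I expect this to be the main obstacle. In \citep{Afonso2016} the dimensional prefactor enters only through the trace in the moment bound $\mathbb{E}\operatorname{tr}Y^{2p}\le n\,(\cdots)^{2p}$, and the tail bound follows from it by Talagrand concentration for the convex Lipschitz function $\|Y\|$. For the dilation, however, $\operatorname{tr}Y^{2p}=2\operatorname{tr}\bigl((XX^{\top})^p\bigr)=2\operatorname{tr}\bigl((X^{\top}X)^p\bigr)$. Writing this trace as a sum over closed paths, we may always start the paths on the smaller side of the bipartition. Every closed path in the bipartite graph underlying $Y$ visits both sides, so the comparison with the Gaussian/path-counting bound of their Theorem 1.1 goes through with the starting-vertex count $n_1\wedge n_2$ in place of $n_1+n_2$, up to the factor $2$.

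I will redo that one step of their proof, namely bounding $\mathbb{E}\operatorname{tr}(XX^{\top})^p$ (or $\mathbb{E}\operatorname{tr}(X^{\top}X)^p$, whichever corresponds to the smaller dimension) by $(n_1\wedge n_2)\bigl(2\tilde\sigma+C\tilde\sigma_*\sqrt{p}\bigr)^{2p}$. The remaining steps of their argument are unchanged: the choice $p\asymp\log(n_1\wedge n_2)/\eta$ gives the expectation bound, and the concentration step is dimension-free. The leftover factor $2$ is absorbed into the constant $C$ in the exponent. This absorption is valid in the nontrivial regime $t\gtrsim\tilde\sigma_*$; for smaller $t$ the right-hand side already exceeds $1$, so the bound holds trivially. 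If the path-counting refinement proves delicate, I would fall back on the $(n_1+n_2)$ version. For the application to Lemma~\ref{ideal0}, where $n_1=n_2=n$, that version costs only a harmless factor $2$.
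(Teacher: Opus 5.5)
The paper gives no proof of this lemma. It only cites it as a rectangular version of Corollary~3.12 of \citep{Afonso2016}. Your dilation reduction is sound: $\|Y\|=\|X\|$, the upper triangle of $Y$ consists of the $X_{ij}$ and zeros, and the row-variance parameter of $Y$ equals $\tilde\sigma$. The fallback prefactor $n_1+n_2$ would also be harmless in every application in the paper. You are right to use centering, which the printed statement omits; the statement is false without it (take $X_{ij}\equiv 1$).

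The genuine gap is your claim that, because $\eta\in(0,1/2]$, the $\eta$-dependent constant of \citep{Afonso2016} can be replaced by a universal $C$. That constant degenerates as $\eta\downarrow 0$: their expectation bound carries the term $\sigma_*\sqrt{\log n}/\sqrt{\log(1+\eta)}$, and $(0,1/2]$ is not bounded away from $0$. Your own sketch shows this. With $p\asymp\log(n_1\wedge n_2)/\eta$ you are left with a shift of order $\tilde\sigma_*\sqrt{\log(n_1\wedge n_2)/\eta}$, and absorbing it into $(n_1\wedge n_2)e^{-t^2/(C\tilde\sigma_*^2)}$ forces $C\gtrsim 1/\eta$. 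Moreover, the right-hand side does not involve $\eta$. A uniform $C$ would therefore give, letting $\eta\downarrow 0$, the bound $\mathbb{P}(\|X\|>2\tilde\sigma+t)\le (n_1\wedge n_2)e^{-t^2/(C\tilde\sigma_*^2)}$. That is an $\varepsilon=0$ estimate which neither the cited corollary nor your argument provides.

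What you can actually prove is the lemma with $C=C_\eta$. That suffices for Lemma~\ref{ideal0} and for Step~6 of the proof of Theorem~\ref{thm:power}, where $\eta$ is fixed. It does not suffice where the paper lets $\eta\to 0$, for instance when \eqref{eqp1} is applied with $x=t_n/3$ in the proof of Theorem~\ref{thm:consistency}. There the $\eta$-dependence must be carried along. With the $C_\eta\asymp 1/\eta$ that the argument yields, the oracle term then needs roughly $t_n\gg (L\log n/n)^{1/3}$, which is stronger than (C1) when $K_{\max}$ is bounded.

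Two smaller points:
\begin{enumerate}
\item Check the leading constant you import. For centered entries that are not symmetrically distributed (such as sums of centered Bernoullis), Corollary~3.12 of \citep{Afonso2016} is obtained by symmetrization and carries $(1+\varepsilon)2\sqrt{2}\,\tilde\sigma$. Getting $2$ needs the refinement discussed right after it. Redoing the moment step yourself meets the same obstacle, because the shape-by-shape comparison with a Gaussian matrix uses vanishing odd moments.
\item Absorbing the factor $2$ fails when $n_1\wedge n_2=1$. In that case the right-hand side $e^{-t^2/(C\tilde\sigma_*^2)}$ is below $1$ for every $t>0$, so it does not hold trivially for small $t$. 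This case needs a separate elementary argument, e.g.\ Bernstein's inequality applied to $\sum_j X_{1j}^2$.
\end{enumerate}
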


We apply Lemma \ref{Rec312} to the ideal residual matrix \(R\), which is \(n \times n\). Given that
  \[
  \sum_{j} \mathbb{E}[R(i,j)^2] = \sum_{j \neq i} \frac{1}{n-1} = 1, \sum_{i} \mathbb{E}[R(i,j)^2] = \sum_{i \neq j} \frac{1}{n-1} = 1,
  \]
we have \(\tilde{\sigma} = \max\{1, 1\} = 1\).

Choose \(\eta = \epsilon/4\) with \(\epsilon \leq 2\), we have
\[
(1+\eta)2\tilde{\sigma} = 2(1 + \epsilon/4) = 2 + \epsilon/2.
\]

Set \(t = \epsilon/2\). Then
\[
(1+\eta)2\tilde{\sigma} + t = 2 + \epsilon/2 + \epsilon/2 = 2 + \epsilon.
\]

The entries of $R$ are bounded by \(\tilde{\sigma}_{*} = M_n\). Applying Lemma \ref{Rec312} obtains
\[
\mathbb{P}\left( \|R\| \geq 2 + \epsilon \right) \leq n \exp\left(-\frac{t^2}{C M_n^2}\right) = n \exp\left(-\frac{\epsilon^2}{4C M_n^2}\right).
\]

Recall \(M_n^2 = \dfrac{L}{(n-1)\delta(1-\delta)}\). Let \(C' = \dfrac{4C}{\delta(1-\delta)}\). We have
\begin{align}\label{eqp1}
\mathbb{P}\left( \|R\| \geq 2 + \epsilon \right) \leq n \exp\left(-\frac{\epsilon^2 \delta(1-\delta)(n-1)}{4C L}\right) = n \exp\left(-\frac{\epsilon^2 (n-1)}{C' L}\right).
\end{align}

By Assumption \ref{assump:a3}, we have \( \frac{K_{\max}^2 L \log n}{n} \to 0 \). Since \( K_{\max} \geq 1 \), this implies \( \frac{L \log n}{n} \to 0 \), and consequently,
\[
\frac{n-1}{L \log n} \to \infty.
\]

Taking the logarithm of the right-hand side in Equation \eqref{eqp1}  yields:
\[
\log\left[ n \exp\left(-\frac{\epsilon^2 (n-1)}{C' L}\right) \right]
= \log n - \frac{\epsilon^2 (n-1)}{C' L}.
\]

We can rewrite this as:
\[
\log n - \frac{\epsilon^2 (n-1)}{C' L}
= \log n \left( 1 - \frac{\epsilon^2}{C'} \cdot \frac{n-1}{L \log n} \right).
\]

Since \( \frac{n-1}{L \log n} \to \infty \), the term inside the parentheses tends to \(-\infty\), and thus:
\[
\log n - \frac{\epsilon^2 (n-1)}{C' L} \to -\infty.
\]

Exponentiating, we obtain:
\[
n \exp\left(-\frac{\epsilon^2 (n-1)}{C' L}\right) \to 0.
\]

Therefore,
\[
\mathbb{P}\left( \|R\| \geq 2 + \epsilon \right) \to 0.
\]

Since \(\sigma_1(R) = \|R\|\), we have
\[
\mathbb{P}\left( \sigma_1(R) < 2 + \epsilon \right) \to 1.
\]

Therefore, we get
\[
\mathbb{P}(T_n < \epsilon) = \mathbb{P}(\sigma_1(R) - 2 < \epsilon) = \mathbb{P}(\sigma_1(R) < 2 + \epsilon) \to 1.
\]

This completes the proof of this lemma.
\end{proof}
\subsection{Properties of estimated parameters}
\begin{lem}[Concentration of the block probability estimator]\label{lem:concentration}
For any estimated sender community $s_0$ and receiver community $r_0$ with sizes $n_{s_0} = |\{i: \hat{g}^s(i) = s_0\}|$ and $n_{r_0} = |\{j: \hat{g}^r(j) = r_0\}|$, we have
\[
|\hat{B}^{(\ell)}(s_0, r_0) - \mathbb{E}[\hat{B}^{(\ell)}(s_0, r_0)]| \leq \sqrt{\frac{6 \log n}{n_{s_0} n_{r_0}}}
\]
with probability at least $1 - O(n^{-3})$, where $\mathbb{E}[\hat{B}^{(\ell)}(s_0, r_0)]$ is the expectation conditioned on $\hat{g}^s$ and $\hat{g}^r$.
\end{lem}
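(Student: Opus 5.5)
Conditional on the labels $\hat{g}^s$ and $\hat{g}^r$, the plan is to treat $\hat{B}^{(\ell)}(s_0,r_0)$ as an average of independent Bernoulli variables and apply Hoeffding's inequality. Write $N = n_{s_0} n_{r_0}$. Then $\hat{B}^{(\ell)}(s_0,r_0) = N^{-1}\sum_{i:\hat{g}^s(i)=s_0}\sum_{j:\hat{g}^r(j)=r_0} A^{(\ell)}(i,j)$. Under Definition \ref{def:ML-ScBM}, the entries $A^{(\ell)}(i,j)$ with $i\neq j$ are independent Bernoulli variables with means $\Omega^{(\ell)}(i,j)$, and the diagonal terms are deterministic zeros. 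So the sum has at most $N$ independent summands, each taking values in $[0,1]$.

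By Hoeffding's inequality for bounded independent summands, for any $t>0$,
\[
\mathbb{P}\Bigl(\bigl|\hat{B}^{(\ell)}(s_0,r_0)-\mathbb{E}[\hat{B}^{(\ell)}(s_0,r_0)]\bigr|\ge t \Bigm| \hat{g}^s,\hat{g}^r\Bigr)\le 2\exp\bigl(-2N t^2\bigr).
\]
The deterministic diagonal terms have zero variance, so they only help. Taking $t=\sqrt{6\log n/N}$ makes the right-hand side equal to $2\exp(-12\log n)=2n^{-12}$. This is certainly $O(n^{-3})$, and the slack is useful later for union bounds over blocks $(s_0,r_0)$ and layers $\ell$.

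The main subtlety is that $\hat{g}^s$ and $\hat{g}^r$ are computed from the same adjacency matrices, so conditioning on them does not, strictly speaking, leave the $A^{(\ell)}(i,j)$ independent Bernoulli variables. I would handle this in one of two ways.

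\begin{enumerate}
\item Interpret the statement as in its phrasing: the bound holds for any fixed partition, with the expectation taken as if the labels were fixed. Then it holds uniformly over all deterministic label assignments.
\item Make the result rigorous for data-dependent labels by combining it with Assumption \ref{assump:a4}. On the event $\{\hat{g}^s=g^s,\ \hat{g}^r=g^r\}$, which has probability tending to one, the estimated blocks coincide with deterministic blocks. On that event the fixed-partition bound applies directly.
\end{enumerate}

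A union bound over all possible partitions would be too costly, since there are exponentially many of them. The clean route is therefore to prove the bound for fixed labels and then transfer it to the estimated labels via the consistency event.

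I expect this conditioning/independence point to be the only real obstacle. The concentration step itself is a routine application of Hoeffding's inequality.
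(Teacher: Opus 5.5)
Your argument is correct if the labels are treated as fixed. It reaches the bound by a more elementary route than the paper.

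The paper applies Bernstein's inequality to $S=\sum_{i,j}\bigl(A^{(\ell)}(i,j)-p_{ij}\bigr)$ using only the crude variance bound $\sigma^2\le m/4$. This forces a case split:
\begin{itemize}
\item When $m<6\log n$, the claim is trivial because $|S|/m\le 1<\sqrt{6\log n/m}$.
\item When $m\ge 6\log n$, the paper checks that the Bernstein exponent is at least $3\log n$, which gives failure probability $2n^{-3}$.
\end{itemize}
With variance proxy $m/4$, Bernstein is never stronger than Hoeffding: $2\exp\bigl(-\tfrac{v^2/2}{m/4+v/3}\bigr)\ge 2\exp(-2v^2/m)$. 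So your direct Hoeffding step covers all block sizes at once. It also gives the sharper tail $2n^{-12}$, which leaves more room for the later union bounds over blocks and layers.

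You also handle the diagonal correctly. The paper sets $p_{ii}=B^{(\ell)}(g^s(i),g^r(i))$ and calls $X_{ii}$ centered, even though $A^{(\ell)}(i,i)=0$.

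On conditioning, the paper's proof is exactly your option 1. It asserts that the $X_{ij}$ are independent and centered given $(\hat g^s,\hat g^r)$, and it identifies the conditional mean with $m^{-1}\sum_{i,j}p_{ij}$. It never addresses that the labels are computed from the same $A^{(\ell)}$. Your point that this is justified only for data-independent labels is a fair criticism of the paper, not a gap in your plan.

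Your option 2, however, does not deliver the stated rate by itself. It gives failure probability at most $2n^{-12}+\mathbb{P}\bigl(\hat g^s\neq g^s \text{ or } \hat g^r\neq g^r\bigr)$. Assumption \ref{assump:a4} only says the second term tends to zero. You would therefore need an $O(n^{-3})$ rate for exact label recovery, which the paper simply asserts in the proof of Lemma \ref{lem:bound-Omega2}.

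Option 2 is also available only under $H_0$. The paper additionally invokes this lemma for underfitted candidates in Lemma \ref{lem:underfit-bounds-ratio}, where no consistency event exists.
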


\begin{proof}
Define the sets: \(\hat{C}_{s_0}^s = \{ i \in [n] : \hat{g}^s(i) = s_0 \}, \quad \hat{C}_{r_0}^r = \{ j \in [n] : \hat{g}^r(j) = r_0 \}.\) Set $m = n_{s_0} n_{r_0}$. If $m = 0$, then the estimated community is empty, and by definition $\hat{B}^{(\ell)}(s_0, r_0) = 0$ and $\mathbb{E}[\hat{B}^{(\ell)}(s_0, r_0) \mid \hat{g}^s, \hat{g}^r] = 0$, so the inequality holds trivially. We thus assume $m \ge 1$ in the following.

For each $i \in \hat{C}_{s_0}^s$ and $j \in \hat{C}_{r_0}^r$, $A^{(\ell)}(i,j)$ is a Bernoulli random variable with success probability
\[
p_{ij} = B^{(\ell)}(g^s(i), g^r(j)),
\]
where $g^s(i)$ and $g^r(j)$ are the true (unknown) sender and receiver community labels. 

Define the centered random variables \(X_{ij} = A^{(\ell)}(i,j) - p_{ij}, \quad \forall i \in \hat{C}_{s_0}^s, j \in \hat{C}_{r_0}^r.\) We see that $X_{ij}$ are independent and satisfy
\[
\mathbb{E}[X_{ij} \mid \hat{g}^s, \hat{g}^r] = 0, \quad 
|X_{ij}| \le 1, \quad 
\operatorname{Var}(X_{ij} \mid \hat{g}^s, \hat{g}^r) = p_{ij}(1-p_{ij}) \le \frac{1}{4}.
\]

Set \(S = \sum_{i \in \hat{C}_{s_0}^s} \sum_{j \in \hat{C}_{r_0}^r} X_{ij}\). We have
\[
\mathbb{E}[S \mid \hat{g}^s, \hat{g}^r] = 0, \quad 
\sigma^2 := \operatorname{Var}(S \mid \hat{g}^s, \hat{g}^r) = \sum_{i \in \hat{C}_{s_0}^s} \sum_{j \in \hat{C}_{r_0}^r} p_{ij}(1-p_{ij}) \le \frac{m}{4}.
\]

Note that the block probability estimator is defined as \(\hat{B}^{(\ell)}(s_0, r_0) = \frac{1}{m} \sum_{i \in \hat{C}_{s_0}^s} \sum_{j \in \hat{C}_{r_0}^r} A^{(\ell)}(i,j)\),
and its conditional expectation is \(\mathbb{E}[\hat{B}^{(\ell)}(s_0, r_0) \mid \hat{g}^s, \hat{g}^r] = \frac{1}{m} \sum_{i \in \hat{C}_{s_0}^s} \sum_{j \in \hat{C}_{r_0}^r} p_{ij}\). Thus, we have
\[
\hat{B}^{(\ell)}(s_0, r_0) - \mathbb{E}[\hat{B}^{(\ell)}(s_0, r_0) \mid \hat{g}^s, \hat{g}^r] = \frac{S}{m}.
\]

Our goal is to control the magnitude of $S/m$. We apply Bernstein's inequality in \citep{tropp2012user}. For any $v > 0$,
\begin{align}\label{eqp2}
\mathbb{P}( |S| \ge v \mid \hat{g}^s, \hat{g}^r ) \le 2 \exp\left( -\frac{v^2/2}{ \sigma^2 + v/3 } \right) \le 2 \exp\left( -\frac{v^2/2}{ m/4 + v/3 } \right). 
\end{align}

We choose $v = \sqrt{6 m \log n}$. Note that since $|X_{ij}| \le 1$, we have $|S| \le m$. Therefore, if $v > m$, the event $\{ |S| \ge v \}$ cannot occur. Now $v > m$ if and only if
\[
\sqrt{6 m \log n} > m \quad \Longleftrightarrow \quad 6 \log n > m \quad \Longleftrightarrow \quad m < 6 \log n.
\]

We distinguish two cases.

\noindent\textbf{Case 1: $m < 6 \log n$.} In this case, $v > m$, so $\mathbb{P}(|S| \ge v \mid \hat{g}^s, \hat{g}^r) = 0$. Moreover, we always have
\[
|\hat{B}^{(\ell)}(s_0, r_0) - \mathbb{E}[\hat{B}^{(\ell)}(s_0, r_0) \mid \hat{g}^s, \hat{g}^r]| = \frac{|S|}{m} \le 1.
\]

Since $m < 6 \log n$, we have
\[
\sqrt{\frac{6 \log n}{m}} > 1.
\]

Consequently,
\[
|\hat{B}^{(\ell)}(s_0, r_0) - \mathbb{E}[\hat{B}^{(\ell)}(s_0, r_0) \mid \hat{g}^s, \hat{g}^r]| \le 1 < \sqrt{\frac{6 \log n}{m}}
\]
holds with probability 1. In particular, the failure probability is 0, which certainly satisfies the $O(n^{-3})$ requirement.

\noindent\textbf{Case 2: $m \ge 6 \log n$.} Now $v \le m$, so we can apply inequality \eqref{eqp2}. Substituting $v = \sqrt{6 m \log n}$ into inequality \eqref{eqp2}, we examine the exponent:
\begin{align*}
\frac{v^2/2}{m/4 + v/3} &= \frac{3 m \log n}{ \frac{m}{4} + \frac{\sqrt{6 m \log n}}{3} }.
\end{align*}

We need to show that this quantity is at least $3 \log n$. This is equivalent to
\[
3 m \log n \ge 3 \log n \left( \frac{m}{4} + \frac{\sqrt{6 m \log n}}{3} \right) 
\quad \Longleftrightarrow \quad 
m \ge \frac{m}{4} + \frac{\sqrt{6 m \log n}}{3} 
\quad \Longleftrightarrow \quad 
\frac{3}{4} m \ge \frac{\sqrt{6 m \log n}}{3}.
\]

Multiplying both sides by 3 gives
\[
\frac{9}{4} m \ge \sqrt{6 m \log n}.
\]

Squaring both sides gives
\[
\frac{81}{16} m^2 \ge 6 m \log n \quad \Longleftrightarrow \quad \frac{81}{16} m \ge 6 \log n \quad \Longleftrightarrow \quad m \ge \frac{96}{81} \log n = \frac{32}{27} \log n.
\]

Since we are in the case $m \ge 6 \log n$ and $6 > 32/27 \approx 1.185$, the condition is satisfied. Therefore, we have
\[
\frac{3 m \log n}{ \frac{m}{4} + \frac{\sqrt{6 m \log n}}{3} } \ge 3 \log n,
\]
and inequality \eqref{eqp2} yields
\[
\mathbb{P}(|S| \ge v \mid \hat{g}^s, \hat{g}^r) \le 2 \exp(-3 \log n) = 2 n^{-3}.
\]

Hence, in Case 2, with conditional probability at least $1 - 2 n^{-3}$, we have $|S| < v$.

We have shown that the conditional probability $\mathbb{P}(|S| \ge v \mid \hat{g}^s, \hat{g}^r)$ is either 0 (when $m < 6 \log n$) or at most $2 n^{-3}$ (when $m \ge 6 \log n$). Thus, we have
\[
\mathbb{P}(|S| \ge v \mid \hat{g}^s, \hat{g}^r) \le 2 n^{-3}.
\]

By the law of total expectation, the unconditional probability satisfies
\[
\mathbb{P}(|S| \ge v) = \mathbb{E} \left[ \mathbb{P}(|S| \ge v \mid \hat{g}^s, \hat{g}^r) \right] \le 2 n^{-3}.
\]

Whenever $|S| < v$, we obtain
\[
|\hat{B}^{(\ell)}(s_0, r_0) - \mathbb{E}[\hat{B}^{(\ell)}(s_0, r_0) \mid \hat{g}^s, \hat{g}^r]| = \frac{|S|}{m} < \frac{v}{m} = \sqrt{\frac{6 \log n}{m}} = \sqrt{\frac{6 \log n}{n_{s_0} n_{r_0}}}.
\]

Therefore, with probability at least $1 - 2 n^{-3}$, the desired inequality holds. This completes the proof of this lemma.
\end{proof}

\begin{lem}[Uniform boundedness of the estimated edge probabilities]\label{lem:bound-Omega2}
Under $H_0$ and Assumptions \ref{assump:a1}-\ref{assump:a4}, we have
\[
\hat{\Omega}^{(\ell)}(i,j) \in [\delta/2, 1-\delta/2] \quad \text{for all } i,j,\ell
\]
with probability at least $1 - o(n^{-2})$.
\end{lem}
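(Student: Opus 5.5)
We work on the event $\mathcal{E}_0=\{\hat g^s=g^s,\ \hat g^r=g^r\}$, up to a relabeling of the communities. Under $H_0$ and Assumption \ref{assump:a4}, $\mathbb{P}(\mathcal{E}_0)\to1$. On $\mathcal{E}_0$ every estimated block is a true block. Hence for $i\in\hat C^s_{s_0}$ and $j\in\hat C^r_{r_0}$ we have $p_{ij}=B^{(\ell)}(s_0,r_0)$, except on the diagonal pairs $i=j$, where $A^{(\ell)}(i,i)=0$. The conditional expectation appearing in Lemma \ref{lem:concentration} is therefore
\[
\mathbb{E}[\hat B^{(\ell)}(s_0,r_0)\mid \hat g^s,\hat g^r]=B^{(\ell)}(s_0,r_0)\Bigl(1-\frac{|\hat C^s_{s_0}\cap\hat C^r_{r_0}|}{n_{s_0}n_{r_0}}\Bigr).
\]
The correction term is at most $\min(n_{s_0},n_{r_0})/(n_{s_0}n_{r_0})\le 1/\max(n_{s_0},n_{r_0})$. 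By Assumption \ref{assump:a2} this is $O(K_{\max}/n)=o(1)$. So, for $n$ large, the expectation lies in $[\delta-o(1),\,1-\delta]\subset[3\delta/4,\,1-3\delta/4]$ by Assumption \ref{assump:a1}. Strictly, conditioning on $\hat g$ while restricting to $\mathcal{E}_0$ is delicate, since $\hat g$ depends on $A$. The cleaner route is to apply the concentration bound directly to the fixed true partition, which is deterministic, and then intersect with $\mathcal{E}_0$.

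Next we control the deviation uniformly. By Assumption \ref{assump:a2}, $n_{s_0}n_{r_0}\ge c_0^2n^2/(K_sK_r)\ge c_0^2n^2/K_{\max}^2$. Lemma \ref{lem:concentration} then gives
\[
|\hat B^{(\ell)}(s_0,r_0)-\mathbb{E}\hat B^{(\ell)}(s_0,r_0)|\le \frac{K_{\max}}{c_0 n}\sqrt{6\log n}
\]
with probability at least $1-2n^{-3}$ for each triple $(s_0,r_0,\ell)$. By Assumption \ref{assump:a3}, $K_{\max}\sqrt{\log n}/n\to0$, so this bound is below $\delta/4$ for large $n$. Combining with the previous step gives $\hat B^{(\ell)}(s_0,r_0)\in[\delta/2,1-\delta/2]$. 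Since $\hat\Omega^{(\ell)}(i,j)=\hat B^{(\ell)}(\hat g^s(i),\hat g^r(j))$ for $i\ne j$, the claim follows for all off-diagonal entries. The diagonal entries are set to $0$ by definition, and they never enter $\hat R$, so the statement is read for $i\neq j$.

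The union bound over the $K_sK_rL$ triples costs $2K_{\max}^2Ln^{-3}$. By Assumption \ref{assump:a3}, $K_{\max}^2L\le n/\log n$, so this is $O(n^{-2}/\log n)=o(n^{-2})$. This is where Assumption \ref{assump:a3} is used crucially. It is also where a sharper exponent could be obtained by enlarging the constant $6$ in Lemma \ref{lem:concentration}.

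The main obstacle is the probability accounting for the label event. Assumption \ref{assump:a4} only gives $\mathbb{P}(\mathcal{E}_0)\to1$, with no rate, so the overall failure probability is only $o(n^{-2})+\mathbb{P}(\mathcal{E}_0^c)$. I would handle this in one of two ways. The first is to state the bound conditionally on, or intersected with, $\mathcal{E}_0$, noting that on $\mathcal{E}_0$ the estimated partition equals the deterministic true one; then no conditioning-on-data issue arises and the $1-o(n^{-2})$ bound holds for the oracle-partition estimator. The second is to interpret the lemma as holding with probability at least $1-o(n^{-2})-\mathbb{P}(\mathcal{E}_0^c)\to1$, which is all that the downstream use in Theorem \ref{thm:null} requires.
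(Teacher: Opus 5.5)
Your argument follows the same route as the paper's proof. You restrict to the exact-recovery event and use Assumption~\ref{assump:a2} to get $n_{s_0}n_{r_0}\ge c_0^2n^2/K_{\max}^2$. You apply the Bernstein bound of Lemma~\ref{lem:concentration}, use Assumption~\ref{assump:a3} to push the deviation below a fixed fraction of $\delta$, and union-bound over the $K_sK_rL\le K_{\max}^2L$ triples to get $2K_{\max}^2Ln^{-3}=o(n^{-2})$.

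Where you deviate, you are more careful than the paper. The paper claims that on $\mathcal{E}_n$ the conditional mean of $\hat B^{(\ell)}(k,l)$ equals the true block probability. This ignores the pairs $i=j$ inside a block, where $A^{(\ell)}(i,i)=0$; your $O(K_{\max}/n)$ correction with the $\delta/4$ margin handles it. The paper also bounds $\mathbb{P}(\mathcal{F}_n^c\mid\mathcal{E}_n)$ by the Bernstein tail. But conditioning on the data-dependent labels $\hat g^s,\hat g^r$ (or on $\mathcal{E}_n$) removes the independence that tail bound needs. Your device of concentrating the block averages over the fixed true partition and then intersecting with $\mathcal{E}_0$ is the right way to make that step rigorous. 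Your remark that the claim can only hold for $i\neq j$, since $\hat\Omega^{(\ell)}(i,i)=0$, is also correct.

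The only place you stop short of the literal statement is the rate for $\mathbb{P}(\mathcal{E}_0^c)$, and your diagnosis there is accurate. The paper closes this gap by asserting $\mathbb{P}(\mathcal{E}_n^c)=O(n^{-3})$, citing ``exponential convergence rates typical for spectral clustering.'' That is an additional hypothesis; Assumption~\ref{assump:a4} only gives $\mathbb{P}(\mathcal{E}_n)\to1$ and does not imply it. If you adopt any rate $\mathbb{P}(\mathcal{E}_0^c)=o(n^{-2})$ as a hypothesis, your argument gives the lemma exactly as stated. Without it, the stated assumptions support only your second formulation, with failure probability $o(n^{-2})+\mathbb{P}(\mathcal{E}_0^c)\to0$. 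That is enough for Lemma~\ref{lem:norm-error} and Theorem~\ref{thm:null}, whose conclusions are only in probability.
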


\begin{proof}
Let $\mathcal{E}_n$ denote the event that the community detection algorithm $\mathcal{M}$ recovers the true communities up to label permutation:
\[
\mathcal{E}_n := \{\hat{g}^s = g^s \text{ and } \hat{g}^r = g^r\},
\]
where equality is understood up to a permutation of community labels. By Assumption \ref{assump:a4}, we have
\[
\lim_{n\to\infty} \mathbb{P}(\mathcal{E}_n) = 1. 
\]

For each estimated sender community $k \in \{1,\dots,K_{s0}\}$ and receiver community $l \in \{1,\dots,K_{r0}\}$, define
\[
n_k^s := |\{i: \hat{g}^s(i) = k\}|, \quad n_l^r := |\{j: \hat{g}^r(j) = l\}|.
\]

Note that under $H_0$, $K_{s0} = K_s$ and $K_{r0} = K_r$. On the event $\mathcal{E}_n$, each estimated community corresponds exactly to one true community (after appropriate label permutation). Therefore, for any estimated sender community $k$, there exists a true sender community $k'$ such that
\[
\{i: \hat{g}^s(i) = k\} = \{i: g^s(i) = k'\}.
\]

By Assumption \ref{assump:a2}, we have
\[
n_k^s = |\{i: g^s(i) = k'\}| \ge c_0 \frac{n}{K_s}. 
\]

Similarly, for any estimated receiver community $l$, there exists a true receiver community $l'$ such that
\[
n_l^r = |\{j: g^r(j) = l'\}| \ge c_0 \frac{n}{K_r}. 
\]

Consequently, the product satisfies
\[
n_k^s n_l^r \ge \left(c_0 \frac{n}{K_s}\right)\left(c_0 \frac{n}{K_r}\right) = c_0^2 \frac{n^2}{K_s K_r} \ge c_0^2 \frac{n^2}{K_{\max}^2}.
\]

Fix a layer $\ell \in \{1,\dots,L\}$ and an estimated block $(k,l) \in [K_s] \times [K_r]$. By Lemma \ref{lem:concentration}, we have
\[
\left|\hat{B}^{(\ell)}(k,l) - \mathbb{E}\left[\hat{B}^{(\ell)}(k,l) \mid \hat{g}^s,\hat{g}^r\right]\right| \leq \sqrt{\frac{6\log n}{n_k^s n_l^r}}
\]
with probability at least $1 - O(n^{-3})$ given $\hat{g}^s$ and $\hat{g}^r$.

On $\mathcal{E}_n$, the conditional expectation equals the true block probability. More precisely, there exist permutations $\pi_s: [K_s] \to [K_s]$ and $\pi_r: [K_r] \to [K_r]$ such that for all $i,j$,
\[
\hat{g}^s(i) = \pi_s(g^s(i)), \quad \hat{g}^r(j) = \pi_r(g^r(j)).
\]

Then for any estimated block $(k,l)$, we have
\[
\mathbb{E}\left[\hat{B}^{(\ell)}(k,l) \mid \hat{g}^s,\hat{g}^r\right] = B^{(\ell)}(\pi_s^{-1}(k), \pi_r^{-1}(l)).
\]

By Assumption \ref{assump:a1}, we have
\[
B^{(\ell)}(\pi_s^{-1}(k), \pi_r^{-1}(l)) \in [\delta, 1-\delta].
\]

Thus, on $\mathcal{E}_n$, we have
\begin{align}\label{eqp3}
\mathbb{E}\left[\hat{B}^{(\ell)}(k,l) \mid \hat{g}^s,\hat{g}^r\right] \in [\delta, 1-\delta] \quad \text{for all } k,l,\ell. 
\end{align}
and
\[
\sqrt{\frac{6\log n}{n_k^s n_l^r}} \leq \sqrt{\frac{6\log n}{c_0^2 n^2 / K_{\max}^2}} = \frac{K_{\max}\sqrt{6\log n}}{c_0 n}. 
\]

By Assumption \ref{assump:a3} and the fact that $L \ge 1$, we have
\[
\frac{K_{\max}^2 \log n}{n} \to 0, \quad \text{hence} \quad \frac{K_{\max}\sqrt{\log n}}{n} \to 0. 
\]

Therefore, there exists $N_1 \in \mathbb{N}$ such that for all $n > N_1$,
\begin{align}\label{eqp4}
\frac{K_{\max}\sqrt{6\log n}}{c_0 n} \leq \frac{\delta}{2}.
\end{align}

Define the following event
\[
\mathcal{F}_n := \left\{ \left|\hat{B}^{(\ell)}(k,l) - \mathbb{E}\left[\hat{B}^{(\ell)}(k,l) \mid \hat{g}^s,\hat{g}^r\right]\right| \leq \frac{K_{\max}\sqrt{6\log n}}{c_0 n} \text{ for all } k,l,\ell \right\}.
\]

Using the union bound over all $K_s K_r L \leq K_{\max}^2 L$ blocks, and applying Lemma \ref{lem:concentration}, we have 
\[
\mathbb{P}\left( \mathcal{F}_n^c \mid \mathcal{E}_n \right) \leq K_{\max}^2 L \cdot 2n^{-3} = 2K_{\max}^2 L n^{-3}. 
\]

Now, for $n > N_1$, on the event $\mathcal{E}_n \cap \mathcal{F}_n$, combining Equations \eqref{eqp3} and \eqref{eqp4} gives for all $k,l,\ell$,
\begin{align}\label{eqp5}
\hat{B}^{(\ell)}(k,l) \in \left[\delta - \frac{\delta}{2}, 1-\delta + \frac{\delta}{2}\right] = [\delta/2, 1-\delta/2]. 
\end{align}

Recall that $\hat{\Omega}^{(\ell)}(i,j) = \hat{B}^{(\ell)}(\hat{g}^s(i), \hat{g}^r(j))$. Define the target event
\[
\mathcal{A}_n := \left\{ \hat{\Omega}^{(\ell)}(i,j) \in [\delta/2, 1-\delta/2] \text{ for all } i,j,\ell \right\}.
\]

From Equation \eqref{eqp5}, we have $\mathcal{E}_n \cap \mathcal{F}_n \subseteq \mathcal{A}_n$, hence $\mathcal{A}_n^c \subseteq \mathcal{E}_n^c \cup \mathcal{F}_n^c$. We now bound $\mathbb{P}(\mathcal{A}_n^c)$:
\[
\mathbb{P}(\mathcal{A}_n^c) \leq \mathbb{P}(\mathcal{E}_n^c) + \mathbb{P}(\mathcal{F}_n^c).
\]

For the first term, by Assumption \ref{assump:a4} and the exponential convergence rates typical for spectral clustering under our assumptions (see e.g., \citep{lei2016goodness}), we have $\mathbb{P}(\mathcal{E}_n^c) = O(n^{-3})$. For the second term, using the law of total probability obtains
\begin{align*}
\mathbb{P}(\mathcal{F}_n^c) &= \mathbb{P}(\mathcal{F}_n^c \cap \mathcal{E}_n) + \mathbb{P}(\mathcal{F}_n^c \cap \mathcal{E}_n^c)\leq \mathbb{P}(\mathcal{F}_n^c \mid \mathcal{E}_n) + \mathbb{P}(\mathcal{E}_n^c)\leq 2K_{\max}^2 L n^{-3} + O(n^{-3}).
\end{align*}

By Assumption \ref{assump:a3}, $K_{\max}^2 L = o(n/\log n)$. Therefore,
\[
2K_{\max}^2 L n^{-3} = o\left(\frac{n}{\log n} \cdot n^{-3}\right) = o\left(\frac{1}{n^2 \log n}\right) = o(n^{-2}).
\]

Thus, we have
\[
\mathbb{P}(\mathcal{A}_n^c) = O(n^{-3}) + o(n^{-2}) = o(n^{-2}).
\]

Consequently,
\[
\mathbb{P}(\mathcal{A}_n) = 1 - o(n^{-2}),
\]
which completes the proof.
\end{proof}

\begin{lem}[Convergence of the estimated residual matrix]\label{lem:norm-error}
Under \(H_0\) and Assumptions \ref{assump:a1}-\ref{assump:a4}, we have \(\|\hat{R} - R\| = o_P(1)\).
\end{lem}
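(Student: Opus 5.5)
On the event $\mathcal{E}_n\cap\mathcal{A}_n$ I would decompose $\hat R-R$ entrywise and bound its Frobenius norm; since Frobenius dominates spectral norm, this suffices. By Assumption \ref{assump:a4}, $\mathcal{E}_n=\{\hat g^s=g^s,\hat g^r=g^r\}$ (up to permutation) has probability tending to one. Lemma \ref{lem:bound-Omega2} gives $\mathcal{A}_n$ with probability $1-o(n^{-2})$. So it suffices to show $\|\hat R-R\|\mathbbm{1}_{\mathcal{E}_n\cap\mathcal{F}_n\cap\mathcal{A}_n}=o_P(1)$.

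On $\mathcal{E}_n$ we have $\hat\Omega^{(\ell)}(i,j)=\hat B^{(\ell)}(g^s(i),g^r(j))$. Write $\Delta^{(\ell)}(i,j)=\hat\Omega^{(\ell)}(i,j)-\Omega^{(\ell)}(i,j)$, and let $V_{ij}=\sum_\ell\Omega^{(\ell)}(1-\Omega^{(\ell)})$ and $\hat V_{ij}$ its plug-in. On $\mathcal{F}_n$, $|\Delta^{(\ell)}(i,j)|\le \varepsilon_n:=K_{\max}\sqrt{6\log n}/(c_0n)$ uniformly. For $i\neq j$, put $S_{ij}=\sum_\ell(A^{(\ell)}(i,j)-\Omega^{(\ell)}(i,j))$. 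Then
\[
\hat R(i,j)-R(i,j)=-\frac{\sum_\ell\Delta^{(\ell)}(i,j)}{\sqrt{(n-1)\hat V_{ij}}}+\frac{S_{ij}}{\sqrt{n-1}}\Bigl(\hat V_{ij}^{-1/2}-V_{ij}^{-1/2}\Bigr)=:E_1(i,j)+E_2(i,j).
\]

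For $E_1$, $\hat V_{ij}\ge L(\delta/2)(1-\delta/2)$ on $\mathcal{A}_n$, so $|E_1(i,j)|\le C L\varepsilon_n/\sqrt{nL}$. Hence $\|E_1\|_F\le n\cdot C\sqrt{L}\,K_{\max}\sqrt{\log n}/n^{3/2}=C\sqrt{K_{\max}^2L\log n/n}\to0$ by Assumption \ref{assump:a3}. In fact $E_1$ is blockwise constant with rank at most $K_sK_r$, but the Frobenius bound already suffices.

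For $E_2$, $|\hat V_{ij}-V_{ij}|\le 3L\varepsilon_n$, and both variances are of order $L$, so $|\hat V_{ij}^{-1/2}-V_{ij}^{-1/2}|\le C\varepsilon_n L^{-1/2}$. Thus $|E_2(i,j)|\le C\varepsilon_n|S_{ij}|/\sqrt{nL}$. Since $\mathbb{E}S_{ij}^2\le L/4$, we get $\mathbb{E}\sum_{i\ne j}S_{ij}^2\le n^2L/4$, and by Markov $\sum S_{ij}^2=O_P(n^2L)$. Therefore $\|E_2\|_F=O_P(\varepsilon_n\, n\sqrt{L}/\sqrt{nL})=O_P(K_{\max}\sqrt{\log n/n})=o_P(1)$.

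The main obstacle is that on $\mathcal{E}_n$ the labels $\hat g$ are data-dependent. The bound on $S_{ij}$ must therefore be unconditional: I take the expectation of $\sum S_{ij}^2$ without conditioning, which is legitimate since the bound above uses only the deterministic factor $\varepsilon_n$. Likewise, $\mathcal{F}_n$ is controlled through $\mathbb{P}(\mathcal{F}_n^c)\le\mathbb{P}(\mathcal{F}_n^c\mid\mathcal{E}_n)+\mathbb{P}(\mathcal{E}_n^c)$, exactly as in the proof of Lemma \ref{lem:bound-Omega2}. If one is uneasy about conditioning on $\mathcal{E}_n$, there is a cleaner alternative. Define the oracle plug-in $\tilde B^{(\ell)}$ computed from the true labels. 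It coincides with $\hat B^{(\ell)}$ on $\mathcal{E}_n$, and Lemma \ref{lem:concentration} applies to it with deterministic partitions. Combining the two terms gives $\|\hat R-R\|=O_P(\alpha_n)=o_P(1)$.
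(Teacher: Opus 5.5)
Your proposal is correct and takes essentially the same route as the paper. Both arguments split $\hat R-R$ entrywise into the mean-estimation term $-E_{ij}/\hat D_{ij}$ and the normalization term $S_{ij}(\hat D_{ij}^{-1}-D_{ij}^{-1})$, bound each through the uniform $O(K_{\max}\sqrt{\log n}/n)$ error of $\hat\Omega^{(\ell)}$ on the good event, and finish with $\|\cdot\|\le\|\cdot\|_F$ and Assumption \ref{assump:a3}. The differences are refinements rather than a new route: you control $\sum_{i\ne j}S_{ij}^2$ by its second moment and Markov instead of the paper's crude bound $|S_{ij}|\le L$, which sharpens that term by a factor $\sqrt{L}$ without changing the overall rate $O_P(\alpha_n)$. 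Your oracle plug-in $\tilde B^{(\ell)}$, built from the true labels, is also a more rigorous way to invoke Lemma \ref{lem:concentration} than the paper's conditioning on $\hat g^s,\hat g^r$, because those labels are functions of the $A^{(\ell)}$ and conditioning on them does not preserve independence of the entries.
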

\begin{proof}
Under \(H_0\), by Assumption \ref{assump:a4}, with probability tending to 1, \(\hat{g}^s = g^s\) and \(\hat{g}^r = g^r\). Condition on this event. Then \(\hat{\Omega}^{(\ell)}(i,j) = \hat{B}^{(\ell)}(g^s(i), g^r(j))\). By Lemma \ref{lem:concentration} and Assumptions \ref{assump:a1}-\ref{assump:a2}, we have
\[
|\hat{\Omega}^{(\ell)}(i,j) - \Omega^{(\ell)}(i,j)| = O_P\left( \frac{K_{\max} \sqrt{\log n}}{n} \right).
\]

Define
\begin{align*}
&S_{ij} = \sum_{\ell=1}^L (A^{(\ell)}(i,j) - \Omega^{(\ell)}(i,j)), ~~E_{ij} = \sum_{\ell=1}^L (\hat{\Omega}^{(\ell)}(i,j) - \Omega^{(\ell)}(i,j)), ~~U_{ij} = \sum_{\ell=1}^L
      \Omega^{(\ell)}(i,j)(1-\Omega^{(\ell)}(i,j)),\\
&\hat{U}_{ij} = \sum_{\ell=1}^L \hat{\Omega}^{(\ell)}(i,j)(1-\hat{\Omega}^{(\ell)}(i,j)),~~
      D_{ij} = \sqrt{(n-1) U_{ij}}, ~~
      \hat{D}_{ij} = \sqrt{(n-1) \hat{U}_{ij}}.
\end{align*}

Then, we have
\[
R(i,j) = \frac{S_{ij}}{D_{ij}}, \quad \hat{R}(i,j) = \frac{S_{ij} - E_{ij}}{\hat{D}_{ij}}.
\]

By Assumption \ref{assump:a1}, \(U_{ij} \geq L \delta(1-\delta)\), so \(D_{ij} \geq \sqrt{(n-1) L \delta(1-\delta)} = O(\sqrt{n L})\). By Lemma \ref{lem:bound-Omega2},  with probability \(1 - o(n^{-2})\), \(\hat{U}_{ij} \geq L\frac{\delta}{2}(1-\frac{\delta}{2})\), so \(\hat{D}_{ij} = O(\sqrt{n L})\). Now, we decompose \(\hat{R}(i,j) - R(i,j)\) as:
\[
\hat{R}(i,j) - R(i,j) = \frac{S_{ij} - E_{ij}}{\hat{D}_{ij}} - \frac{S_{ij}}{D_{ij}} = S_{ij} \left( \frac{1}{\hat{D}_{ij}} - \frac{1}{D_{ij}} \right) - \frac{E_{ij}}{\hat{D}_{ij}}.
\]

First, we bound \(|E_{ij}|\):
\[
|E_{ij}| = \left| \sum_{\ell=1}^L (\hat{\Omega}^{(\ell)}(i,j) - \Omega^{(\ell)}(i,j)) \right| \leq L \max_{\ell} |\hat{\Omega}^{(\ell)}(i,j) - \Omega^{(\ell)}(i,j)| = O_P\left( \frac{K_{\max}L \sqrt{\log n}}{n} \right).
\]

Second, we bound \(\left| \frac{1}{\hat{D}_{ij}} - \frac{1}{D_{ij}} \right|\). By Lemma \ref{lem:bound-Omega2}, there exist constants \(c_1, c_2 > 0\) such that \(\hat{D}_{ij} \geq c_1 \sqrt{n L}\) and \(D_{ij} \geq c_2 \sqrt{n L}\). Let \(c = \min(c_1, c_2)\). Then \(\hat{D}_{ij} \geq c \sqrt{n L}\) and \(D_{ij} \geq c \sqrt{n L}\). Moreover, note that \(U_{ij} \leq L/4\) and \(\hat{U}_{ij} \leq L/4\), so \(D_{ij} \leq \frac{1}{2}\sqrt{(n-1)L}\) and \(\hat{D}_{ij} \leq \frac{1}{2}\sqrt{(n-1)L}\). Hence, there exists a constant \(C_0 > 0\) such that for all large \(n\), we have \(D_{ij} \hat{D}_{ij} \geq C_0 D_{ij}^2\). Consequently,
\[
\left| \frac{1}{\hat{D}_{ij}} - \frac{1}{D_{ij}} \right| = \frac{|D_{ij} - \hat{D}_{ij}|}{D_{ij} \hat{D}_{ij}} \leq \frac{1}{C_0} \frac{|D_{ij} - \hat{D}_{ij}|}{D_{ij}^2}.
\]

Now, we bound \(|D_{ij} - \hat{D}_{ij}|\):
\[
|D_{ij} - \hat{D}_{ij}| = \left| \sqrt{(n-1) U_{ij}} - \sqrt{(n-1) \hat{U}_{ij}} \right| = \sqrt{n-1} \left| \sqrt{U_{ij}} - \sqrt{\hat{U}_{ij}} \right|.
\]

The function \(f(x) = \sqrt{x}\) is continuously differentiable on \((0, \infty)\). For any \(x, y \geq a > 0\), by the mean value theorem, there exists \(\xi\) between \(x\) and \(y\) such that
\[
|\sqrt{x} - \sqrt{y}| = \frac{1}{2\sqrt{\xi}} |x - y| \leq \frac{1}{2\sqrt{a}} |x - y|.
\]
Thus, \(f(x)\) is Lipschitz on \([a, \infty)\) with constant \(1/(2\sqrt{a})\).

By Assumption \ref{assump:a1}, we have \(\Omega^{(\ell)}(i,j) \in [\delta, 1-\delta]\), so
\[
U_{ij} = \sum_{\ell=1}^L \Omega^{(\ell)}(i,j)(1 - \Omega^{(\ell)}(i,j)) \geq L \delta(1-\delta).
\]
By Lemma \ref{lem:bound-Omega2}, with probability \(1 - o(n^{-2})\), we have \(\hat{U}_{ij} \geq L \frac{\delta}{2}(1 - \frac{\delta}{2})\). Since \(\delta \in (0, \frac{1}{2})\), we have \(L \frac{\delta}{2}(1 - \frac{\delta}{2}) \leq L \delta(1-\delta)\). Let
\[
a = \min\left\{ L \delta(1-\delta), L \frac{\delta}{2}\left(1 - \frac{\delta}{2}\right) \right\} = L \frac{\delta}{2}\left(1 - \frac{\delta}{2}\right).
\]
Then both \(U_{ij}\) and \(\hat{U}_{ij}\) are at least \(a\) with high probability. Therefore, on this event,
\[
\left| \sqrt{U_{ij}} - \sqrt{\hat{U}_{ij}} \right| \leq \frac{1}{2\sqrt{a}} |U_{ij} - \hat{U}_{ij}| = \frac{1}{2\sqrt{L \frac{\delta}{2}(1 - \frac{\delta}{2})}} |U_{ij} - \hat{U}_{ij}|.
\]
For simplicity, denote \(C_{\delta} = \frac{1}{2\sqrt{\frac{\delta}{2}(1 - \frac{\delta}{2})}}\). Then, we have
\[
\left| \sqrt{U_{ij}} - \sqrt{\hat{U}_{ij}} \right| \leq \frac{C_{\delta}}{\sqrt{L}} |U_{ij} - \hat{U}_{ij}|.
\]

We also have
\[
|U_{ij} - \hat{U}_{ij}| = \left| \sum_{\ell=1}^L \left( \Omega^{(\ell)}(i,j)(1-\Omega^{(\ell)}(i,j)) - \hat{\Omega}^{(\ell)}(i,j)(1-\hat{\Omega}^{(\ell)}(i,j)) \right) \right| \leq \sum_{\ell=1}^L \left| \Omega^{(\ell)}(i,j)(1-\Omega^{(\ell)}(i,j)) - \hat{\Omega}^{(\ell)}(i,j)(1-\hat{\Omega}^{(\ell)}(i,j)) \right|.
\]

The function \(g(x) = x(1-x)\) is Lipschitz on \([0,1]\) with constant 1, since \(|g'(x)| = |1 - 2x| \leq 1\) for \(x \in [0,1]\). Hence, we have
\[
\left| \Omega^{(\ell)}(i,j)(1-\Omega^{(\ell)}(i,j)) - \hat{\Omega}^{(\ell)}(i,j)(1-\hat{\Omega}^{(\ell)}(i,j)) \right| \leq |\Omega^{(\ell)}(i,j) - \hat{\Omega}^{(\ell)}(i,j)|.
\]

Thus,
\[
|U_{ij} - \hat{U}_{ij}| \leq \sum_{\ell=1}^L |\Omega^{(\ell)}(i,j) - \hat{\Omega}^{(\ell)}(i,j)|.
\]

Define \(S_{ij}^{(\text{abs})} = \sum_{\ell=1}^L |\Omega^{(\ell)}(i,j) - \hat{\Omega}^{(\ell)}(i,j)|\). Since each term \(|\Omega^{(\ell)}(i,j) - \hat{\Omega}^{(\ell)}(i,j)| = O_P\left( \frac{K_{\max} \sqrt{\log n}}{n} \right)\) uniformly in \(\ell\), we have
\[
S_{ij}^{(\text{abs})} = O_P\left( \frac{K_{\max} L \sqrt{\log n}}{n} \right).
\]

Therefore, we get
\[
|D_{ij} - \hat{D}_{ij}| \leq \sqrt{n-1} \cdot \frac{C_{\delta}}{\sqrt{L}} S_{ij}^{(\text{abs})} = O\left( \sqrt{n} \cdot \frac{1}{\sqrt{L}} \cdot \frac{K_{\max} L \sqrt{\log n}}{n} \right) = O\left( \frac{K_{\max} \sqrt{L \log n}}{\sqrt{n}} \right).
\]

Hence, using the above bound and the fact that \(D_{ij}^2 = (n-1)U_{ij} \asymp nL\), we obtain
\[
\left| \frac{1}{\hat{D}_{ij}} - \frac{1}{D_{ij}} \right| \leq \frac{1}{C_0} \frac{|D_{ij} - \hat{D}_{ij}|}{D_{ij}^2} = O\left( \frac{K_{\max} \sqrt{L \log n}}{\sqrt{n} \cdot nL} \right) = O\left( \frac{K_{\max} \sqrt{\log n}}{L^{1/2} n^{3/2}} \right).
\]

Now, we bound \(S_{ij} \left( \frac{1}{\hat{D}_{ij}} - \frac{1}{D_{ij}} \right)\):
\[
|S_{ij}| \leq L \quad (\text{since } |A^{(\ell)}(i,j) - \Omega^{(\ell)}(i,j)| \leq 1),
\]
so, we have
\[
\left| S_{ij} \left( \frac{1}{\hat{D}_{ij}} - \frac{1}{D_{ij}} \right) \right| \leq L \cdot O\left( \frac{K_{\max} \sqrt{\log n}}{L^{1/2} n^{3/2}} \right) = O\left( \frac{K_{\max} \sqrt{L\log n}}{n^{3/2}} \right).
\]

Next, we bound \(\frac{E_{ij}}{\hat{D}_{ij}}\). Note that \(|E_{ij}| \leq S_{ij}^{(\text{abs})} = O_P\left( \frac{K_{\max} L \sqrt{\log n}}{n} \right)\), which gives
\[
\left| \frac{E_{ij}}{\hat{D}_{ij}} \right| \leq \frac{O\left( \frac{K_{\max} L \sqrt{\log n}}{n} \right)}{O(\sqrt{n L})} = O\left( \frac{K_{\max} \sqrt{L \log n}}{n^{3/2}} \right).
\]

Therefore, we have
\[
|\hat{R}(i,j) - R(i,j)| \leq O\left( \frac{K_{\max} \sqrt{L\log n}}{n^{3/2}} \right) + O\left( \frac{K_{\max} \sqrt{L \log n}}{n^{3/2}} \right) = O\left( \frac{K_{\max} \sqrt{L \log n}}{n^{3/2}} \right).
\]

The Frobenius norm is
\[
\|\hat{R} - R\|_F^2 = \sum_{i,j} (\hat{R}(i,j) - R(i,j))^2 =O_P\left( n^2 \cdot \left( \frac{K_{\max}^2 L \log n}{n^3} \right) \right) = O_P\left( \frac{K_{\max}^2 L \log n}{n} \right).
\]

Thus,
\[
\|\hat{R} - R\| \leq \|\hat{R} - R\|_F = O_P\left( K_{\max} \sqrt{\frac{L \log n}{n}} \right).
\]

By Assumption \ref{assump:a3}, we have \(\|\hat{R} - R\| = o_P(1)\), which completes the proof of this lemma.
\end{proof}
\subsection{Proof of Theorem \ref{thm:null}}
\begin{proof}
By Lemma \ref{ideal0}, \(\sigma_1(R) \leq 2 + o_P(1)\). By Lemma \ref{lem:norm-error}, \(\|\hat{R} - R\| = o_P(1)\), so by Weyl's inequality, \(|\sigma_1(\hat{R}) - \sigma_1(R)| \leq \|\hat{R} - R\| = o_P(1)\). Therefore,
\[
\sigma_1(\hat{R}) \leq \sigma_1(R) + o_P(1) \leq 2 + o_P(1),
\]
which implies
\[
\hat{T}_n = \sigma_1(\hat{R}) - 2 \leq o_P(1).
\]

Thus, for any \(\epsilon > 0\), \(\mathbb{P}(\hat{T}_n < \epsilon) \to 1\), which completes the proof of this theorem.
\begin{rem}
(The polynomial convergence rate of \(\|\hat{R} - R\|\)) In Lemma \ref{lem:norm-error}, we established \(\|\hat{R} - R\| = o_P(1)\) under Assumptions \ref{assump:a1}–\ref{assump:a4}. For the sequential testing procedure in Algorithm \ref{alg:DiGoF} with a decaying threshold \(t_n = n^{-\varepsilon}\), it is useful to know when the stronger rate \(\|\hat{R} - R\| = o_P(n^{-\varepsilon})\) holds. From the proof of Lemma \ref{lem:norm-error}, we have
\[
\|\hat{R} - R\| \le \|\hat{R} - R\|_F = O_P\!\left(K_{\max}\sqrt{\frac{L\log n}{n}}\right).
\]
Thus, \(\|\hat{R} - R\| = o_P(n^{-\varepsilon})\) is guaranteed whenever
\[
K_{\max}\sqrt{\frac{L\log n}{n}} = o(n^{-\varepsilon}) \quad\Longleftrightarrow\quad K_{\max}^2 L \log n = o(n^{1-2\varepsilon}). \tag{*}
\]
Condition \((*)\) is stronger than Assumption \ref{assump:a3} (which only requires \(K_{\max}^2 L \log n / n \to 0\)). If \((*)\) holds, then
\[
\|\hat{R} - R\| = O_P\left(\sqrt{\frac{K_{\max}^2 L \log n}{n}}\right) = o_P\left(\sqrt{n^{-2\varepsilon}}\right) = o_P(n^{-\varepsilon}),
\]
which ensures that the estimation error of the residual matrix decays faster than the threshold \(t_n\). While Theorem \ref{thm:null} does not require such a polynomial rate, condition \((*)\) can be adopted in finite‑sample analyses to sharpen the performance guarantees of the sequential procedure.
\end{rem}
\end{proof}

\subsection{Proof of Theorem \ref{thm:power}}
\begin{proof}
We provide a detailed proof for the case \(K_s > K_{s0}\) (the case \(K_r > K_{r0}\) is symmetric). The proof proceeds in the following steps.

\vspace{0.5em}
\noindent\textbf{Step 1: Identify merged sender communities.}  
Since \(K_{s0} < K_s\), by the pigeonhole principle, there exists an estimated sender community \(s_0 \in \{1,\dots,K_{s0}\}\) that contains at least two distinct true sender communities.  
Denote two such true communities as \(k_1, k_2 \in \{1,\dots,K_s\}\), \(k_1 \neq k_2\).  
Define the node sets:
\[
S_1 = \{ i: g^s(i) = k_1 \} \subseteq \hat{C}_{s_0}^s, \qquad 
S_2 = \{ i: g^s(i) = k_2 \} \subseteq \hat{C}_{s_0}^s,
\]
where \(\hat{C}_{s_0}^s = \{ i: \hat{g}^s(i) = s_0 \}\).

\vspace{0.5em}
\noindent\textbf{Step 2: Select a receiver community.}  
By condition (A1), there exists a receiver community \(l^* \in \{1,\dots,K_r\}\) such that
\[
\left| \sum_{\ell=1}^L \left( B^{(\ell)}(k_1, l^*) - B^{(\ell)}(k_2, l^*) \right) \right| \ge \eta L.
\]
Without loss of generality, assume
\begin{align}\label{eqp6}
\sum_{\ell=1}^L \left( B^{(\ell)}(k_1, l^*) - B^{(\ell)}(k_2, l^*) \right) \ge \eta L.
\end{align}

\vspace{0.5em}
\noindent\textbf{Step 3: Define node sets and their sizes.}  
Let
\[
T = \{ j: g^r(j) = l^* \}.
\]
By Assumption \ref{assump:a2}, there exists a constant \(c_0 > 0\) such that
\[
|S_1| \ge c_0 \frac{n}{K_s}, \quad |S_2| \ge c_0 \frac{n}{K_s}, \quad |T| \ge c_0 \frac{n}{K_r}.
\]
Set \(s_1 = |S_1|\), \(s_2 = |S_2|\) and recall that \(K_{\max} = \max(K_s, K_r)\), we have
\begin{align}\label{eqp7}
\min(s_1, s_2) \ge c_0 \frac{n}{K_{\max}}, \qquad |T| \ge c_0 \frac{n}{K_{\max}}.
\end{align}

\vspace{0.5em}
\noindent\textbf{Step 4: Identify a subset of the estimated receiver community.}  
Since nodes in \(T\) are assigned to \(K_{r0}\) estimated receiver communities, by the pigeonhole principle, there exists an estimated receiver community \(r_0 \in \{1,\dots,K_{r0}\}\) such that the subset
\[
T' = \{ j \in T: \hat{g}^r(j) = r_0 \}
\]
satisfies
\begin{align}\label{eqp8}
|T'| \ge \frac{|T|}{K_{r0}} \ge \frac{c_0 n}{K_r K_{r0}} \ge \frac{c_0 n}{K_{\max}^2},
\end{align}
where we used \(K_{r0} \le K_r \le K_{\max}\).

\vspace{0.5em}
\noindent\textbf{Step 5: Construct the deviation matrix and lower-bound its norm.}
Define the aggregated deviation matrix as
\[
\Delta = \sum_{\ell=1}^L \bigl( \Omega^{(\ell)} - \hat{\Omega}^{(\ell)} \bigr),
\]
and consider its submatrix \(\Delta_{S,T'}\), where \(S = S_1 \cup S_2\). For \(i \in S_1\) and \(j \in T'\), since \(i\) is in the estimated sender community \(s_0\) and \(j\) in the estimated receiver community \(r_0\), we have
\[
\hat{\Omega}^{(\ell)}(i,j) = \hat{B}^{(\ell)}(s_0, r_0),
\]
while the true probability is \(\Omega^{(\ell)}(i,j) = B^{(\ell)}(k_1, l^*)\). Hence, we have
\[
\Delta(i,j) = \sum_{\ell=1}^L \bigl( B^{(\ell)}(k_1, l^*) - \hat{B}^{(\ell)}(s_0, r_0) \bigr) \eqqcolon d_1.
\]

Similarly, for \(i \in S_2\), \(j \in T'\), we have
\[
\Delta(i,j) = \sum_{\ell=1}^L \bigl( B^{(\ell)}(k_2, l^*) - \hat{B}^{(\ell)}(s_0, r_0) \bigr) \eqqcolon d_2.
\]

Observe that
\[
d_1 - d_2 = \sum_{\ell=1}^L \bigl( B^{(\ell)}(k_1, l^*) - B^{(\ell)}(k_2, l^*) \bigr) \ge \eta L,
\]
by Equation \eqref{eqp6}. Consequently, we get
\begin{align}\label{eqp9}
\max(|d_1|, |d_2|) \ge \frac{|d_1 - d_2|}{2} \ge \frac{\eta L}{2}.
\end{align}

Now we analyze the structure and rank of \(\Delta_{S,T'}\). We define the following items:
\begin{itemize}
    \item \(s_1 = |S_1|\), \(s_2 = |S_2|\), and \(t' = |T'|\).
    \item Define indicator vectors \(\mathbf{1}_{S_1} \in \mathbb{R}^{s_1 + s_2}\) where \((\mathbf{1}_{S_1})_i = 1\) if the \(i\)-th node in \(S\) belongs to \(S_1\), and 0 otherwise.
    \item Define \(\mathbf{1}_{S_2} \in \mathbb{R}^{s_1 + s_2}\) similarly for \(S_2\).
    \item Define \(\mathbf{1}_{T'} \in \mathbb{R}^{t'}\) as the all-ones vector.
\end{itemize}

Then \(\Delta_{S,T'}\) can be expressed as
\[
\Delta_{S,T'} = d_1 \cdot \mathbf{1}_{S_1} \mathbf{1}_{T'}^\top + d_2 \cdot \mathbf{1}_{S_2} \mathbf{1}_{T'}^\top.
\]

To see this, note that for \(i \in S_1\) and \(j \in T'\), the \((i,j)\)-entry of \(\mathbf{1}_{S_1} \mathbf{1}_{T'}^\top\) is 1, while the corresponding entry of \(\mathbf{1}_{S_2} \mathbf{1}_{T'}^\top\) is 0, giving \(\Delta(i,j) = d_1\). Similarly for \(i \in S_2\).

The matrix \(\mathbf{1}_{S_1} \mathbf{1}_{T'}^\top\) is an outer product of two vectors, hence has rank 1. The same holds for \(\mathbf{1}_{S_2} \mathbf{1}_{T'}^\top\). Since the rank of a sum of two matrices is at most the sum of their ranks, we have
\begin{align}\label{eqp10}
\operatorname{rank}(\Delta_{S,T'}) \le \operatorname{rank}(\mathbf{1}_{S_1} \mathbf{1}_{T'}^\top) + \operatorname{rank}(\mathbf{1}_{S_2} \mathbf{1}_{T'}^\top) = 1 + 1 = 2.
\end{align}

The Frobenius norm of \(\Delta_{S,T'}\) is
\[
\| \Delta_{S,T'} \|_F^2 = \sum_{i \in S_1} \sum_{j \in T'} d_1^2 \;+\; \sum_{i \in S_2} \sum_{j \in T'} d_2^2
= t' (s_1 d_1^2 + s_2 d_2^2).
\]

Using Equation \eqref{eqp9}, we have
\begin{align}\label{eqp11}
\| \Delta_{S,T'} \|_F^2 \ge t' \cdot \min(s_1, s_2) \cdot \max(d_1^2, d_2^2) \ge t' \cdot \min(s_1, s_2) \cdot \left( \frac{\eta L}{2} \right)^2.
\end{align}

Now, from Equations \eqref{eqp7} and \eqref{eqp8}, we have the lower bounds
\[
\min(s_1, s_2) \ge c_0 \frac{n}{K_{\max}}, \quad t' \ge \frac{c_0 n}{K_{\max}^2}.
\]

Substituting these into \eqref{eqp11} obtains
\begin{align*}
\| \Delta_{S,T'} \|_F \ge \frac{\eta L}{2} \sqrt{ t' \cdot \min(s_1, s_2) }\ge \frac{\eta L}{2} \sqrt{ \frac{c_0 n}{K_{\max}^2} \cdot \frac{c_0 n}{K_{\max}} } = \frac{c_0 \eta}{2} \cdot \frac{L n}{K_{\max}^{3/2}}.
\end{align*}

Since \(\Delta_{S,T'}\) has rank at most 2 (from Equation \eqref{eqp10}), its spectral norm satisfies the following inequality
\[
\| \Delta_{S,T'} \| \ge \frac{\| \Delta_{S,T'} \|_F}{\sqrt{\operatorname{rank}(\Delta_{S,T'})}} \ge \frac{\| \Delta_{S,T'} \|_F}{\sqrt{2}},
\]
which gives
\begin{align}\label{eqp12}
\| \Delta_{S,T'} \| \ge \frac{c_0 \eta}{2\sqrt{2}} \cdot \frac{L n}{K_{\max}^{3/2}}.
\end{align}

\vspace{0.5em}
\noindent\textbf{Step 6: Control the random part.} 
Define
\[
W \;:=\; \sum_{\ell=1}^{L}\bigl( A^{(\ell)} - \Omega^{(\ell)} \bigr),
\]
and consider its submatrix \(W_{S,T'}\) obtained by restricting rows to \(S=S_{1}\cup S_{2}\) and columns to \(T'\), where \(W_{S,T'}\) is the random fluctuation matrix over the index sets \(S\) and \(T'\). We now establish a high-probability upper bound for the spectral norm of \(W_{S,T'}\) using Lemma \ref{Rec312} (the rectangular version of Corollary 3.12 in \citep{Afonso2016}).

The matrix \(W_{S,T'}\) is of size \(s \times t'\) with \(s = |S|\) and \(t' = |T'|\). Its entries are independent because edges are independent across different node-pairs and layers. For each \((i,j) \in S \times T'\), we have
\[
W_{ij} = \sum_{\ell=1}^{L} \bigl( A^{(\ell)}(i,j) - \Omega^{(\ell)}(i,j) \bigr),
\]
which satisfies \(\mathbb{E}[W_{ij}] = 0\) and, by Assumption \ref{assump:a1},
\[
\operatorname{Var}(W_{ij}) = \sum_{\ell=1}^{L} \Omega^{(\ell)}(i,j)\bigl(1-\Omega^{(\ell)}(i,j)\bigr) \leq \frac{L}{4}.
\]

Moreover, \(|W_{ij}| \leq L\) because each term is bounded by \(1\). Define the matrix \(X = W_{S,T'}\). Then \(X\) has independent entries, \(|X_{ij}| \leq L\), and
\[
\sum_{j \in T'} \mathbb{E}[X_{ij}^2] \leq t' \cdot \frac{L}{4}, \qquad
\sum_{i \in S} \mathbb{E}[X_{ij}^2] \leq s \cdot \frac{L}{4}.
\]
Hence,
\[
\tilde{\tilde{\sigma}} := \max\left\{ \max_{i \in S} \sqrt{\sum_{j \in T'} \mathbb{E}[X_{ij}^2]}, \;
                         \max_{j \in T'} \sqrt{\sum_{i \in S} \mathbb{E}[X_{ij}^2]} \right\}
               \leq \sqrt{\frac{L \max(s,t')}{4}} \leq \frac{1}{2}\sqrt{L n},
\]
and we set \(\tilde{\tilde{\sigma}}_* = L\) as the upper bound of all entries. Applying Lemma \ref{Rec312} to \(X\), for any \(0 < \eta \leq 1/2\) and \(t \geq 0\),
\[
\mathbb{P}\Bigl( \|X\| \geq (1+\eta)2\tilde{\tilde{\sigma}} + t \Bigr)
   \leq (s \wedge t') \mathrm{exp}\Bigl( -\frac{t^2}{C \tilde{\tilde{\sigma}}_*^2} \Bigr)
   \leq n \mathrm{exp}\Bigl( -\frac{t^2}{C L^2} \Bigr),
\]
where \(C>0\) is a universal constant from the lemma.

Choose \(\eta = 1/2\) and set \(t = M \sqrt{nL}\) with a constant \(M > 0\) to be determined. We get
\[
(1+\eta)2\tilde{\tilde{\sigma}} \leq 3 \sqrt{L n}/2,
\]
which gives
\begin{align}\label{eqp13}
\mathbb{P}\Bigl( \|W_{S,T'}\| \geq \frac{3}{2}\sqrt{L n} + M\sqrt{nL} \Bigr)
   \leq n \exp\Bigl( -\frac{M^2 n L}{C L^2} \Bigr)
   = n \exp\Bigl( -\frac{M^2 n}{C L} \Bigr). 
\end{align}

Now, we show that the right-hand side of Equation \eqref{eqp13} tends to zero as \(n \to \infty\). 
Let 
\[
P_n = n \exp\Bigl( -\frac{M^2 n}{C L} \Bigr).
\]

Taking logarithms, we have
\[
\log P_n = \log n - \frac{M^2 n}{C L}.
\]

By Assumption \ref{assump:a3} and the fact that \( K_{\max} \geq 1 \), we have
\[
\frac{L \log n}{n} \to 0.\]

Rewrite \(\log P_n\) as
\[
\log P_n = \log n \left( 1 - \frac{M^2}{C} \cdot \frac{n}{L \log n} \right).
\]

Since \(\frac{n}{L \log n} \to \infty\), the factor inside the parentheses tends to \(-\infty\), and thus \(\log P_n \to -\infty\). Hence, we get
\[
P_n = \exp(\log P_n) \to 0.
\]

Therefore, for any fixed \(M > 0\), the probability in Equation \eqref{eqp13} tends to zero. This implies that with probability tending to 1, we have
\[
\|W_{S,T'}\| \leq \frac{3}{2}\sqrt{L n} + M\sqrt{nL} = O\left( \sqrt{nL} \right),
\]
or equivalently,
\[
\|W_{S,T'}\| = O_{\mathbb{P}}\bigl(\sqrt{nL}\bigr). 
\]

\vspace{0.5em}
\noindent\textbf{Step 7: Lower‑bound the aggregated residual submatrix.}  
Let \(Z = \sum_{\ell=1}^L (A^{(\ell)} - \hat{\Omega}^{(\ell)}) = W + \Delta\). Then \(Z_{S,T'} = W_{S,T'} + \Delta_{S,T'}\). By the triangle inequality and the lower bound for \(\|\Delta_{S,T'}\|\) from Equation \eqref{eqp12}, we have
\begin{align}\label{eqp14}
\| Z_{S,T'} \| \ge \| \Delta_{S,T'} \| - \| W_{S,T'} \|
   \ge \frac{c_0 \eta}{2\sqrt{2}} \cdot \frac{L n}{K_{\max}^{3/2}} - O_P\bigl(\sqrt{nL}\bigr). 
\end{align}

\vspace{0.5em}
\noindent\textbf{Step 8: Lower‑bound the spectral norm of the normalized residual matrix.}  
For \(i \neq j\), the normalizing factor is
\[
\hat{D}_{ij} = \sqrt{ (n-1) \sum_{\ell=1}^L \hat{\Omega}^{(\ell)}(i,j)\bigl(1-\hat{\Omega}^{(\ell)}(i,j)\bigr) }.
\]
Since \(\hat{\Omega}^{(\ell)}(i,j) \in [0,1]\), we have \(\hat{\Omega}^{(\ell)}(i,j)(1-\hat{\Omega}^{(\ell)}(i,j)) \le 1/4\), and thus
\[
\hat{D}_{ij} \le \sqrt{ (n-1) \cdot \frac{L}{4} } = \frac{1}{2} \sqrt{ (n-1)L }. 
\]

Consequently, for any \(i \in S\), \(j \in T'\), we have
\begin{align}\label{eqp15}
\frac{1}{\hat{D}_{ij}} \ge \frac{2}{\sqrt{(n-1)L}}. 
\end{align}

Consider the submatrix \(\hat{R}_{S,T'}\) with entries \(\hat{R}(i,j) = Z(i,j) / \hat{D}_{ij}\). Since all nodes in \(S\) share the same estimated sender community \(s_0\) and all nodes in \(T'\) share the same estimated receiver community \(r_0\), the normalization factor \(\hat{D}_{ij}\) is constant over \(S \times T'\). Denote this common value by \(\hat{d}\). Then \(\hat{R}_{S,T'} = \hat{d}^{-1} Z_{S,T'}\). Thus, using Equation \eqref{eqp15} gets  
\[
\| \hat{R}_{S,T'} \| \ge \frac{2}{\sqrt{(n-1)L}} \| Z_{S,T'} \|.
\]

By Equation \eqref{eqp14}, we get
\begin{align}\label{eqp16}
\| \hat{R}_{S,T'} \|
&\ge \frac{2}{\sqrt{(n-1)L}} \left( \frac{c_0 \eta}{2\sqrt{2}} \cdot \frac{L n}{K_{\max}^{3/2}} - O_P\bigl(\sqrt{nL}\bigr) \right) = \frac{c_0 \eta}{\sqrt{2}} \cdot \frac{\sqrt{nL}}{K_{\max}^{3/2}} \cdot \sqrt{\frac{n}{n-1}} \;-\; O_P(1). 
\end{align}

\vspace{0.5em}
\noindent\textbf{Step 9: Prove divergence.}  
Because \(\sigma_1(\hat{R}) \ge \| \hat{R}_{S,T'} \|\), it suffices to show that the right‑hand side of Equation \eqref{eqp16} diverges in probability. The leading deterministic term is
\[
\frac{c_0 \eta}{\sqrt{2}} \cdot \frac{\sqrt{nL}}{K_{\max}^{3/2}} \cdot \sqrt{\frac{n}{n-1}}.
\]
Since \(\sqrt{\frac{n}{n-1}} \to 1\), we have \( \asymp \frac{\sqrt{nL}}{K_{\max}^{3/2}} = \sqrt{ \frac{nL}{K_{\max}^3} }\). By condition (A2), \(\frac{nL}{K_{\max}^3} \to \infty\), which implies \(\frac{c_0 \eta}{\sqrt{2}} \cdot \frac{\sqrt{nL}}{K_{\max}^{3/2}} \cdot \sqrt{\frac{n}{n-1}} \to \infty\). The remainder term \(O_P(1)\) is stochastically bounded. Therefore, we have
\[
\| \hat{R}_{S,T'} \| \stackrel{P}{\to} \infty,
\]
which implies \(\sigma_1(\hat{R}) \stackrel{P}{\to} \infty\) and consequently \(\hat{T}_n = \sigma_1(\hat{R}) - 2 \stackrel{P}{\to} \infty\).

\vspace{0.5em}
\noindent\textbf{Step 10: The case \(K_{r0} < K_r\).}  
If \(K_{r0} < K_r\), the roles of sender and receiver are interchanged. The same argument, selecting two distinct true receiver communities and a suitable sender community via the symmetric version of (A1), yields an analogous divergence result. This completes the proof of this theorem.
\end{proof}
\section{Proofs for the MLDiGoF Algorithm}
\subsection{Proof of Theorem \ref{thm:consistency}}
\begin{proof}
We adopt all notations from the main text and Appendix. 
Let $K_{\mathrm{cand}}$ be any upper bound satisfying $K_{\mathrm{cand}} \ge K_{\max}$ for all sufficiently large $n$. 
(The default choice $K_{\mathrm{cand}} = \lfloor \sqrt{n / \log n} \rfloor$ works because Assumption \ref{assump:a3} implies $K_{\max} = o(\sqrt{n/\log n})$.)  For each candidate pair $(k_s, k_r)$, let  $\hat{T}_n(k_s, k_r)$ be the test statistic computed via Equation \eqref{eq:test_stat} using Algorithm \ref{alg:spectral} with input $(K_{s0}, K_{r0}) = (k_s, k_r)$. Let $\mathcal{P}_n = \{(k_s^{(1)}, k_r^{(1)}), \dots, (k_s^{(M_n)}, k_r^{(M_n)})\}$ be the lexicographically ordered sequence of candidate pairs from $(1,1)$ to $(K_{\mathrm{cand}}, K_{\mathrm{cand}})$, where $M_n = K_{\mathrm{cand}}^2$. 
Since $K_s, K_r \le K_{\max} \le K_{\mathrm{cand}}$ for all large $n$, the true pair $(K_s, K_r)$ belongs to $\mathcal{P}_n$. 
Let $m_*(n)$ denote its index in $\mathcal{P}_n$. Note that because we allow $K_{\max}$ to grow with $n$, $m_*(n)$ may also grow with $n$.

Define the events
\[
\tilde{\mathcal{A}}_n \coloneqq \left\{ \hat{T}_n(K_s, K_r) < t_n \right\}, \qquad
\tilde{\mathcal{B}}_n \coloneqq \bigcap_{m=1}^{m_*(n)-1} \left\{ \hat{T}_n(k_s^{(m)}, k_r^{(m)}) \ge t_n \right\}.
\]

Algorithm \ref{alg:DiGoF}  returns $(K_s, K_r)$ exactly when $\tilde{\mathcal{A}}_n \cap \tilde{\mathcal{B}}_n$ occurs.
We will prove that $\mathbb{P}(\tilde{\mathcal{A}}_n) \to 1$ and $\mathbb{P}(\tilde{\mathcal{B}}_n) \to 1$. Then, by the union bound, we can obtain
\[
\mathbb{P}(\tilde{\mathcal{A}}_n \cap \tilde{\mathcal{B}}_n) = 1 - \mathbb{P}(\tilde{\mathcal{A}}_n^c \cup \tilde{\mathcal{B}}_n^c) \ge 1 - \mathbb{P}(\tilde{\mathcal{A}}_n^c) - \mathbb{P}(\tilde{\mathcal{B}}_n^c) \to 1.
\]

\vspace{1em}
\noindent\textbf{Part 1:  Behavior under the true model: $\mathbb{P}(\tilde{\mathcal{A}}_n) \to 1$.}

Let $\hat{T}_n^* \coloneqq \hat{T}_n(K_s, K_r)$. We need to show $\mathbb{P}(\hat{T}_n^* \ge t_n) \to 0$. Recall the oracle statistic $T_n\coloneqq \sigma_1(R) - 2$ defined in Equation \eqref{idealTestStatistic}, where $R$ is the ideal residual matrix constructed with the true parameters. 
From Equation \eqref{eqp1}, for any $x > 0$, we have 
\begin{align}\label{eq:tail-oracle}
\mathbb{P}\left( T_n\ge x \right) \le n \exp\left( - \frac{x^2 (n-1)}{C_0 L} \right),
\end{align}
where $C_0 = 4C/[\delta(1-\delta)]$ and $C$ is the universal constant from Lemma \ref{Rec312}. By Weyl's inequality for singular values, we have
\[
| \sigma_1(\hat{R}) - \sigma_1(R) | \le \| \hat{R} - R \|,
\]
where $\hat{R}$ is the normalized residual matrix constructed with the estimated communities from Algorithm \ref{alg:spectral} using $(K_{s0},K_{r0}) = (K_s,K_r)$. Consequently, we get
\begin{align*}
| \hat{T}_n^* - T_n| \le \| \hat{R} - R \|.
\end{align*}

From this inequality, we obtain the one-sided bound:
\begin{equation}\label{eq:Tn_hat_upper}
\hat{T}_n^* = \sigma_1(\hat{R}) - 2 \le \sigma_1(R) - 2 + \| \hat{R} - R \| = T_n + \| \hat{R} - R \|.
\end{equation}

Now consider the event $\{ \hat{T}_n^* \ge t_n \}$. By Equation \eqref{eq:Tn_hat_upper}, we have
\[
\{ \hat{T}_n^* \ge t_n \} \subseteq \{ T_n + \| \hat{R} - R \| \ge t_n \}.
\]

If $T_n + \| \hat{R} - R \| \ge t_n$, then at least one of $T_n \ge t_n/3$ or $\| \hat{R} - R \| \ge t_n/3$ must hold. Indeed, if both $T_n < t_n/3$ and $\| \hat{R} - R \| < t_n/3$, then $T_n + \| \hat{R} - R \| < 2t_n/3 < t_n$ (since $t_n > 0$). Thus, we have
\[
\{ T_n + \| \hat{R} - R \| \ge t_n \} \subseteq \{ T_n \ge t_n/3 \} \cup \{ \| \hat{R} - R \| \ge t_n/3 \}.
\]

Applying the union bound, we obtain
\begin{equation}\label{eq:prob_split}
\mathbb{P}\bigl( \hat{T}_n^* \ge t_n \bigr) 
\le \mathbb{P}\bigl( T_n + \| \hat{R} - R \| \ge t_n \bigr) 
\le \mathbb{P}\bigl( T_n \ge t_n/3 \bigr) + \mathbb{P}\bigl( \| \hat{R} - R \| \ge t_n/3 \bigr).
\end{equation}

\paragraph{Bounding the first term in Equation \eqref{eq:prob_split}} Applying Equation \eqref{eq:tail-oracle} with $x = t_n/3$ gives
\[
\mathbb{P}\left( T_n\ge t_n/3 \right) \le n \exp\left( - \frac{t_n^2 (n-1)}{9 C_0 L} \right).
\]

We show the right-hand side tends to zero. Condition (C1) states $\alpha_n = o(t_n)$. Squaring yields
\[
\frac{K_{\max}^2 L \log n}{n} = o(t_n^2) \quad\Longleftrightarrow\quad \frac{t_n^2 n}{K_{\max}^2 L \log n} \to \infty .
\]

Since $K_{\max}\ge 1$, the above implies $t_n^2 n / (L\log n) \to \infty$. Hence, we have
\[
n \exp\left( - \frac{t_n^2 (n-1)}{9 C_0 L} \right) 
=n \exp\left( - \frac{t_n^2 n}{9 C_0 L} \cdot \frac{n-1}{n} \right) 
\le n \exp\left( - \frac{t_n^2 n}{10 C_0 L} \right) \to 0.
\]
for sufficiently large $n$ (because $(n-1)/n \to 1$). Thus, we get
\begin{align}\label{eq:oracle-bound}
\lim_{n\to\infty} \mathbb{P}\left( T_n\ge t_n/3 \right) = 0 .
\end{align}

\paragraph{Bounding the second term in Equation \eqref{eq:prob_split}} From Lemma \ref{lem:norm-error} and its proof, we have
\[
\| \hat{R} - R \| = O_P\!\left( K_{\max} \sqrt{\frac{L \log n}{n}} \right) = O_P(\alpha_n).
\]

More precisely, there exists a constant $C_{\mathrm{est}} > 0$ such that for all sufficiently large $n$, we have
\begin{align*}
\mathbb{P}\left( \| \hat{R} - R \| \ge C_{\mathrm{est}} \alpha_n \right) \le \frac{2}{n^2}.
\end{align*}

Condition (C1) gives $\alpha_n = o(t_n)$. Hence, for any fixed $\varepsilon > 0$, we have $C_{\mathrm{est}} \alpha_n \le \varepsilon t_n$ for all large $n$. Choose $\varepsilon = 1/3$. Then for $n$ large enough, we have
\[
\mathbb{P}\left( \| \hat{R} - R \| \ge t_n/3 \right) 
\le \mathbb{P}\left( \| \hat{R} - R \| \ge C_{\mathrm{est}} \alpha_n \right) 
\le \frac{2}{n^2} \to 0.
\]

Thus, we get
\begin{align}\label{eq:error-bound}
\lim_{n\to\infty} \mathbb{P}\left( \| \hat{R} - R \| \ge t_n/3 \right) = 0 .
\end{align}

\paragraph{Conclusion for $\tilde{\mathcal{A}}_n$} Inserting Equations \eqref{eq:oracle-bound} and \eqref{eq:error-bound} into Equation \eqref{eq:prob_split} yields $\mathbb{P}(\hat{T}_n^* \ge t_n) \to 0$, i.e. $\mathbb{P}(\tilde{\mathcal{A}}_n) \to 1$.

\vspace{1em}
\noindent\textbf{Part 2:  Behavior under underfitted models: $\mathbb{P}(\tilde{\mathcal{B}}_n) \to 1$.}

For any underfitted candidate $(k_s, k_r)$ (i.e., $k_s < K_s$ or $k_r < K_r$), we need a lower bound for $\hat{T}_n(k_s, k_r)$ that holds uniformly with high probability. From the proof of Theorem \ref{thm:power}, we have that for any underfitted candidate, there exist node sets $S$ and $T'$ with sizes satisfying $|S| \ge c_0 n/K_{\max}$ and $|T'| \ge c_0 n/K_{\max}^2$ such that
\[
\hat{T}_n(k_s, k_r) \ge \frac{c_0\eta}{\sqrt{2}}\,\frac{\sqrt{nL}}{K_{\max}^{3/2}}\sqrt{\frac{n}{n-1}} - \frac{2}{\sqrt{(n-1)L}}\|W_{S,T'}\| - 2,
\]
where $W_{S,T'}$ is the restriction of $W=\sum_{\ell=1}^L (A^{(\ell)}-\Omega^{(\ell)})$ to $S\times T'$. Define
\[
Z_n^{(k_s,k_r)} := \frac{2}{\sqrt{(n-1)L}}\|W_{S,T'}\| + 2.
\]
Since $\sqrt{n/(n-1)} \to 1$, there exists a constant $c_2 > 0$ (e.g., $c_2 = c_0\eta/(2\sqrt{2})$) such that for all sufficiently large $n$, we have
\begin{equation}\label{eq:underfit-lower}
\hat{T}_n(k_s, k_r) \ge c_2 \frac{\sqrt{nL}}{K_{\max}^{3/2}} - Z_n^{(k_s,k_r)}.
\end{equation}

Now we control the tail of $Z_n^{(k_s,k_r)}$. Recall from Equation \eqref{eqp13} in the proof of Theorem \ref{thm:power} that for any $M>0$, we have
\begin{equation}\label{eq:W-tail}
\mathbb{P}\Bigl( \|W_{S,T'}\| \ge \tfrac32\sqrt{L n} + M\sqrt{nL} \Bigr) \le n \exp\Bigl( - \frac{M^2 n}{C L} \Bigr),
\end{equation}
where $C>0$ is a universal constant. Thus, we have $Z_n^{(k_s,k_r)} = O_{\mathbb{P}}(1)$, and the bound in Equation \eqref{eq:W-tail} is uniform over all underfitted candidates because it depends only on $n$, $L$, and the entrywise bounds of $W_{S,T'}$, which are uniform.

Define $\gamma_n := c_2 \frac{\sqrt{nL}}{K_{\max}^{3/2}}=c_2\beta_n$. By condition (C2), we must have $t_n \le \gamma_n/2$ for all large $n$. We now bound $\mathbb{P}\bigl( \hat{T}_n(k_s, k_r) < t_n \bigr)$. By Equation \eqref{eq:underfit-lower}, we have
\begin{align*}
\mathbb{P}\bigl( \hat{T}_n(k_s, k_r) < t_n \bigr) 
&\le \mathbb{P}\bigl( c_2 \frac{\sqrt{nL}}{K_{\max}^{3/2}} - Z_n^{(k_s,k_r)} < t_n \bigr) \le \mathbb{P}\bigl( Z_n^{(k_s,k_r)} > \gamma_n - t_n \bigr) \le \mathbb{P}\bigl( Z_n^{(k_s,k_r)} > \gamma_n/2 \bigr) \quad \text{(since $t_n \le \gamma_n/2$)}.
\end{align*}

Now, $Z_n^{(k_s,k_r)} > \gamma_n/2$ implies $\frac{2}{\sqrt{(n-1)L}}\|W_{S,T'}\| > \gamma_n/2 - 2$. For large $n$, $\gamma_n/2 - 2 \ge \gamma_n/4$ because $\gamma_n \to \infty$. Hence, we have
\[
\mathbb{P}\bigl( Z_n^{(k_s,k_r)} > \gamma_n/2 \bigr) \le \mathbb{P}\Bigl( \|W_{S,T'}\| > \frac{\gamma_n}{4} \cdot \frac{\sqrt{(n-1)L}}{2} \Bigr) \le \mathbb{P}\Bigl( \|W_{S,T'}\| > \frac{c_2}{8} \frac{nL}{K_{\max}^{3/2}} \Bigr),
\]
where we used that $\frac{\gamma_n}{4} \cdot \frac{\sqrt{(n-1)L}}{2} \sim \frac{c_2}{8} \frac{nL}{K_{\max}^{3/2}}$. To apply Equation \eqref{eq:W-tail}, set $M$ such that $\tfrac32\sqrt{L n} + M\sqrt{nL} = \frac{c_2}{8} \frac{nL}{K_{\max}^{3/2}}$, i.e.
\[
M = \frac{c_2}{8} \frac{\sqrt{nL}}{K_{\max}^{3/2}} - \frac{3}{2}.
\]

For large $n$, $M \ge \frac{c_2}{16} \frac{\sqrt{nL}}{K_{\max}^{3/2}}$. Then by Equation \eqref{eq:W-tail}, we have
\begin{align*}
\mathbb{P}\Bigl( \|W_{S,T'}\| > \frac{c_2}{8} \frac{nL}{K_{\max}^{3/2}} \Bigr) 
&\le n \exp\Bigl( - \frac{M^2 n}{C L} \Bigr) \le n \exp\Bigl( - \frac{c_2^2}{256 C} \cdot \frac{nL}{K_{\max}^3} \cdot \frac{n}{L} \Bigr)
= n \exp\Bigl( - \frac{c_2^2}{256 C} \cdot \frac{n^2}{K_{\max}^3} \Bigr).
\end{align*}

First, recall that Assumption~\ref{assump:a3} gives \( \frac{K_{\max}^2 L \log n}{n} \to 0 \).  
Since \( L \ge 1 \), we have \( \frac{K_{\max}^2 \log n}{n} \to 0 \), i.e., \(K_{\max}^2 = o\!\left(\frac{n}{\log n}\right)\). Thus, there exists a constant \( a > 0 \) such that for all sufficiently large \( n \),  
\begin{align}\label{eq:Kmax-bound}
K_{\max}^2 \le a \frac{n}{\log n}. 
\end{align}

Consequently,  we get
\[
K_{\max}^3 = K_{\max}^2 \cdot K_{\max} \le \left(a \frac{n}{\log n}\right) \cdot \sqrt{a \frac{n}{\log n}} = a^{3/2} \frac{n^{3/2}}{(\log n)^{3/2}}. 
\]  
 
Hence, we have
\begin{align*}
\frac{n^2}{K_{\max}^3} \ge \frac{n^2}{a^{3/2} n^{3/2} / (\log n)^{3/2}} = \frac{1}{a^{3/2}} \, n^{1/2} (\log n)^{3/2},
\end{align*}
which implies  
\[
\exp\Bigl( - \frac{c_2^2}{256 C} \cdot \frac{n^2}{K_{\max}^3} \Bigr) \le \exp\Bigl( - \frac{c_2^2}{256 C a^{3/2}} \, n^{1/2} (\log n)^{3/2} \Bigr). 
\]  

Multiplying both sides by \( n \) obtains
\begin{align}\label{eq:exp-ineq}
n \exp\Bigl( - \frac{c_2^2}{256 C} \cdot \frac{n^2}{K_{\max}^3} \Bigr) \le n \exp\Bigl( - \frac{c_2^2}{256 C a^{3/2}} \, n^{1/2} (\log n)^{3/2} \Bigr). 
\end{align}

Next, we show that the right‑hand side of Equation \eqref{eq:exp-ineq} is \( o\bigl(1/K_{\max}^2\bigr) \). From Equation \eqref{eq:Kmax-bound}, we have \( \displaystyle \frac{1}{K_{\max}^2} \ge \frac{\log n}{a n} \).  
Therefore, it suffices to prove  
\begin{align}\label{eq:ratio-goal}
\frac{ n \exp\Bigl( - \frac{c_2^2}{256 C a^{3/2}} \, n^{1/2} (\log n)^{3/2} \Bigr) }{ (\log n)/(a n) } = a n^2 \, \frac{ \exp\Bigl( - \frac{c_2^2}{256 C a^{3/2}} \, n^{1/2} (\log n)^{3/2} \Bigr) }{ \log n } \; \xrightarrow[n\to\infty]{} \; 0. 
\end{align}

Taking logarithms gives 
\[
\log\Bigl( a n^2 / \log n \Bigr) \;-\; \frac{c_2^2}{256 C a^{3/2}} \, n^{1/2} (\log n)^{3/2}. 
\]  

As \( n \to \infty \), the second (negative) term dominates because \( n^{1/2} (\log n)^{3/2} \) grows faster than \( \log n \). Hence the whole expression tends to \( -\infty \), which implies the ratio in Equation \eqref{eq:ratio-goal} converges to zero. Thus, we get  
\begin{align}\label{eq:final-o-bound}
n \exp\Bigl( - \frac{c_2^2}{256 C a^{3/2}} \, n^{1/2} (\log n)^{3/2} \Bigr) = o\!\Bigl(\frac{1}{K_{\max}^2}\Bigr). 
\end{align}

Combining Equations \eqref{eq:exp-ineq} and \eqref{eq:final-o-bound} we obtain the desired chain
\[
n \exp\Bigl( - \frac{c_2^2}{256 C} \cdot \frac{n^2}{K_{\max}^3} \Bigr) \le n \exp\Bigl( - \frac{c_2^2}{256 C a^{3/2}} n^{1/2} (\log n)^{3/2} \Bigr) = o\!\left(\frac{1}{K_{\max}^2}\right).
\]

Therefore, for any underfitted candidate, we have
\[
\mathbb{P}\bigl( \hat{T}_n(k_s, k_r) < t_n \bigr) = o\!\left(\frac{1}{K_{\max}^2}\right).
\]

Now consider the number of underfitted candidates. The true pair $(K_s, K_r)$ satisfies $K_s \le K_{\max}$ and $K_r \le K_{\max}$. Since the candidate pairs are ordered lexicographically from $(1,1)$ to $(K_{\mathrm{cand}}, K_{\mathrm{cand}})$, every underfitted candidate (i.e., with $k_s < K_s$ or $k_r < K_r$) must satisfy $k_s \le K_{\max}$ and $k_r \le K_{\max}$. Therefore, the number of underfitted candidates, $m_*(n)-1$, is at most $K_{\max}^2$. Applying the union bound gives
\[
\mathbb{P}(\tilde{\mathcal{B}}_n^c) \le \sum_{m=1}^{m_*(n)-1} \mathbb{P}\bigl( \hat{T}_n(k_s^{(m)}, k_r^{(m)}) < t_n \bigr) \le K_{\max}^2 \cdot o\!\left(\frac{1}{K_{\max}^2}\right) = o(1).
\]

Thus, we get $\mathbb{P}(\tilde{\mathcal{B}}_n) \to 1$.

\vspace{1em}
\noindent\textbf{Part 3:  Completion of the proof.}

We have shown $\mathbb{P}(\tilde{\mathcal{A}}_n) \to 1$ and $\mathbb{P}(\tilde{\mathcal{B}}_n) \to 1$. Therefore, we have
\[
\mathbb{P}\bigl( (\hat{K}_s, \hat{K}_r) = (K_s, K_r) \bigr)
= \mathbb{P}(\tilde{\mathcal{A}}_n \cap \tilde{\mathcal{B}}_n) \ge 1 - \mathbb{P}(\tilde{\mathcal{A}}_n^c) - \mathbb{P}(\tilde{\mathcal{B}}_n^c) \to 1,
\]
which completes the proof of this theorem.
\end{proof}

\section{Proofs for the MLRDiGoF algorithm}\label{sec:proofs-RDiGoF}

\subsection{Preliminary lemmas}

We first establish two lemmas that characterize the asymptotic behavior of the test statistic \(\hat{T}_n\) under correctly specified and underfitted models. These lemmas are essential for proving Theorems \ref{thm:ratio} and \ref{thm:RDiGoF-consistency}. Throughout, we allow \(K_{\max} = \max(K_s, K_r)\) to grow with \(n\), subject to Assumption \ref{assump:a3} and condition (A2) of Theorem \ref{thm:power}.

\begin{lem}[Behavior of the test statistic at the true model]\label{lem:true-model-ratio}
For the true candidate \(m_*\) (i.e., \((k_s^{(m_*)}, k_r^{(m_*)}) = (K_s, K_r)\)), we have
\[
|\hat{T}_n(m_*)| = o_{\mathbb{P}}(1).
\]
\end{lem}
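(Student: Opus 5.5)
The plan is to prove two one-sided bounds separately. The upper bound $\hat{T}_n(m_*)\le o_{\mathbb{P}}(1)$ is exactly Theorem \ref{thm:null}, since at index $m_*$ the candidate pair equals $(K_s,K_r)$ and $H_0$ holds. The new ingredient is the matching lower bound $\hat{T}_n(m_*)\ge -o_{\mathbb{P}}(1)$, i.e. $\sigma_1(\hat{R})\ge 2-o_{\mathbb{P}}(1)$. By Weyl's inequality, $|\sigma_1(\hat{R})-\sigma_1(R)|\le\|\hat{R}-R\|$, and Lemma \ref{lem:norm-error} gives $\|\hat{R}-R\|=o_{\mathbb{P}}(1)$. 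It therefore suffices to show, for the oracle residual matrix $R$ of Lemma \ref{ideal0}, that $\mathbb{P}(\sigma_1(R)>2-\epsilon)\to1$ for every $\epsilon>0$. Combining both sides then gives $|\hat{T}_n(m_*)|=o_{\mathbb{P}}(1)$.

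For the lower bound on $\sigma_1(R)$ I would use the trace (moment) method. Fix an integer $k\ge1$. Since $\sigma_1(R)^{2k}\ge\frac1n\operatorname{tr}\bigl((RR^\top)^k\bigr)$, it is enough to show that $\frac1n\operatorname{tr}\bigl((RR^\top)^k\bigr)\to \mathrm{Cat}_k$ in probability, where $\mathrm{Cat}_k$ is the $k$-th Catalan number (the $k$-th moment of the Marchenko--Pastur law with ratio one). The structure of $R$ is what makes this work: its off-diagonal entries are independent, centered, have variance exactly $1/(n-1)$, and are bounded by $M_n=\sqrt{L/((n-1)\delta(1-\delta))}\to0$ by Assumption \ref{assump:a3}.

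The expectation is computed in the standard way. Expand $\frac1n\mathbb{E}\operatorname{tr}\bigl((RR^\top)^k\bigr)$ as a sum over closed bipartite walks of length $2k$. Walks in which some edge is traversed exactly once vanish because the entries are centered. Walks whose edges are all traversed exactly twice, with the maximal number $k+1$ of distinct vertices, are indexed by non-crossing pair partitions; there are $\mathrm{Cat}_k$ of them, and each contributes $(1+o(1))$ after normalization. Every other walk either has fewer distinct vertices, losing a factor $n^{-1}$, or uses some entry at least three times, in which case the bound $\mathbb{E}|R_{ij}|^{p}\le M_n^{p-2}/(n-1)$ gives an extra factor $M_n^{p-2}\to0$. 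Since $k$ is fixed, these contributions are $o(1)$. The diagonal zeros only remove $O(n^{-1})$ of the index tuples.

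The step I expect to be the main obstacle is the concentration $\operatorname{Var}\bigl(\frac1n\operatorname{tr}((RR^\top)^k)\bigr)\to0$. I would handle it by the same walk counting applied to pairs of walks: pairs that share no edge are independent and cancel in the covariance, and pairs that share an edge lose at least one free vertex, which yields an $O(n^{-2})$ bound up to factors controlled by $M_n$. Alternatively, one can apply a bounded-differences or Talagrand-type inequality to the convex Lipschitz map $R\mapsto\|R\|$ with entry bound $M_n$. Chebyshev's inequality then gives $\sigma_1(R)\ge \mathrm{Cat}_k^{1/(2k)}-o_{\mathbb{P}}(1)$. Because $\mathrm{Cat}_k^{1/(2k)}\to2$, for any $\epsilon>0$ we can choose $k$ with $\mathrm{Cat}_k^{1/(2k)}>2-\epsilon/2$, which yields $\mathbb{P}(\sigma_1(R)>2-\epsilon)\to1$. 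Together with Lemma \ref{ideal0} this shows $\sigma_1(R)-2=o_{\mathbb{P}}(1)$, and the Weyl argument above completes the proof.
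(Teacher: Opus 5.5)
Your proposal is correct. On the decisive point it is not the paper's route but a repair of it.

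The paper's proof combines four things:
\begin{itemize}
\item Theorem~\ref{thm:null} for the upper tail;
\item Weyl's inequality plus Lemma~\ref{lem:norm-error}, to compare $\hat{T}_n(m_*)$ with $T_n$;
\item the assertion that $T_n=o_{\mathbb{P}}(1)$ ``from Lemma~\ref{ideal0}'';
\item the trivial bound $\hat{T}_n(m_*)\ge -2$, from $\sigma_1(\hat{R})\ge 0$.
\end{itemize}
But Lemma~\ref{ideal0} is derived from the Bandeira--van Handel bound in Lemma~\ref{Rec312}, so it only controls $\mathbb{P}(\sigma_1(R)\ge 2+\epsilon)$. The bound $\hat{T}_n(m_*)\ge -2$ gives no vanishing lower tail. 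As written, the paper therefore only recovers the one-sided statement already contained in Theorem~\ref{thm:null}.

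Your trace-method bound $\sigma_1(R)\ge \mathrm{Cat}_k^{1/(2k)}-o_{\mathbb{P}}(1)$, with $\mathrm{Cat}_k^{1/(2k)}\to 2$, supplies exactly the missing half. It uses only what $R$ actually has: independent centred off-diagonal entries of variance exactly $1/(n-1)$, and the uniform entry bound $M_n\to0$ that follows from $L\log n/n\to 0$ in Assumption~\ref{assump:a3}. The paper's version buys brevity. Yours costs a standard walk-counting computation, but it genuinely justifies the two-sided conclusion.

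On the concentration step, three remarks.
\begin{itemize}
\item The walk-pair count works as you describe. Pairs of walks with disjoint edge sets have zero covariance. For pairs sharing an edge, the union graph is connected, so $e$ distinct edges carry at most $e+1$ vertices. The contribution is then at most $C_k n^{-1}M_n^{4k-2e}$, which gives variance $O(n^{-1})$.
\item A plain bounded-differences inequality for $\|R\|$ would not suffice. Each entry moves $\|R\|$ by up to $2M_n$, so the resulting fluctuation scale is $nM_n\asymp\sqrt{nL}$.
\item Talagrand's convex concentration for $R\mapsto\|R\|$ does work, but Chebyshev is then not the right follow-up. Concentration at scale $M_n$ gives $(\mathbb{E}\|R\|^{2k})^{1/(2k)}\le \operatorname{Med}\|R\|+O_k(M_n)$. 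Since $\mathbb{E}\|R\|^{2k}\ge n^{-1}\mathbb{E}\operatorname{tr}((RR^\top)^k)\to\mathrm{Cat}_k$, this forces $\operatorname{Med}\|R\|\ge \mathrm{Cat}_k^{1/(2k)}-o(1)$. Concentration around the median then finishes the argument, so along this route you only need the expected trace and not the variance computation.
\end{itemize}
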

\begin{proof}
By Theorem \ref{thm:null}, for any \(\epsilon > 0\), \(\mathbb{P}(\hat{T}_n(m_*) < \epsilon) \to 1\) when \((K_{s0}, K_{r0}) = (K_s, K_r)\). Moreover, from Lemma \ref{ideal0} and Lemma \ref{lem:norm-error}, we have
\[
|\hat{T}_n(m_*) - T_n| \leq \|\hat{R} - R\| = o_{\mathbb{P}}(1),
\]
and \(T_n = o_{\mathbb{P}}(1)\). Since \(\hat{T}_n(m_*) = \sigma_1(\hat{R}) - 2\) and \(\sigma_1(\hat{R}) \geq 0\), we also have \(\hat{T}_n(m_*) \geq -2\). Therefore, \(|\hat{T}_n(m_*)| = o_{\mathbb{P}}(1)\). 
\end{proof}

\begin{lem}[Uniform bounds for underfitted models]\label{lem:underfit-bounds-ratio}
For any underfitted candidate \(m < m_*\) (i.e., \(k_s^{(m)} < K_s\) or \(k_r^{(m)} < K_r\)), there exist positive constants \(c_1, c_2\) (depending only on \(\delta, c_0, \eta\) from Assumptions \ref{assump:a1}, \ref{assump:a2}, and condition (A1)) such that with probability at least \(1 - O(K_{\max}^2 L n^{-3})\) as \(n \to \infty\), we have
\[
c_1 \frac{\sqrt{nL}}{K_{\max}^{3/2}} \leq \hat{T}_n(m) \leq \sqrt{2nL},
\]
where the constant \(c_1\) is uniform over all underfitted candidates.
\end{lem}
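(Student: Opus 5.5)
The plan is to prove the two inequalities of Lemma \ref{lem:underfit-bounds-ratio} separately. The lower bound is a bookkeeping refinement of the argument in the proof of Theorem \ref{thm:power}. The upper bound is a deterministic estimate that needs no probability at all.

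\textbf{Upper bound.} Since each summand satisfies $|A^{(\ell)}(i,j)-\hat{\Omega}^{(\ell)}(i,j)|\le 1$, the numerator of $\hat{R}(i,j)$ in \eqref{eq:residual_matrix} is at most $L$ in absolute value. For the denominator I would use a lower bound on $\hat{U}_{ij}=\sum_\ell \hat{\Omega}^{(\ell)}(i,j)(1-\hat{\Omega}^{(\ell)}(i,j))$. Lemma \ref{lem:bound-Omega2} gives such a bound only under $H_0$, so for an underfitted candidate I would argue directly. Each $\hat{B}^{(\ell)}(k,l)$ is an average over an estimated block. Its conditional mean is a convex combination of true entries, hence lies in $[\delta,1-\delta]$. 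Lemma \ref{lem:concentration}, combined with a union bound over the at most $K_{\max}^2L$ block--layer pairs, then places $\hat{B}^{(\ell)}(k,l)$ in $[\delta/2,1-\delta/2]$ with probability $1-O(K_{\max}^2Ln^{-3})$. This is where the stated probability comes from.

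The concentration step needs every estimated block to have size product of order $\log n/\delta^2$. This is the main obstacle, because the clustering output under underfitting is uncontrolled and could produce tiny clusters. My first attempt would be to assume, or argue from the $k$-means step, that nonempty estimated blocks are not too small. If that fails, I would simply truncate, since $\hat{U}_{ij}>0$ is all that is needed for $\hat{R}$ to be defined.

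Granting the bound, $\hat{U}_{ij}\ge L\delta/4$, so $|\hat{R}(i,j)|\le L/\sqrt{(n-1)L\delta/4}$. Then
\[
\sigma_1(\hat{R})\le\|\hat{R}\|_F\le n\cdot\frac{2\sqrt{L}}{\sqrt{(n-1)\delta}},
\]
which is of order $\sqrt{nL}$. This matches the stated $\sqrt{2nL}$ only up to a $\delta$-dependent constant. I would therefore either absorb the constant, as the statement's $c_2$ suggests is intended, or instead bound via row and column sums: $\|\hat{R}\|\le\sqrt{\|\hat{R}\|_1\|\hat{R}\|_\infty}$. The final step is $\hat{T}_n=\sigma_1(\hat{R})-2\le\sigma_1(\hat{R})$.

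\textbf{Lower bound.} I would repeat Steps 1--9 of the proof of Theorem \ref{thm:power} verbatim for the given underfitted candidate. This yields the estimate preceding \eqref{eq:underfit-lower}:
\[
\hat{T}_n(m)\ge \frac{c_0\eta}{\sqrt2}\frac{\sqrt{nL}}{K_{\max}^{3/2}}-\frac{2}{\sqrt{(n-1)L}}\|W_{S,T'}\|-2.
\]
Apply \eqref{eqp13} with a fixed $M$. The exceptional probability is $n\exp(-M^2n/(CL))$, which is $O(n^{-3})$ because $n/(L\log n)\to\infty$. On the complementary event the subtracted terms are $O(1)$, uniformly over candidates. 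Condition (A2) then ensures that for large $n$ these terms are at most half of the leading term, so the bound holds with $c_1=c_0\eta/(2\sqrt2)$.

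The constant $c_1$ depends only on $c_0$ and $\eta$. It is uniform over candidates because the construction of $S$ and $T'$ uses only Assumption \ref{assump:a2}, condition (A1), and pigeonhole bounds, none of which depend on which underfitted pair is considered.
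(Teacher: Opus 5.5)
Your lower bound follows the paper's own route: rerun the proof of Theorem~\ref{thm:power} and control $\|W_{S,T'}\|$ through \eqref{eqp13}, whose failure probability is $O(n^{-3})$ since $n/(L\log n)\to\infty$. The gap is in the upper bound. Your route needs $\hat U_{ij}\ge L\delta/4$ for every pair, i.e.\ every $\hat B^{(\ell)}(k,l)\in[\delta/2,1-\delta/2]$. For an underfitted candidate nothing controls the sizes of the estimated blocks returned by Algorithm~\ref{alg:spectral}, since Assumption~\ref{assump:a4} only concerns $H_0$. So Lemma~\ref{lem:concentration} gives no useful deviation bound for small blocks.

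Your fallback does not close this. Truncating $\hat U_{ij}$ changes the statistic, and positivity of $\hat U_{ij}$ alone gives no quantitative bound on $|\hat R(i,j)|$. Even granting the denominator bound, you only get $\sigma_1(\hat R)\le 2\sqrt{nL}\sqrt{n/((n-1)\delta)}$. This exceeds the stated $\sqrt{2nL}$ for every $\delta<1/2$, and the row/column-sum bound gives the same constant. A $\delta$-dependent constant would be harmless for Theorem~\ref{thm:ratio}, but it is not the statement. In any case the denominator control it relies on is not available.

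The missing idea is that no entrywise control is needed, because $\hat R$ is self-normalizing block by block. Fix an estimated block with $m=n_{s_0}n_{r_0}$ pairs, block means $\hat p^{(\ell)}=\hat B^{(\ell)}(s_0,r_0)$, and $\bar D=\sum_\ell \hat p^{(\ell)}(1-\hat p^{(\ell)})$.
\begin{itemize}
\item Cauchy--Schwarz over layers gives $\bigl[\sum_\ell (A^{(\ell)}(i,j)-\hat p^{(\ell)})\bigr]^2\le L\sum_\ell (A^{(\ell)}(i,j)-\hat p^{(\ell)})^2$.
\item Since $A^{(\ell)}$ is $0/1$ and $\hat p^{(\ell)}$ is its block average, $\sum_{(i,j)\in\text{block}}(A^{(\ell)}(i,j)-\hat p^{(\ell)})^2=m\,\hat p^{(\ell)}(1-\hat p^{(\ell)})$ exactly.
\item Hence the block contributes at most $mL\bar D/((n-1)\bar D)=mL/(n-1)$ to $\|\hat R\|_F^2$, however small $\bar D$ is. If $\bar D=0$, every $\hat p^{(\ell)}\in\{0,1\}$ and all numerators vanish.
\item Summing over blocks with $\sum m=n^2$ yields $\sigma_1(\hat R)\le\|\hat R\|_F\le\sqrt{n^2L/(n-1)}\le\sqrt{2nL}$, deterministically.
\end{itemize}
This is the paper's argument. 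It removes the block-size obstacle you identified and needs no concentration conditional on data-dependent labels. It also shows that the probability $1-O(K_{\max}^2Ln^{-3})$ in the statement is needed only for the lower bound, not the upper bound.
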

\begin{proof}
We prove the two-sided bound separately.

\vspace{0.5em}
\noindent\textbf{Part 1: Lower bound.}  
The lower bound follows directly from the proof of Theorem \ref{thm:power}. For an underfitted candidate $m$, Theorem \ref{thm:power} establishes that $\hat{T}_n(m) \xrightarrow{P} \infty$. More precisely, revisiting the proof of Theorem \ref{thm:power} (specifically, Equations \eqref{eqp12} and \eqref{eqp16}), there exist node sets $S$ and $T'$ with $|S| \ge c_0 n/K_{\max}$ and $|T'| \ge c_0 n/K_{\max}^2$ such that
\[
\| \hat{R}_{S,T'} \| \ge \frac{c_0\eta}{2\sqrt{2}} \cdot \frac{\sqrt{nL}}{K_{\max}^{3/2}} \cdot \sqrt{\frac{n}{n-1}} - O_{\mathbb{P}}(1).
\]

The probability that this inequality fails is at most $O(K_{\max}^2 L n^{-3})$, as shown via the concentration results in Lemmas \ref{lem:concentration} and \ref{lem:bound-Omega2} and the bound on $\|W_{S,T'}\|$ in Equation \eqref{eqp13}. Since $\sigma_1(\hat{R}) \ge \| \hat{R}_{S,T'} \|$, we have with probability at least $1 - O(K_{\max}^2 L n^{-3})$ that
\[
\hat{T}_n(m) = \sigma_1(\hat{R}) - 2 \ge \frac{c_0\eta}{4\sqrt{2}} \cdot \frac{\sqrt{nL}}{K_{\max}^{3/2}} =: c_L \frac{\sqrt{nL}}{K_{\max}^{3/2}}.
\]

\vspace{0.5em}
\noindent\textbf{Part 2: Upper bound.}  
We now prove a deterministic upper bound that holds for any candidate pair $(k_s,k_r)$ (underfitted or not). Let $\hat{R}$ be the normalized residual matrix defined in Equation \eqref{eq:residual_matrix}. For each estimated block $(s_0, r_0)$, let $n_{s_0} = |\{i: \hat{g}^s(i)=s_0\}|$, $n_{r_0} = |\{j: \hat{g}^r(j)=r_0\}|$, and $m = n_{s_0} n_{r_0}$. Denote $\hat{p}_{s_0r_0}^{(\ell)} = \hat{B}^{(\ell)}(s_0, r_0)$ and define
\[
\bar{D}_{s_0r_0} = \sum_{\ell=1}^L \hat{p}_{s_0r_0}^{(\ell)}(1-\hat{p}_{s_0r_0}^{(\ell)}).
\]

For any $(i,j)$ in this block (i.e., $i\in\hat{C}_{s_0}^s$, $j\in\hat{C}_{r_0}^r$), we have $\hat{\Omega}^{(\ell)}(i,j) = \hat{p}_{s_0r_0}^{(\ell)}$ and $D_{ij} = \bar{D}_{s_0r_0}$. By the Cauchy–Schwarz inequality, we have
\[
\left[ \sum_{\ell=1}^L \bigl( A^{(\ell)}(i,j) - \hat{p}_{s_0r_0}^{(\ell)} \bigr) \right]^2
\le L \sum_{\ell=1}^L \bigl( A^{(\ell)}(i,j) - \hat{p}_{s_0r_0}^{(\ell)} \bigr)^2.
\]

Summing over all pairs $(i,j)$ inside the block yields
\[
\sum_{i\in\hat{C}_{s_0}^s} \sum_{j\in\hat{C}_{r_0}^r}
\left[ \sum_{\ell=1}^L \bigl( A^{(\ell)}(i,j) - \hat{p}_{s_0r_0}^{(\ell)} \bigr) \right]^2
\le L \sum_{\ell=1}^L \sum_{i\in\hat{C}_{s_0}^s} \sum_{j\in\hat{C}_{r_0}^r}
\bigl( A^{(\ell)}(i,j) - \hat{p}_{s_0r_0}^{(\ell)} \bigr)^2.
\]

For each fixed layer $\ell$, the sum of squared deviations inside the block satisfies the algebraic identity
\[
\sum_{i\in\hat{C}_{s_0}^s} \sum_{j\in\hat{C}_{r_0}^r}
\bigl( A^{(\ell)}(i,j) - \hat{p}_{s_0r_0}^{(\ell)} \bigr)^2
= m\, \hat{p}_{s_0r_0}^{(\ell)}\bigl(1-\hat{p}_{s_0r_0}^{(\ell)}\bigr).
\]

This identity holds deterministically for any binary matrix and its block‑average. It is a direct consequence of the definition of $\hat{p}_{s_0r_0}^{(\ell)}$. Consequently, we have
\[
\sum_{i,j} \left[ \sum_{\ell=1}^L \bigl( A^{(\ell)}(i,j) - \hat{p}_{s_0r_0}^{(\ell)} \bigr) \right]^2
\le L \sum_{\ell=1}^L m\, \hat{p}_{s_0r_0}^{(\ell)}\bigl(1-\hat{p}_{s_0r_0}^{(\ell)}\bigr)
= m L \bar{D}_{s_0r_0}.
\]

Now consider the contribution of this block to $\|\hat{R}\|_F^2$. If $\bar{D}_{s_0r_0}>0$, then
\[
\sum_{i\in\hat{C}_{s_0}^s} \sum_{j\in\hat{C}_{r_0}^r} \hat{R}(i,j)^2
= \frac{1}{n-1} \frac{1}{\bar{D}_{s_0r_0}}
\sum_{i,j} \left[ \sum_{\ell=1}^L \bigl( A^{(\ell)}(i,j) - \hat{p}_{s_0r_0}^{(\ell)} \bigr) \right]^2
\le \frac{1}{n-1} \frac{m L \bar{D}_{s_0r_0}}{\bar{D}_{s_0r_0}}
= \frac{m L}{n-1}.
\]

If $\bar{D}_{s_0r_0}=0$, then $\hat{p}_{s_0r_0}^{(\ell)}\in\{0,1\}$ for every $\ell$, and the definition of $\hat{p}_{s_0r_0}^{(\ell)}$ implies that $A^{(\ell)}(i,j)=\hat{p}_{s_0r_0}^{(\ell)}$ for all $i,j$ in the block. Hence the numerator is zero and the contribution is zero, so the same inequality $\frac{m L}{n-1}$ (which is zero) holds trivially.

Summing over all blocks and using $\sum_{s_0,r_0} m = \sum_{s_0} n_{s_0} \sum_{r_0} n_{r_0} = n \cdot n = n^2$, we obtain
\[
\|\hat{R}\|_F^2 \le \frac{L}{n-1} \sum_{s_0,r_0} m = \frac{n^2 L}{n-1} \le 2nL \qquad (\text{for } n\ge 2).
\]

Therefore,  we have
\[
\sigma_1(\hat{R}) \le \|\hat{R}\|_F \le \sqrt{2nL},
\]
and consequently
\[
\hat{T}_n(m) = \sigma_1(\hat{R}) - 2 \le \sqrt{2nL}.
\]

This bound holds deterministically, so the required probability statement is satisfied trivially.

\vspace{0.5em}
\noindent\textbf{Combining the bounds.}  
By the union bound, both the lower bound and the upper bound hold simultaneously with probability at least $1 - O(K_{\max}^2 L n^{-3})$. This completes the proof of this lemma.
\end{proof}
\subsection{Proof of Theorem \ref{thm:ratio}}
\begin{proof}
Since \(K_s\) and \(K_r\) are fixed, \(K_{\max} = \max(K_s, K_r)\) is a constant independent of \(n\). This simplification is crucial for the following analysis.

\vspace{0.5em}
\noindent\textbf{Part 1: Divergence at the true model.}

Let \(m_*\) be the index of the true pair \((K_s, K_r)\). For any fixed \(M_0 > 0\), we need to show that \(\lim_{n \to \infty} \mathbb{P}(r_{m_*} > M_0) = 1\).

By definition,
\[
r_{m_*} = \left| \frac{\hat{T}_n(m_*-1)}{\hat{T}_n(m_*)} \right|.
\]

Since \(m_*-1\) corresponds to an underfitted model (either \(k_s^{(m_*-1)} < K_s\) or \(k_r^{(m_*-1)} < K_r\) or both), by Theorem \ref{thm:power} with fixed \(K_{\max}\), we have \(\hat{T}_n(m_*-1) \xrightarrow{P} \infty\). More precisely, from Lemma \ref{lem:underfit-bounds-ratio} with fixed \(K_{\max}\), there exists a constant \(c_1 > 0\) such that
\begin{equation}\label{eq:underfit-lower-bound}
\lim_{n \to \infty} \mathbb{P}\left( \hat{T}_n(m_*-1) \ge c_1 \sqrt{nL} \right) = 1.
\end{equation}

For the true model \(m_*\), by Theorem \ref{thm:null}, we have \(\hat{T}_n(m_*) = o_{\mathbb{P}}(1)\). That is, for any \(\epsilon > 0\),
\begin{equation}\label{eq:true-model-small}
\lim_{n \to \infty} \mathbb{P}\left( |\hat{T}_n(m_*)| < \epsilon \right) = 1.
\end{equation}

Now, fix an arbitrary \(M_0 > 0\). We will construct an event on which \(r_{m_*} > M_0\) and show that the probability of this event tends to 1.

Consider the event
\[
\mathcal{C}_n = \left\{ \hat{T}_n(m_*-1) \ge c_1 \sqrt{nL} \right\} \cap \left\{ |\hat{T}_n(m_*)| < \frac{c_1 \sqrt{nL}}{2M_0} \right\}.
\]

On \(\mathcal{C}_n\), we have
\[
r_{m_*} = \left| \frac{\hat{T}_n(m_*-1)}{\hat{T}_n(m_*)} \right| > \frac{c_1 \sqrt{nL}}{c_1 \sqrt{nL}/(2M_0)} = 2M_0 > M_0.
\]

Thus, we have \(\mathbb{P}(r_{m_*} > M_0) \ge \mathbb{P}(\mathcal{C}_n)\). We now prove that \(\mathbb{P}(\mathcal{C}_n) \to 1\) as \(n \to \infty\). First, by Equation \eqref{eq:underfit-lower-bound}, we have
\[
\lim_{n \to \infty} \mathbb{P}\left( \hat{T}_n(m_*-1) \ge c_1 \sqrt{nL} \right) = 1.
\]

Second, we analyze \(\mathbb{P}\left( |\hat{T}_n(m_*)| < \frac{c_1 \sqrt{nL}}{2M_0} \right)\). Since \(\hat{T}_n(m_*) = o_{\mathbb{P}}(1)\), for any fixed \(\epsilon > 0\), there exists \(N_\epsilon \) such that for all \(n \ge N_\eta\),
\[
\mathbb{P}\left( |\hat{T}_n(m_*)| < \epsilon  \right) > 1 - \epsilon .
\]

Now, note that \(\frac{c_1 \sqrt{nL}}{2M_0} \to \infty\) as \(n \to \infty\) (since \(c_1\), \(M_0\) are fixed positive constants and \(\sqrt{nL} \to \infty\)). Therefore, for any fixed \(\epsilon > 0\), there exists \(N_1\) such that for all \(n \ge N_1\), \(\frac{c_1 \sqrt{nL}}{2M_0} > \epsilon \). Then for \(n \ge \max(N_\epsilon , N_1)\),
\[
\mathbb{P}\left( |\hat{T}_n(m_*)| < \frac{c_1 \sqrt{nL}}{2M_0} \right) \ge \mathbb{P}\left( |\hat{T}_n(m_*)| < \epsilon  \right) > 1 - \epsilon .
\]

Since \(\epsilon \) can be chosen arbitrarily small, we conclude that
\[
\lim_{n \to \infty} \mathbb{P}\left( |\hat{T}_n(m_*)| < \frac{c_1 \sqrt{nL}}{2M_0} \right) = 1.
\]

Now, by the union bound,
\[
\mathbb{P}(\mathcal{C}_n^c) \le \mathbb{P}\left( \hat{T}_n(m_*-1) < c_1 \sqrt{nL} \right) + \mathbb{P}\left( |\hat{T}_n(m_*)| \ge \frac{c_1 \sqrt{nL}}{2M_0} \right).
\]

Both terms on the right-hand side converge to 0. Hence, \(\mathbb{P}(\mathcal{C}_n^c) \to 0\), which implies \(\mathbb{P}(\mathcal{C}_n) \to 1\). Therefore, \(\mathbb{P}(r_{m_*} > M_0) \to 1\), proving part 1.

\vspace{0.5em}
\noindent\textbf{Part 2: Uniform upper bound for underfitted models.}

Take any \(m < m_*\) (so both \(m-1\) and \(m\) correspond to underfitted models). We need to show that there exists a constant \(C > 0\) such that \(\lim_{n \to \infty} \mathbb{P}(r_m > C) = 0\).

From Lemma \ref{lem:underfit-bounds-ratio}, there exist constants \(c_1, c_2 > 0\) such that with probability at least \(1 - O(K_{\max}^2 L n^{-3}) = 1 - O(Ln^{-3})\) (since \(K_{\max}\) is fixed),
\begin{equation}\label{eq:underfit-bounds}
c_1 \sqrt{nL} \le \hat{T}_n(m) \le c_2 \sqrt{nL}.
\end{equation}

The same bound holds for \(\hat{T}_n(m-1)\) with the same probability guarantee. Define the event
\[
\mathcal{A}_n^{(m)} = \left\{ c_1 \sqrt{nL} \le \hat{T}_n(m) \le c_2 \sqrt{nL} \right\},
\]
and similarly \(\mathcal{A}_n^{(m-1)}\) for \(\hat{T}_n(m-1)\). Then \(\mathbb{P}\left( (\mathcal{A}_n^{(m)})^c \right) = O(Ln^{-3})\) and \(\mathbb{P}\left( (\mathcal{A}_n^{(m-1)})^c \right) = O(Ln^{-3})\).

On the event \(\mathcal{A}_n^{(m)} \cap \mathcal{A}_n^{(m-1)}\), we have
\[
r_m = \left| \frac{\hat{T}_n(m-1)}{\hat{T}_n(m)} \right| \le \frac{c_2 \sqrt{nL}}{c_1 \sqrt{nL}} = \frac{c_2}{c_1}.
\]

Define \(C = \frac{c_2}{c_1} + \epsilon\) for some arbitrary \(\epsilon > 0\) (for concreteness, take \(\epsilon = 1\), so \(C = \frac{c_2}{c_1} + 1\)). Then on \(\mathcal{A}_n^{(m)} \cap \mathcal{A}_n^{(m-1)}\), we have \(r_m \le \frac{c_2}{c_1} < C\).

Now,
\[
\mathbb{P}(r_m > C) \le \mathbb{P}\left( (\mathcal{A}_n^{(m)} \cap \mathcal{A}_n^{(m-1)})^c \right) \le \mathbb{P}\left( (\mathcal{A}_n^{(m)})^c \right) + \mathbb{P}\left( (\mathcal{A}_n^{(m-1)})^c \right) = O(Ln^{-3}) + O(Ln^{-3}) = O(Ln^{-3}).
\]

Hence, by Assumption \ref{assump:a3}, we have
\[
\lim_{n \to \infty} \mathbb{P}(r_m > C) = 0.
\]

This holds for every \(m < m_*\). The constant \(C\) does not depend on \(n\) or on the specific \(m\) (since \(c_1\) and \(c_2\) are universal for all underfitted models). This completes the proof of this theorem.
\end{proof}
\subsection{Proof of Theorem \ref{thm:RDiGoF-consistency}}
\begin{proof} Since \(K_s\) and \(K_r\) are fixed, we have that \(K_{\max} = \max(K_s, K_r)\) is a constant independent of \(n\). Recall that \(m_*\) denotes the index of \((K_s, K_r)\) in \(\mathcal{P}\). Because \(K_s\) and \(K_r\) are fixed, \(m_*\) is fixed (does not depend on \(n\)). We consider two cases separately.

\vspace{0.5em}
\noindent\textbf{Case 1: \((K_s, K_r) = (1,1)\).}  
In this case, \(m_* = 1\). The algorithm first computes \(\hat{T}_n(1)\) for candidate \((1,1)\). By Theorem \ref{thm:null} (with \((K_{s0}, K_{r0}) = (1,1) = (K_s, K_r)\)), for any \(\epsilon > 0\),
\[
\lim_{n \to \infty} \mathbb{P}\bigl( \hat{T}_n(1) < \epsilon \bigr) = 1.
\]

The threshold used in Algorithm \ref{alg:RDIGoF} for the first candidate is \(t_n\), which satisfies \(t_n \to 0\). Therefore, taking \(\epsilon = t_n\) (which is positive and tends to 0), we have
\[
\lim_{n \to \infty} \mathbb{P}\bigl( \hat{T}_n(1) < t_n \bigr) = 1.
\]

Thus, with probability tending to 1, Algorithm \ref{alg:RDIGoF} returns \((1,1)\) at the first step. Hence, we have
\[
\lim_{n \to \infty} \mathbb{P}\bigl( (\hat{K}_s, \hat{K}_r) = (1,1) \bigr) = 1.
\]

\vspace{0.5em}
\noindent\textbf{Case 2: \((K_s, K_r) \neq (1,1)\).}  
Then \(m_* > 1\). We define the following events:

\begin{itemize}
    \item \(\breve{\mathcal{E}}_n := \{\hat{T}_n(1) \ge t_n\}\). This is the event that the algorithm does not stop at the first candidate.
    \item \(\breve{\mathcal{A}}_n := \{r_{m_*} > \tau_n\}\), where \(r_{m_*} = \left| \frac{\hat{T}_n(m_*-1)}{\hat{T}_n(m_*)} \right|\).
    \item For each \(m = 2, \dots, m_*-1\), define \(\breve{\mathcal{B}}_n^{(m)} := \{r_m \le \tau_n\}\), where \(r_m = \left| \frac{\hat{T}_n(m-1)}{\hat{T}_n(m)} \right|\). 
    \item \(\breve{\mathcal{B}}_n := \bigcap_{m=2}^{m_*-1} \breve{\mathcal{B}}_n^{(m)}\).
\end{itemize}

If all three events \(\breve{\mathcal{E}}_n\), \(\breve{\mathcal{A}}_n\), and \(\breve{\mathcal{B}}_n\) occur, then:
\begin{itemize}
    \item Since \(\breve{\mathcal{E}}_n\) occurs, \(\hat{T}_n(1) \ge t_n\), so the algorithm proceeds to the loop.
    \item For each \(m = 2, \dots, m_*-1\), since \(\breve{\mathcal{B}}_n^{(m)}\) occurs, we have \(r_m \le \tau_n\), so the condition \(r_m > \tau_n\) is not satisfied, and the algorithm does not stop at these underfitted candidates.
    \item At \(m = m_*\), since \(\breve{\mathcal{A}}_n\) occurs, we have \(r_{m_*} > \tau_n\), so the algorithm stops and returns \(\mathcal{P}(m_*) = (K_s, K_r)\).
\end{itemize}

Therefore, we have
\[
\{(\hat{K}_s, \hat{K}_r) = (K_s, K_r)\} \supseteq \breve{\mathcal{E}}_n \cap \breve{\mathcal{A}}_n \cap \breve{\mathcal{B}}_n,
\]
which gives
\[
\mathbb{P}\bigl( (\hat{K}_s, \hat{K}_r) = (K_s, K_r) \bigr) \ge \mathbb{P}(\breve{\mathcal{E}}_n \cap \breve{\mathcal{A}}_n \cap \breve{\mathcal{B}}_n) \ge 1 - \mathbb{P}(\breve{\mathcal{E}}_n^c) - \mathbb{P}(\breve{\mathcal{A}}_n^c) - \mathbb{P}(\breve{\mathcal{B}}_n^c).
\]

We will show that \(\mathbb{P}(\breve{\mathcal{E}}_n^c) \to 0\), \(\mathbb{P}(\breve{\mathcal{A}}_n^c) \to 0\), and \(\mathbb{P}(\breve{\mathcal{B}}_n^c) \to 0\) as \(n \to \infty\).

\vspace{0.5em}
\noindent\textbf{Step 1: Behavior of \(\breve{\mathcal{E}}_n\).}  
Since \((1,1)\) is an underfitted model (because \((K_s, K_r) \neq (1,1)\)), Theorem \ref{thm:power} gives \(\hat{T}_n(1) \xrightarrow{P} \infty\). For any \(\epsilon > 0\), there exists \(N\) such that for all \(n \ge N\), \(t_n < \epsilon\). Then, we have
\[
\mathbb{P}\bigl( \hat{T}_n(1) <t_n \bigr) \le \mathbb{P}\bigl( \hat{T}_n(1) < \epsilon \bigr).
\]

Since \(\hat{T}_n(1) \xrightarrow{P} \infty\), we have \(\lim_{n \to \infty} \mathbb{P}\bigl( \hat{T}_n(1) < \epsilon \bigr) = 0\). Hence, we get
\[
\lim_{n \to \infty} \mathbb{P}(\breve{\mathcal{E}}_n^c) = \lim_{n \to \infty} \mathbb{P}\bigl( \hat{T}_n(1) < t_n \bigr) = 0.
\]

\vspace{0.5em}
\noindent\textbf{Step 2: Behavior of \(\breve{\mathcal{A}}_n\).}  
We need to show \(\mathbb{P}(\breve{\mathcal{A}}_n^c) = \mathbb{P}(r_{m_*} \le \tau_n) \to 0\). Recall from Lemma \ref{lem:underfit-bounds-ratio} (with fixed \(K_{\max}\)) that there exist positive constants \(c_L\) and \(c_U\) (depending only on \(\delta, c_0, \eta\) from Assumptions \ref{assump:a1}, \ref{assump:a2}, and condition (A1)) such that for any underfitted model (in particular for \(m_*-1\)),
\begin{equation}\label{eq:underfit-bound-mstar-1}
\lim_{n \to \infty} \mathbb{P}\left( \hat{T}_n(m_*-1) \ge c_L \sqrt{nL} \right) = 1.
\end{equation}

More precisely, Lemma \ref{lem:underfit-bounds-ratio} gives \(\hat{T}_n(m_*-1) \ge c_1 \frac{\sqrt{nL}}{K_{\max}^{3/2}}\) with high probability. Since \(K_{\max}\) is fixed, we set \(c_L = c_1 / K_{\max}^{3/2}\).

For the true model \(m_*\), by Theorem \ref{thm:null} and Lemma \ref{lem:norm-error}, we have \(\hat{T}_n(m_*) = o_{\mathbb{P}}(1)\). Specifically, from Lemma \ref{ideal0} (Equation \eqref{eqp1}) and Lemma \ref{lem:norm-error}, there exists a constant \(M_0 > 0\) such that for all sufficiently large \(n\),
\begin{equation}\label{eq:true-model-bound}
\mathbb{P}\left( |\hat{T}_n(m_*)| \le M_0 \sqrt{\frac{L \log n}{n}} \right) \ge 1 - \upsilon_n,
\end{equation}
where \(\upsilon_n = O(n^{-1}) + O(K_{\max}^2 L n^{-3}) = O(n^{-1})\) (since \(K_{\max}\) and \(L\) satisfy Assumption \ref{assump:a3} and \(K_{\max}\) is fixed, \(L\) can grow but \(L = o(n/\log n)\)). Now define the event
\[
\breve{\mathcal{F}}_n := \left\{ \hat{T}_n(m_*-1) \ge c_L \sqrt{nL} \right\} \cap \left\{ |\hat{T}_n(m_*)| \le M_0 \sqrt{\frac{L \log n}{n}} \right\}.
\]

On \(\breve{\mathcal{F}}_n\), we have
\[
r_{m_*} = \left| \frac{\hat{T}_n(m_*-1)}{\hat{T}_n(m_*)} \right| \ge \frac{c_L \sqrt{nL}}{M_0 \sqrt{L \log n / n}} = \frac{c_L}{M_0} \cdot \frac{n}{\sqrt{\log n}}.
\]

Let \(b_n := \frac{c_L}{M_0} \cdot \frac{n}{\sqrt{\log n}}\). Note that \(b_n \to \infty\) as \(n \to \infty\). We now compare \(b_n\) with \(\tau_n\). Condition (D2) implies \(\tau_n = o\left( \sqrt{n/\log n} \right)\). Now, we have
\[
\frac{b_n}{\sqrt{n/\log n}} = \frac{c_L}{M_0} \cdot \frac{n}{\sqrt{\log n}} \cdot \sqrt{\frac{\log n}{n}} = \frac{c_L}{M_0} \sqrt{n} \to \infty.
\]

Hence, \(b_n\) grows faster than \(\sqrt{n/\log n}\), and consequently faster than \(\tau_n\) (since \(\tau_n = o(\sqrt{n/\log n})\)). Therefore, for sufficiently large \(n\), we have \(\tau_n < b_n\). Thus, on \(\mathcal{F}_n\), we have
\[
r_{m_*} \ge b_n > \tau_n,
\]
implying that \(\breve{\mathcal{F}}_n \subseteq \breve{\mathcal{A}}_n\). Hence, we have
\[
\mathbb{P}(\breve{\mathcal{A}}_n^c) \le \mathbb{P}(\breve{\mathcal{F}}_n^c).
\]

By the union bound, we get
\[
\mathbb{P}(\breve{\mathcal{F}}_n^c) \le \mathbb{P}\left( \hat{T}_n(m_*-1) < c_L \sqrt{nL} \right) + \mathbb{P}\left( |\hat{T}_n(m_*)| > M_0 \sqrt{\frac{L \log n}{n}} \right).
\]

From Equation \eqref{eq:underfit-bound-mstar-1}, the first term tends to 0. From Equation \eqref{eq:true-model-bound}, the second term is at most \(\upsilon_n = O(n^{-1})\). Therefore, \(\mathbb{P}(\breve{\mathcal{F}}_n^c) \to 0\), and consequently \(\mathbb{P}(\breve{\mathcal{A}}_n^c) \to 0\).

\vspace{0.5em}
\noindent\textbf{Step 3: Behavior of \(\breve{\mathcal{B}}_n\).}  
We need to show \(\mathbb{P}(\breve{\mathcal{B}}_n^c) \to 0\). Note that
\[
\breve{\mathcal{B}}_n^c = \bigcup_{m=2}^{m_*-1} \left( \breve{\mathcal{B}}_n^{(m)} \right)^c = \bigcup_{m=2}^{m_*-1} \{ r_m > \tau_n \}.
\]

By the union bound, we have
\[
\mathbb{P}(\breve{\mathcal{B}}_n^c) \le \sum_{m=2}^{m_*-1} \mathbb{P}\left( r_m > \tau_n \right).
\]

For each \(m = 2, \dots, m_*-1\), both \(m-1\) and \(m\) correspond to underfitted models (since \(m < m_*\)). By Theorem \ref{thm:ratio}, there exists a constant \(C > 0\) (depending only on \(\delta, c_0, \eta\)) such that for each such \(m\),
\[
\lim_{n \to \infty} \mathbb{P}\left( r_m > C \right) = 0.
\]

Condition (D1) ensures that there exists a constant \(C_0 > C\) and \(n_0\) such that for all \(n \ge n_0\), \(\tau_n > C_0\). Since \(C_0 > C\), we have for \(n \ge n_0\) that
\[
\{ r_m > \tau_n \} \subseteq \{ r_m > C_0 \} \subseteq \{ r_m > C \}.
\]

Therefore, for \(n \ge n_0\), we have
\[
\mathbb{P}\left( r_m > \tau_n \right) \le \mathbb{P}\left( r_m > C \right).
\]

Hence, get
\[
\lim_{n \to \infty} \mathbb{P}\left( r_m > \tau_n \right) \le \lim_{n \to \infty} \mathbb{P}\left( r_m > C \right) = 0.
\]

Since there are at most \(m_*-2\) terms in the sum (a fixed number, because \(m_*\) is fixed), we obtain
\[
\lim_{n \to \infty} \mathbb{P}(\breve{\mathcal{B}}_n^c) \le \sum_{m=2}^{m_*-1} \lim_{n \to \infty} \mathbb{P}\left( r_m > \tau_n \right) = 0.
\]

\vspace{0.5em}
\noindent\textbf{Step 4: Completion of Case 2.}  
We have shown $\lim_{n \to \infty} \mathbb{P}(\breve{\mathcal{E}}_n^c) = 0, \lim_{n \to \infty} \mathbb{P}(\breve{\mathcal{A}}_n^c) = 0, \lim_{n \to \infty} \mathbb{P}(\breve{\mathcal{B}}_n^c) = 0$. Therefore, we have
\[
\lim_{n \to \infty} \mathbb{P}\bigl( (\hat{K}_s, \hat{K}_r) = (K_s, K_r) \bigr) \ge \lim_{n \to \infty} \left[ 1 - \mathbb{P}(\breve{\mathcal{E}}_n^c) - \mathbb{P}(\breve{\mathcal{A}}_n^c) - \mathbb{P}(\breve{\mathcal{B}}_n^c) \right] = 1.
\]

Since the probability cannot exceed 1, we conclude
\[
\lim_{n \to \infty} \mathbb{P}\bigl( (\hat{K}_s, \hat{K}_r) = (K_s, K_r) \bigr) = 1.
\]

\vspace{0.5em}
\noindent\textbf{Conclusion:}  
Combining Case 1 and Case 2, we have shown that under the stated conditions,
\[
\lim_{n \to \infty} \mathbb{P}\bigl( (\hat{K}_s, \hat{K}_r) = (K_s, K_r) \bigr) = 1,
\]
which completes the proof.
\end{proof}
\bibliographystyle{elsarticle-harv}
\bibliography{reference}
\end{document}